\crefname{hypothesis}{Hypothesis}{Hypotheses}
\title{On the best convergence rates of lightning plus polynomial approximations\thanks{Submitted to the editors DATE.
\funding{This work was funded by National Science Foundation of China (No. 12271528).}}}
\author{Shuhuang Xiang\thanks{School of Mathematics and Statistics, Central South University, Changsha 410083, Hunan, People's Republic of China
  (\email{xiangsh@csu.edu.cn}).}
\and Shunfeng Yang\thanks{School of Mathematics and Statistics, Central South University, Changsha 410083, Hunan, People's Republic of China
  (\email{yangshunfeng@163.com}), corresponding author.}
  \and Yanghao Wu\thanks{School of Mathematics and Statistics, Central South University, Changsha 410083, Hunan, People's Republic of China (\email{wyanghao96@163.com}).
  }}
\begin{document}

\maketitle

\begin{abstract}
Building on introducing  exponentially clustered poles, Trefethen and his collaborators
introduced lightning algorithms for approximating functions of singularities. These schemes may achieve root-exponential convergence rates. In particular, based on a specific choice of the  
parameter of the tapered exponentially clustered poles,
the lightning approximation with either a low-degree polynomial basis
may achieve the optimal convergence rate simply  as  the best rational approximation
for prototype $x^\alpha$ on $[0,1]$, which was illustrated through delicate numerical experiments and conjectured in [SIAM J. Numer. Anal., 61:2580-2600, 2023].
By utilizing Poisson's summation formula and results akin to Paley-Wiener Theorem,
we rigorously show that  all these schemes with  a low-degree polynomial basis achieve  root-exponential convergence rates with exact orders in approximating $x^\alpha$ for arbitrary clustered parameters theoretically, and provide the best choices of the parameter to achieve the fastest convergence rate for each type of clustered poles, from which the conjecture
is confirmed as a special case.
 Ample numerical evidences
demonstrate the optimality and sharpness of the estimates.
\end{abstract}

\begin{keywords}
  lightning plus polynomial, rational function, convergence rate, singularity, uniform exponentially clustered poles, tapered exponentially clustered poles, Poisson's summation formula
\end{keywords}

\begin{AMS}
  41A20, 65E05, 65D15, 30C10
\end{AMS}

\section{Introduction}
\label{sec:Int}
In recent years, there has been a significant amount of research dedicated to the approximation of functions with
branching singularities on the boundary. Efficient and powerful lightning schemes have been developed via rational functions \cite{Brubeck2022,GopTre2019,Gopal2019,Herremans2023,Nakatsukasa2021,Trefethen2021} by applying exponential clustered poles.

Trefethen, Nakatsukasa and Weideman \cite{Trefethen2021} introduced two pole distribution models on the interval $[-C,0]$ with $C$ a positive constant. The first is the uniform exponential clustering of $N$ poles
\begin{equation*}
q_j =-C\exp\big(-\pi j/\sqrt{N}\big),\quad 0\leq j\leq N-1,
\end{equation*}
while the second is the tapered exponential clustering of the poles
\begin{equation}\label{eq:1tapered}
p_j =-C\exp\left(-\sqrt{2}\pi\big(\sqrt{N}-\sqrt{j}\big)/\sqrt{\alpha}\right),\quad 1\le j\le N.
\end{equation}
The accuracy of the lightning methods using these two exponentially clustered poles for  approximating the prototype function $x^\alpha$  with $0<\alpha<1$ on $[0,1]$  are $\mathcal{O}(e^{-\pi\sqrt{\alpha N}})$
and $\mathcal{O}(e^{-\pi\sqrt{2\alpha N}})$, respectively.

It is well known from  Stahl \cite{Stahl2003} that the best rational approximant $R_{N}(x)$ to $x^\alpha$  converges at a root-exponential rate
\begin{align*}
  \lim_{N \to \infty} e^{2\pi \sqrt{\alpha N}} \max_{x\in [0,1]}\big|x^\alpha-R_N(x)\big| = 4^{1+\alpha}|\sin(\alpha\pi)|,
\end{align*}
or equivalently for $|x|^\alpha$ on $[-1,1]$
\begin{align*}
  \lim_{N \to \infty} e^{\pi \sqrt{\alpha N}}  \max_{x\in [-1,1]}\big||x|^\alpha-R_N(x)\big|
  = 4^{1+\alpha/2} \left|\sin\left(\frac{\alpha\pi}{2}\right)\right|
\end{align*}
for each $\alpha>0$.

To achieve the minimax convergence rate $\mathcal{O}(e^{-2\pi\sqrt{\alpha N}})$, Herremans, Huybrechs and Trefethen \cite{Herremans2023} introduced a lightning $+$ polynomial approximation (LP)
\begin{equation}\label{eq:rat}
x^\alpha\approx r_N(x)=\sum_{j=1}^{N_1}\frac{a_j}{x-p_j}+\sum_{j=0}^{N_2} b_jx^j:=r_{N_1}(x)+P_{N_2}(x), \quad N=N_1+N_2
\end{equation}
based upon
a new type of tapered exponential clustering:
\begin{equation}\label{eq:tapered2}
p_j =-C\exp\left(-\sigma\big(\sqrt{N_1}-\sqrt{j}\big)\right),\quad 1\leq j\leq N_1
\end{equation}
with $\sigma=\frac{2\pi}{\sqrt{\alpha}}$, which leads to a significant increase in the achievable accuracy as well as the convergence
rate compared with lightning methods with $N=N_1$. See  Fig. \ref{ratecomparison} and \cite{Herremans2023} for more details.
In addition,
using this clustering,
they showed that the degree $N_2$  of  polynomial $P_{N_2}$ can be chosen as $N_2=\mathcal{O}(\sqrt{N_1})$, where
the coefficients  in \eqref{eq:rat} are evaluated by  a least-squares system of the discrete best approximation.

\begin{figure}[htbp]
\centerline{\includegraphics[width=11cm]{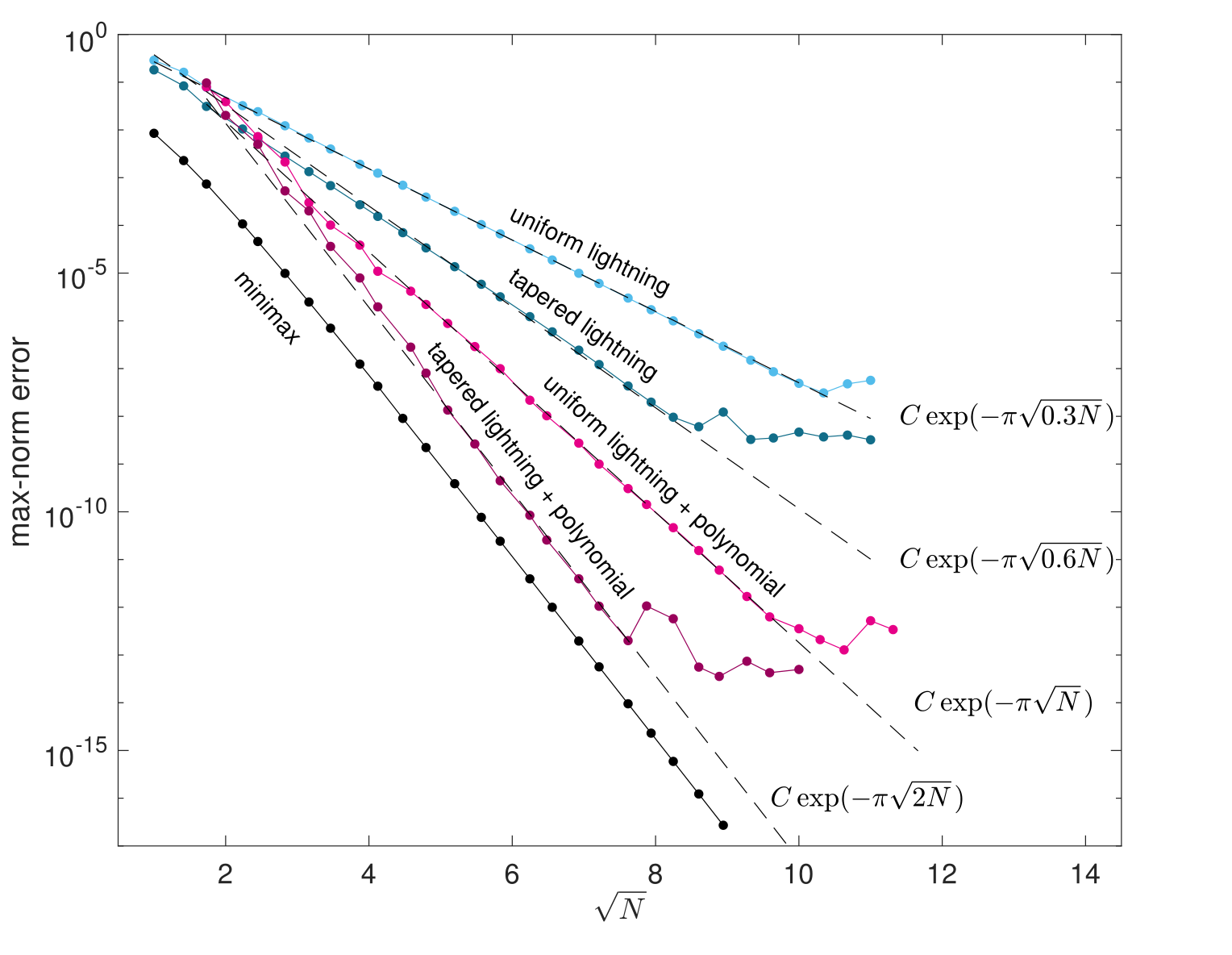}}
\caption{Convergence rates of the LPs with $\sigma=2\pi$ and $\sigma=2\sqrt{2}\pi$ compared with lightning with  $\sigma=\pi$ and $\sigma=\sqrt{2}\pi$ for $\sqrt{x}$ \cite{Herremans2023}, respectively. The lightning approximations (cf. \cite{Trefethen2021}) are equipped with $N =N_1$, while the LPSs with $N = N_1 + N_2$ and $N_2={\rm ceil}(1.3\sqrt{N_1})$. The best rational approximation of $\sqrt{x}$ based on solving a nonlinear approximation problem with free poles was studied
by Vja\v{c}eslavov \cite{1974Approximation}.}
\label{ratecomparison}
\end{figure}

Theoretical proof on the optimal convergence rate $\mathcal{O}(e^{-2\pi\sqrt{\alpha N}})$ for $\sqrt{x}$ is presented
in \cite{Herremans2023}
by an assumption (Conjecture A.7) regarding to the quadrature error of trapezoidal rule.
For $x^\alpha$ with $0<\alpha<1$ they put forth the following conjecture.

\begin{conjecture}\label{Conjecture 3.1}
\cite[Conjecture 3.1]{Herremans2023}. There exist coefficients $\{a_j\}_{j=1}^{N_1}$ and a polynomial $P_{N_2}$ with
$N_2 = \mathcal{O}(\sqrt{N_1})$, for which the LP $r_N(x)$ \eqref{eq:rat} having
tapered lightning poles \eqref{eq:tapered2} with $\sigma = \frac{2\pi}{\sqrt{\alpha}}$
satisfies:
\begin{equation*}\label{eq: brate}
|r_N(x)-x^\alpha|=\mathcal{O}\big(e^{-2\pi\sqrt{\alpha N}}\big)
\end{equation*}
as $N \rightarrow \infty$, uniformly for $x\in [0,1]$.
\end{conjecture}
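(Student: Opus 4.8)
The plan is to construct the coefficients $\{a_j\}_{j=1}^{N_1}$ and the polynomial $P_{N_2}$ explicitly from a trapezoidal discretization of the Stieltjes representation
\begin{equation*}
  x^\alpha=c_\alpha\int_0^\infty\frac{x\,s^{\alpha-1}}{s+x}\,ds,\qquad c_\alpha=\tfrac{\sin(\alpha\pi)}{\pi},\qquad 0<\alpha<1,\ x>0,
\end{equation*}
and to bound the error through a polynomial-approximation part, a near-singularity truncation part, and a quadrature part. Split the integral at $s=C$ into $I_{\mathrm{far}}(x)=c_\alpha\int_C^\infty\frac{xs^{\alpha-1}}{s+x}\,ds$ and $I_{\mathrm{near}}(x)=c_\alpha\int_0^C\frac{xs^{\alpha-1}}{s+x}\,ds$. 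Since $I_{\mathrm{far}}$ continues analytically to $\mathbb{C}\setminus(-\infty,-C]$, its best degree-$n$ polynomial approximant $Q_n$ on $[0,1]$ has uniform error $\mathcal{O}(\rho^{-n})$ with $\rho=1+2C+2\sqrt{C(C+1)}>1$ fixed. Choosing $N_2=\lceil c_0\sqrt{N}\,\rceil$ with $c_0>2\pi\sqrt{\alpha}/\log\rho$ then gives $N_2=\mathcal{O}(\sqrt{N_1})$ as in \eqref{eq:rat} and $\|I_{\mathrm{far}}-Q_{N_2}\|_{\infty,[0,1]}=\mathcal{O}(e^{-2\pi\sqrt{\alpha N}})$.

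For the near part, write $\tfrac{x}{s+x}=1-\tfrac{s}{s+x}$, so $I_{\mathrm{near}}(x)=\tfrac{c_\alpha C^\alpha}{\alpha}-c_\alpha\int_0^C\frac{s^\alpha}{s+x}\,ds$, and discretize $\int_0^C\frac{s^\alpha}{s+x}\,ds$ by the composite trapezoidal rule with nodes $s=s_j:=-p_j$, $j=1,\dots,N_1$, i.e.\ the reflections of the tapered poles \eqref{eq:tapered2}; this is precisely the trapezoidal rule after the substitution $s=s(j)$ that, by the very form of \eqref{eq:tapered2}, is equispaced in the integer index $j$. Carrying the Jacobian $ds=\tfrac{\sigma s}{2\sqrt{j}}\,dj$ along the nodes produces coefficients $a_j=-c_\alpha\sigma\,w_j\,s_j^{\alpha+1}/(2\sqrt{j})$ with trapezoidal weights $w_j=1$ for $1<j<N_1$ and $w_1=w_{N_1}=\tfrac12$ (the half-weight at $j=N_1$ cancels the otherwise $\mathcal{O}(N_1^{-1/2})$ endpoint term from $s_{N_1}=C$), together with a constant term; take $P_{N_2}=Q_{N_2}+c_\alpha C^\alpha/\alpha$. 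Then $r_N$ in \eqref{eq:rat} is determined and
\begin{equation*}
  x^\alpha-r_N(x)=\Big(I_{\mathrm{near}}(x)-\tfrac{c_\alpha C^\alpha}{\alpha}-\textstyle\sum_{j=1}^{N_1}\tfrac{a_j}{x-p_j}\Big)+\big(I_{\mathrm{far}}(x)-Q_{N_2}(x)\big),
\end{equation*}
whose second bracket is $\mathcal{O}(e^{-2\pi\sqrt{\alpha N}})$.

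The first bracket is the genuine quadrature error. Peeling off $\int_0^{s_1}$ costs $\mathcal{O}(s_1^\alpha)=\mathcal{O}(e^{-\sigma\alpha\sqrt{N_1}})$ uniformly in $x$ (by $\tfrac{x}{s+x}\le1$), and the rest is the trapezoidal-rule error of the composite rule, which I estimate by Poisson's summation formula applied to $\Psi(j):=a_j/(x-p_j)$ regarded as a function of a continuous index: the error is the sum of the nonzero Fourier modes, whose decay is governed by the distance from the real $j$-axis to the singularities of $\Psi$. Because $s_j$ involves $\sqrt{j}$, the poles of $\Psi$ (the roots of $x+s_j=0$) lie at $\sqrt{j}=\sqrt{N_1}+\sigma^{-1}\log(x/C)\pm i\pi/\sigma$, i.e.\ at $|\operatorname{Im}j|\asymp 2\pi\sqrt{N_1}/\sigma$ when $\operatorname{Re}j\asymp N_1$; thus the admissible analyticity half-width grows like $\tfrac{2\pi}{\sigma}\sqrt{j}$, whereas $|\Psi|$ is exponentially tiny for small $\operatorname{Re}j$ (where $s_j$ is tiny) and only $\mathcal{O}(N_1^{-1/2})$ near $\operatorname{Re}j\asymp N_1$. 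A Paley--Wiener-type contour deformation --- dipping toward the nearest poles precisely where $|\Psi|$ is largest --- then bounds the first bracket by $\mathcal{O}(e^{-(4\pi^2/\sigma)\sqrt{N_1}})+\mathcal{O}(e^{-\sigma\alpha\sqrt{N_1}})$ uniformly on $[0,1]$. The two exponents $4\pi^2/\sigma$ and $\sigma\alpha$ coincide exactly at $\sigma=2\pi/\sqrt{\alpha}$, with common value $2\pi\sqrt{\alpha}$; since $N_1=N-N_2$ and $N_2=\mathcal{O}(\sqrt{N_1})$, this yields $|r_N(x)-x^\alpha|=\mathcal{O}(e^{-2\pi\sqrt{\alpha N}})$ uniformly on $[0,1]$, proving the conjecture. (For a general clustering parameter the same argument gives the exact order $e^{-\min(4\pi^2/\sigma,\ \sigma\alpha)\sqrt{N}}$, optimal at $\sigma=2\pi/\sqrt{\alpha}$; a saddle-point evaluation of the dominant residue should further recover Stahl's constant.)

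The step I expect to be the main obstacle is this quadrature-error estimate. The analyticity domain of $\Psi$ in the $j$-plane is not a strip but a $\sqrt{j}$-widening parabolic neighbourhood of the positive real axis, terminated by a branch point at $j=0$, so the standard trapezoidal/Paley--Wiener bound does not apply off the shelf. One needs a carefully chosen non-horizontal contour that simultaneously exploits the $\sqrt{j}$-growth of the admissible shift and the growth/decay profile of $|\Psi|$, checks that the branch point at $j=0$ is harmless (it is, since $\Psi$ is exponentially small there), and reconciles the finite-sum endpoint correction at $j=N_1$ with the chosen $a_{N_1}$. Squeezing out the \emph{sharp} exponent $2\pi\sqrt{\alpha}$ --- rather than merely an upper bound --- is the technical heart, and is exactly the purpose of the ``results akin to the Paley--Wiener theorem'' announced in the abstract.
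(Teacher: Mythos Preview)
Your plan is essentially the paper's: integral representation, a near/far split with the far piece handled by polynomial approximation on a Bernstein ellipse, the near piece discretized so that the nodes are exactly the tapered poles, and the quadrature error bounded via Poisson summation with a Paley--Wiener-type contour shift in a region that widens like $\sqrt{j}$; the balance of the two exponents $\sigma\alpha$ and $4\pi^2/\sigma$ at $\sigma=2\pi/\sqrt{\alpha}$ is precisely what the paper establishes.

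One point you have under-specified, and which turns out to be where much of the paper's work goes, is the \emph{uniformity in $x$} of the quadrature bound. Your pole location $|\operatorname{Im} j_{\mathrm{pole}}|\asymp (2\pi/\sigma)\bigl(\sqrt{N_1}+\sigma^{-1}\log(x/C)\bigr)$ tends to zero as $x\downarrow 0$, so for small $x$ no contour shift is available and the Paley--Wiener argument collapses. The paper resolves this by splitting $[0,1]$ at a threshold $x^\ast\asymp e^{-T/\alpha}$: for $x\le x^\ast$ a direct monotonicity argument shows both integral and sum are $\le (x^\ast)^\alpha=\mathcal{O}(e^{-T})$ with no Fourier analysis; for $x\ge x^\ast$ the contour method yields a bound $x^\alpha/(e^{2\pi a/h}-1)$ with $a=2\alpha\pi(T+\alpha\log x)$ still $x$-dependent, and a separate optimization over $x$ (their Lemma~4.1) is what actually produces the two competing exponents. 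Your sketch anticipates the $4\pi^2/\sigma$ exponent from the pole distance, but the $\sigma\alpha$ exponent in the Poisson part enters through this $x$-optimization, not only through the $s_1^\alpha$ tail you peel off. Finally, the branch point at $j=0$ is handled in the paper not by smallness but by an even extension of the integrand to the whole line (their $\hat f$), which is what makes the bilateral Poisson formula applicable at all.
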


Conjecture \ref{Conjecture 3.1} states that based on the specific $\sigma = \frac{2\pi}{\sqrt{\alpha}}$, the LP exhibits a root-exponential convergence rate, which aligns with the best rational approximation in the sense of Stahl \cite{Stahl2003}. Compared  \eqref{eq:1tapered}  to \eqref{eq:tapered2}, the parameter $\sigma$ is increased by a factor of $\sqrt{2}$, which results in a sparser tapered clustering and a faster convergence rate from $\mathcal{O}(e^{-\pi\sqrt{2\alpha N}})$ to $\mathcal{O}(e^{-2\pi\sqrt{\alpha N}})$.

In this paper, we shall show that the LP \eqref{eq:rat} for general clustered parameter $\sigma>0$ in
\eqref{eq:tapered2} achieves  root-exponential convergence rate with an exact convergence order in approximating $x^\alpha$ with $0<\alpha<1$ and the choice of $\sigma=\frac{2\pi}{\sqrt{\alpha}}$ is the best among all $\sigma>0$ to get the fastest convergence rate. While
for  the LP
\begin{equation}\label{LPbasedonuniformclupole}
 x^\alpha\approx \bar{r}_N(x)=\sum_{j=1}^{N_1}\frac{\bar{a}_j}{x-q_j}+\sum_{j=0}^{N_2} \bar{b}_jx^j:=\bar{r}_{N_1}(x)+\bar{P}_{N_2}(x), \quad N=N_1+N_2
\end{equation}
stemmed from the uniform exponentially clustered poles
\begin{equation}\label{eq:uniform}
q_j =-C\exp\big(-\sigma j/\sqrt{N_1}\big),\quad 0\leq j\leq N_1-1,
\end{equation}
we shall show that the fastest convergence rate  is $\mathcal{O}(e^{-\pi\sqrt{2\alpha N}})$, which is achieved by choosing $\sigma=\frac{\sqrt{2}\pi}{\sqrt{\alpha}}$.

\begin{theorem}\label{mainthm}
There exist coefficients $\{a_j\}_{j=1}^{N_1}$ and a polynomial $P_{N_2}$, for which  $r_N(x)$ \eqref{eq:rat} having
tapered lightning poles \eqref{eq:tapered2} with $\sigma>0$
satisfies
\begin{equation}\label{eq: brateo}
|r_N(x)-x^\alpha|=e^{\sigma \sqrt{2M_0}}\cdot\left\{\begin{array}{ll}
\mathcal{O}(e^{-\sigma\alpha\sqrt{N}}),&\sigma\le \frac{2\pi}{\sqrt{\alpha}},\\
\frac{\mathcal{O}(1)}{e^{\frac{4\pi^2}{\sigma}\sqrt{N}}-1},&\sigma> \frac{2\pi}{\sqrt{\alpha}},
\end{array}\right.
\end{equation}
where
\begin{equation}\label{DefinitionOfM0}
M_0=2\max\left\{\frac{\sigma^2}{4},1+{\rm ceil}\left(\frac{9\pi^2}{\sigma^2}\right),\ 2{\rm ceil}\left[\left(1+\sqrt{\frac{\pi}{\sigma}}\right)^4\right]\right\},
\end{equation}
and there exist $\{\bar{a}_j\}_{j=1}^{N_1}$ and a polynomial $\bar{P}_{N_2}$ such that $\bar{r}_N(x)$
 \eqref{LPbasedonuniformclupole} having poles \eqref{eq:uniform} with $\sigma>0$ satisfies
\begin{equation}\label{eq: brateounif}
|\bar{r}_N(x)-x^\alpha|=\left\{\begin{array}{ll}
\mathcal{O}(e^{-\sigma\alpha\sqrt{N}}),&\sigma\le \frac{\sqrt{2}\pi}{\sqrt{\alpha}},\\
\frac{\mathcal{O}(1)}{e^{\frac{2\pi^2}{\sigma}\sqrt{N}}-1},&\sigma> \frac{\sqrt{2}\pi}{\sqrt{\alpha}},
\end{array}\right.
\end{equation}
as $N =N_1+N_2\rightarrow \infty$, uniformly for $x\in [0, 1]$ and all the constants in $\mathcal{O}$ terms are independent of $\alpha\in (0,1)$, $\sigma>0$ and $N$. Both of the polynomials $P_{N_2}$ and $\bar{P}_{N_2}$ are of degree
$N_2 = \mathcal{O}(\sqrt{N_1})$.
\end{theorem}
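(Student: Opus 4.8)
The plan is to reduce the approximation of $x^\alpha$ on $[0,1]$ to an approximation problem on an exponential change of variables, where the clustered poles become an (almost) equally spaced grid and where Poisson's summation formula can be brought to bear. Writing $x = e^{-t}$ with $t \in [0,\infty)$, the prototype becomes $e^{-\alpha t}$, and the partial fraction $\sum_j a_j/(x - p_j)$ with $p_j$ given by \eqref{eq:tapered2} corresponds, after reindexing by $s_j = \sqrt{N_1} - \sqrt{j}$ so that $p_j = -C e^{-\sigma s_j}$, to a sum of shifted copies of a fixed kernel. The first step is to identify the right integral representation: one writes $e^{-\alpha t}$ (equivalently $x^\alpha$) as a contour/Laplace-type integral of $1/(x - p)$ against a density supported on the negative real axis, e.g. via $x^\alpha = \frac{\sin(\alpha\pi)}{\pi}\int_0^\infty \frac{s^\alpha}{x+s}\,ds$, so that discretizing that integral on the exponentially graded nodes $-p_j$ with a trapezoidal-type rule produces exactly the lightning sum $r_{N_1}$, and the quadrature error is what we must bound.

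Second, I would apply Poisson's summation formula to the trapezoidal rule for $\frac{\sin(\alpha\pi)}{\pi}\int_{-\infty}^\infty g(u)\,du$ after the substitution that turns the graded nodes into a uniform grid of spacing $h \asymp \sigma/\sqrt{N_1}$; this converts the quadrature error into a sum over nonzero frequencies $k$ of $\widehat{g}(2\pi k/h)$. The decay of $\widehat{g}$ is governed by the width of the strip of analyticity of $g$, which is exactly where the Paley–Wiener-type results enter: the integrand inherits a strip of analyticity whose half-width is $\min\{\alpha, 2\pi/\sigma\}\cdot(\text{something})$ — the two competing quantities being the singularity of $s^\alpha$ (giving the $\alpha$-dependent branch obstruction, hence the $e^{-\sigma\alpha\sqrt N}$ term) and the spacing of the geometric grid (giving the $2\pi/\sigma$ obstruction, hence the $e^{-(4\pi^2/\sigma)\sqrt N}$ term). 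The crossover at $\sigma = 2\pi/\sqrt\alpha$ is precisely where these two widths coincide, which explains the case split in \eqref{eq: brateo}. The truncation of the infinite node set to $1 \le j \le N_1$ and the replacement of the tail of the integral contributes the polynomial correction $P_{N_2}$: the tail of $e^{-\alpha t}$ near $t = 0$ (i.e. $x$ near $1$) is smooth and is captured by a Taylor polynomial of degree $N_2 = \mathcal{O}(\sqrt{N_1})$, whose error is of the same root-exponential order; here the constant $M_0$ in \eqref{DefinitionOfM0} records how far out one must push the "boundary layer" so that the various error pieces balance, and the prefactor $e^{\sigma\sqrt{2M_0}}$ is the cost of carrying that layer.

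Third, the uniform-clustering case \eqref{LPbasedonuniformclupole}–\eqref{eq:uniform} is handled by the same machinery but with a genuinely equispaced grid in the $t = -\log x$ variable of spacing $\sigma/\sqrt{N_1}$ out to distance $\sigma\sqrt{N_1}$; now there is no square-root tapering, so the relevant strip half-width in the Poisson analysis is $\min\{\alpha,\ \pi/\sigma\}$ rather than $\min\{\alpha,\ 2\pi/\sigma\}$ up to the factor coming from the different node density, which yields the crossover at $\sigma = \sqrt2\,\pi/\sqrt\alpha$ and the rate $e^{-(2\pi^2/\sigma)\sqrt N}$ in the oversampled regime, exactly as in \eqref{eq: brateounif}. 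Optimizing $\min\{\sigma\alpha,\ 4\pi^2/\sigma\}$ over $\sigma$ gives $\sigma = 2\pi/\sqrt\alpha$ with value $2\pi\sqrt\alpha$ (recovering Stahl's rate and Conjecture \ref{Conjecture 3.1}), and $\min\{\sigma\alpha,\ 2\pi^2/\sigma\}$ gives $\sigma = \sqrt2\pi/\sqrt\alpha$ with value $\pi\sqrt{2\alpha}$ in the uniform case.

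I expect the main obstacle to be making the Poisson/Paley–Wiener step fully rigorous and, crucially, \emph{uniform} in all three parameters $\alpha \in (0,1)$, $\sigma > 0$, and $N$: one must control the analytic continuation of the integrand in a strip whose width shrinks as $\sigma \to \infty$ or $\alpha \to 0$, track how the pole locations $p_j$ approach the branch cut, and show that the sum over Poisson frequencies $k \neq 0$ telescopes into the stated closed form $\mathcal{O}(1)/(e^{(4\pi^2/\sigma)\sqrt N} - 1)$ in the oversampled regime rather than merely $\mathcal{O}(e^{-(4\pi^2/\sigma)\sqrt N})$ — the geometric-series shape of that bound is a hint that the $k$-sum is essentially geometric and must be summed exactly. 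A secondary technical point is verifying that the least-squares / interpolation construction of the coefficients $\{a_j\}$ and the polynomial actually attains the error bound produced by the explicit quadrature-plus-Taylor construction; this is where one invokes that a near-best approximation exists, so it suffices to exhibit \emph{one} good choice of coefficients, which the quadrature rule does.
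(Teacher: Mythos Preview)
Your high-level plan---integral representation of $x^\alpha$, change of variables to put the clustered poles on an (approximately) uniform grid, Poisson summation for the quadrature error, and a polynomial to absorb the smooth tail---matches the paper's architecture. But several key mechanisms are misidentified, and these are not cosmetic.

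\textbf{Fixed versus shrinking step size in the tapered case.} You say the tapered nodes become equispaced with spacing $h\asymp\sigma/\sqrt{N_1}$. In fact, after the substitution $t+T=\sqrt{u}$ (so $p_j=-e^{(\sqrt{jh}-T)/\alpha}$), the grid in $u$ has a \emph{fixed} step $h=\sigma^2\alpha^2$ independent of $N_1$; what grows is the domain length $N_t h$. This is the essential difference between the tapered and uniform schemes: in the uniform case the step $\hbar=\sigma\alpha/\sqrt{N_1}$ does shrink, but in the tapered case the Poisson error does \emph{not} decay because $h\to 0$. Instead, the Poisson bound is $\mathcal{O}(x^\alpha)/(e^{2\pi a/h}-1)$ with an $x$-dependent strip half-width $a=2\alpha\pi(T+\alpha\log x)$, and the rate in $N$ comes entirely from optimizing this over $x$ (the paper's Lemma~4.1). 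Your description of ``two competing strip widths, branch versus grid'' is the right picture for the uniform case but not for the tapered one.

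\textbf{The $x$-dependence forces a case split.} Because $a\to 0$ as $x\to 0^+$, the Poisson bound blows up near the origin and cannot be used there. The paper handles $x\in[0,x^*]$ by a separate, elementary monotonicity argument (their Theorem~3.3) showing the rectangular sum is a lower Riemann sum, so $0\le I(x)-S(x)\le x^\alpha\le (x^*)^\alpha=\mathcal{O}(e^{-T})$. Your proposal does not anticipate this split, and without it the uniformity in $x$ claimed in the theorem would fail.

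\textbf{The $1/\sqrt{u}$ singularity.} After the $\sqrt{u}$ substitution the integrand picks up a $1/(2\sqrt{u})$ factor, so $f(\cdot,x)\notin L^2(\mathbb{R}_+)$ and Poisson cannot be applied directly. The paper constructs an even extension $\hat f$ that flattens the singularity on $[-h,h]$ and then runs Poisson on $\hat f$, with additional contour-shifting lemmas (their Appendix~A) to bound the Fourier transforms. This is where the constant $M_0$ actually originates---it is the threshold on $\Re(u_0)$ ensuring the contour estimates hold---not a ``boundary layer'' in $t$.

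\textbf{The polynomial part.} The polynomial $P_{N_2}$ does not come from a Taylor expansion of $e^{-\alpha t}$ near $t=0$. It approximates the analytic remainder $r_2(x)$ consisting of the constant term plus the rational pieces with $|p_j|\ge 1$ (i.e.\ $j>N_1$); since these poles lie outside a Bernstein ellipse, a truncated Chebyshev series of degree $N_2\ge T/\log\varrho_1=\mathcal{O}(\sqrt{N_1})$ achieves error $\le e^{-T}$.

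Your final paragraph on the uniform case and the optimization over $\sigma$ is correct in outline.
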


To approximate multivariate functions with curves of singularities, Boull\'{e}, Hermanns and Huybrechs  \cite{boulle2023multivariate}  extended the LPs \eqref{eq:rat} and    \eqref{LPbasedonuniformclupole}    to explore the
construction of multivariate rational approximation by adopting the tapered exponentially clustered poles
\begin{align}\label{eq:1taperedimag}
\overline{p}_j =&i\sqrt{C}\exp\left(-\frac{\sigma}{2}\big(\sqrt{N_1}-\sqrt{j}\big)\right),\quad 1\le j\le N_1,
\end{align}
i.e., the square roots of $p_j$ \eqref{eq:tapered2} distributed along the imaginary axis.
The corresponding LP is of the form
\begin{align}
\widetilde{r}_N(x)=&\sum_{j=1}^{N_1}\left\{\frac{\widetilde{a}_j}{x-\bar{p}_j}-\frac{\widetilde{a}_j}{x+\bar{p}_j}\right\}+\sum_{j=0}^{N_2} \widetilde{b}_jx^j:=\widetilde{r}_{N_1}(x)+\widetilde{P}_{N_2}(x), \quad N=N_1+N_2.\label{LPwithimaginarypoles}
\end{align}
Analogously, equipped with the uniform exponentially clustered poles
\begin{align}
\overline{q}_j =&i\sqrt{C}\exp\left(-\frac{\sigma j}{2\sqrt{N_1}}\right),\quad 0\le j\le N_1-1,\label{eq:uniformgenimag_uniformal}
\end{align}
the LP is of the form
\begin{align}
\widehat{r}_N(x)=&\sum_{j=1}^{N_1}\left\{\frac{\widehat{a}_j}{x-\overline{q}_j}-\frac{\widehat{a}_j}{x+\overline{q}_j}\right\}+\sum_{j=0}^{N_2} \widehat{b}_jx^j:=\widehat{r}_{N_1}(x)+\widehat{P}_{N_2}(x), \quad N=N_1+N_2.\label{LPwithuniformalclusteringpoles}
\end{align}

By applying the integral representation of $x^\alpha$ and derivation of rational schemes of   \eqref{eq:rat}  and  \eqref{LPbasedonuniformclupole}, it directly leads  by Theorem \ref{mainthm} to
\begin{theorem}\label{mainthm2}
There exist coefficients $\{\widetilde{a}_j\}_{j=1}^{N_1}$ and a polynomial $\widetilde{P}_{N_2}$, for which  $\widetilde{r}_N(x)$ \eqref{LPwithimaginarypoles}
with
tapered lightning poles \eqref{eq:1taperedimag}
satisfies for  $0<\alpha<1$ that
\begin{equation}\label{eq: brateoimag}
\Big|\widetilde r_N(x)-|x|^{2\alpha}\Big|
=e^{\sigma \sqrt{2M_0}}\cdot
\left\{\begin{array}{ll}
\mathcal{O}(e^{-\sigma\alpha\sqrt{N}}),&\sigma\le \frac{2\pi}{\sqrt{\alpha}},\\
\frac{\mathcal{O}(1)}{e^{\frac{4\pi^2}{\sigma}\sqrt{N}}-1},&\sigma> \frac{2\pi}{\sqrt{\alpha}},
\end{array}\right.
\end{equation}
and there exist $\{\widehat{a}_j\}_{j=1}^{N_1}$ and a polynomial $\widehat{P}_{N_2}$, such that $\widehat{r}_N(x)$ \eqref{LPwithuniformalclusteringpoles}
with uniform lightning poles \eqref{eq:uniformgenimag_uniformal}
satisfies for that
\begin{equation}\label{eq: brateo_uniform_imag}
\Big|\widehat r_N(x)-|x|^{2\alpha}\Big|
=\left\{\begin{array}{ll}
\mathcal{O}(e^{-\sigma\alpha\sqrt{N}}),&\sigma\le \frac{\sqrt{2}\pi}{\sqrt{\alpha}},\\
\frac{\mathcal{O}(1)}{e^{\frac{2\pi^2}{\sigma}\sqrt{N}}-1},&\sigma> \frac{\sqrt{2}\pi}{\sqrt{\alpha}},
\end{array}\right.
\end{equation}
as $N =N_1+N_2\rightarrow \infty$ uniformly for  $x\in [-1,1]$,  and the constants in $\mathcal{O}$ terms are independent of $\alpha\in (0,1)$, $\sigma>0$ and $N$.
Both of the polynomials $\widetilde{P}_{N_2}$ and $\widehat{P}_{N_2}$ are of degree
$N_2 = \mathcal{O}(\sqrt{N_1})$.
\end{theorem}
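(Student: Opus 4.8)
The plan is to deduce Theorem~\ref{mainthm2} from Theorem~\ref{mainthm} via the substitution $t=x^{2}$, which maps $[-1,1]$ onto $[0,1]$, turns $|x|^{2\alpha}$ into $t^{\alpha}$, and---upon squaring---carries the imaginary pole families \eqref{eq:1taperedimag} and \eqref{eq:uniformgenimag_uniformal} onto the real pole families \eqref{eq:tapered2} and \eqref{eq:uniform}; indeed, directly from the definitions one has $\bar{p}_{j}^{2}=p_{j}$ and $\bar{q}_{j}^{2}=q_{j}$.

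First I would treat the tapered case. Fixing $N_{1}$, Theorem~\ref{mainthm} furnishes coefficients $\{a_{j}\}_{j=1}^{N_{1}}$ and a polynomial $P_{M}$ of degree $M=\mathcal{O}(\sqrt{N_{1}})$ with
\begin{equation*}
\max_{t\in[0,1]}\Bigl|t^{\alpha}-\sum_{j=1}^{N_{1}}\frac{a_{j}}{t-p_{j}}-P_{M}(t)\Bigr|
=e^{\sigma\sqrt{2M_{0}}}\cdot
\begin{cases}
\mathcal{O}\bigl(e^{-\sigma\alpha\sqrt{N_{1}+M}}\bigr), & \sigma\le \frac{2\pi}{\sqrt{\alpha}},\\
\mathcal{O}(1)/\bigl(e^{4\pi^{2}\sqrt{N_{1}+M}/\sigma}-1\bigr), & \sigma> \frac{2\pi}{\sqrt{\alpha}}.
\end{cases}
\end{equation*}
Then I would set $t=x^{2}$ and use the partial-fraction identity
\begin{equation*}
\frac{a_{j}}{x^{2}-p_{j}}=\frac{a_{j}}{x^{2}-\bar{p}_{j}^{2}}
=\frac{a_{j}}{2\bar{p}_{j}}\Bigl(\frac{1}{x-\bar{p}_{j}}-\frac{1}{x+\bar{p}_{j}}\Bigr),
\end{equation*}
valid since $\bar{p}_{j}\ne 0$; this rewrites the approximant in exactly the form \eqref{LPwithimaginarypoles} with $\widetilde{a}_{j}=a_{j}/(2\bar{p}_{j})$ and $\widetilde{P}_{N_{2}}(x)=P_{M}(x^{2})$, an even polynomial of degree $N_{2}=2M=\mathcal{O}(\sqrt{N_{1}})$. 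Since $x\in[-1,1]$ forces $x^{2}\in[0,1]$, the uniform error is unchanged, $\max_{x\in[-1,1]}\bigl|\widetilde{r}_{N}(x)-|x|^{2\alpha}\bigr|=\max_{t\in[0,1]}\bigl|r_{N}(t)-t^{\alpha}\bigr|$, and \eqref{eq: brateoimag} drops out once one notes that replacing $N_{1}+M$ by $N=N_{1}+2M$ in the exponents costs only a bounded multiplicative factor: $\sqrt{N_{1}+M}=\sqrt{N}+\mathcal{O}(1)$ with an absolute $\mathcal{O}(1)$, while in each regime the controlling quantity---$\sigma\alpha\le 2\pi$ in the first, $4\pi^{2}/\sigma<2\pi$ in the second---stays bounded there.

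The uniform case runs verbatim: I would apply the second half of Theorem~\ref{mainthm} with poles \eqref{eq:uniform}, substitute $t=x^{2}$, invoke $\bar{q}_{j}^{2}=q_{j}$ and the same splitting to reach the shape \eqref{LPwithuniformalclusteringpoles} with $\widehat{a}_{j}=\bar{a}_{j}/(2\bar{q}_{j})$ and $\widehat{P}_{N_{2}}(x)=\bar{P}_{M}(x^{2})$ of degree $\mathcal{O}(\sqrt{N_{1}})$, and then transfer \eqref{eq: brateounif} to \eqref{eq: brateo_uniform_imag}; here the two thresholds slide from $\frac{2\pi}{\sqrt{\alpha}}$ to $\frac{\sqrt{2}\pi}{\sqrt{\alpha}}$ and the subexponential rates from $4\pi^{2}/\sigma$ to $2\pi^{2}/\sigma$, exactly as stated.

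I do not expect a genuine analytic obstacle, as all the content is already carried by Theorem~\ref{mainthm}; the remaining work is bookkeeping---confirming that squaring sends the prescribed imaginary poles onto the prescribed real poles, that the even polynomial $P_{M}(x^{2})$ has degree $2M$ so the $\mathcal{O}(\sqrt{N_{1}})$ degree bound survives the doubling, and that moving from $N_{1}+M$ to $N=N_{1}+2M$ in the exponents changes the $\mathcal{O}$-constants only by a factor uniform in $\alpha\in(0,1)$ and $\sigma>0$. The one point deserving care is this last uniformity check, which hinges on the $\mathcal{O}(\sqrt{N_{1}})$ bound for $M$ in Theorem~\ref{mainthm} being itself independent of $\sigma$.
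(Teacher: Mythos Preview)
Your proposal is correct and follows essentially the same route as the paper: both apply Theorem~\ref{mainthm} to $t^{\alpha}$ on $[0,1]$, substitute $t=x^{2}$, use the identities $\bar{p}_{j}^{2}=p_{j}$, $\bar{q}_{j}^{2}=q_{j}$ together with the partial-fraction splitting $\frac{1}{x^{2}-p_{j}}=\frac{1}{2\bar{p}_{j}}\bigl(\frac{1}{x-\bar{p}_{j}}-\frac{1}{x+\bar{p}_{j}}\bigr)$, and take $\widetilde{P}_{N_{2}}(x)=P_{N_{2}}(x^{2})$. The paper is terser about the degree doubling and the passage from $N_{1}+M$ to $N_{1}+2M$ in the exponents, whereas you spell out explicitly why $\sigma\alpha\le 2\pi$ (resp.\ $4\pi^{2}/\sigma<2\pi$) keeps the extra factor uniformly bounded; this is useful care that the paper leaves implicit.
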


From Theorems \ref{mainthm} and \ref{mainthm2}, we see that the optimal choices of the parameter $\sigma$ can be determined readily and thus, the fastest convergence rate of the LP with
tapered lightning poles \eqref{eq:tapered2} or \eqref{eq:1taperedimag} is $\mathcal{O}(e^{-2\pi\sqrt{N\alpha}})$ while  $\mathcal{O}(e^{-\pi\sqrt{2N\alpha}})$ with uniform lightning poles \eqref{eq:uniform} or \eqref{eq:uniformgenimag_uniformal}, respectively.
In addition, the convergence rate is attainable for each  $\sigma>0$ too.

Figs. \ref{rates} - \ref{rates_uniform_imag}  illustrate  the best choices of $\sigma$ and that the convergence rates of the LPs in Theorems \ref{mainthm}  and \ref{mainthm2} are attainable for each  given $\sigma>0$, respectively.

\begin{figure}[htbp]
\centerline{\includegraphics[height=6cm,width=16cm]{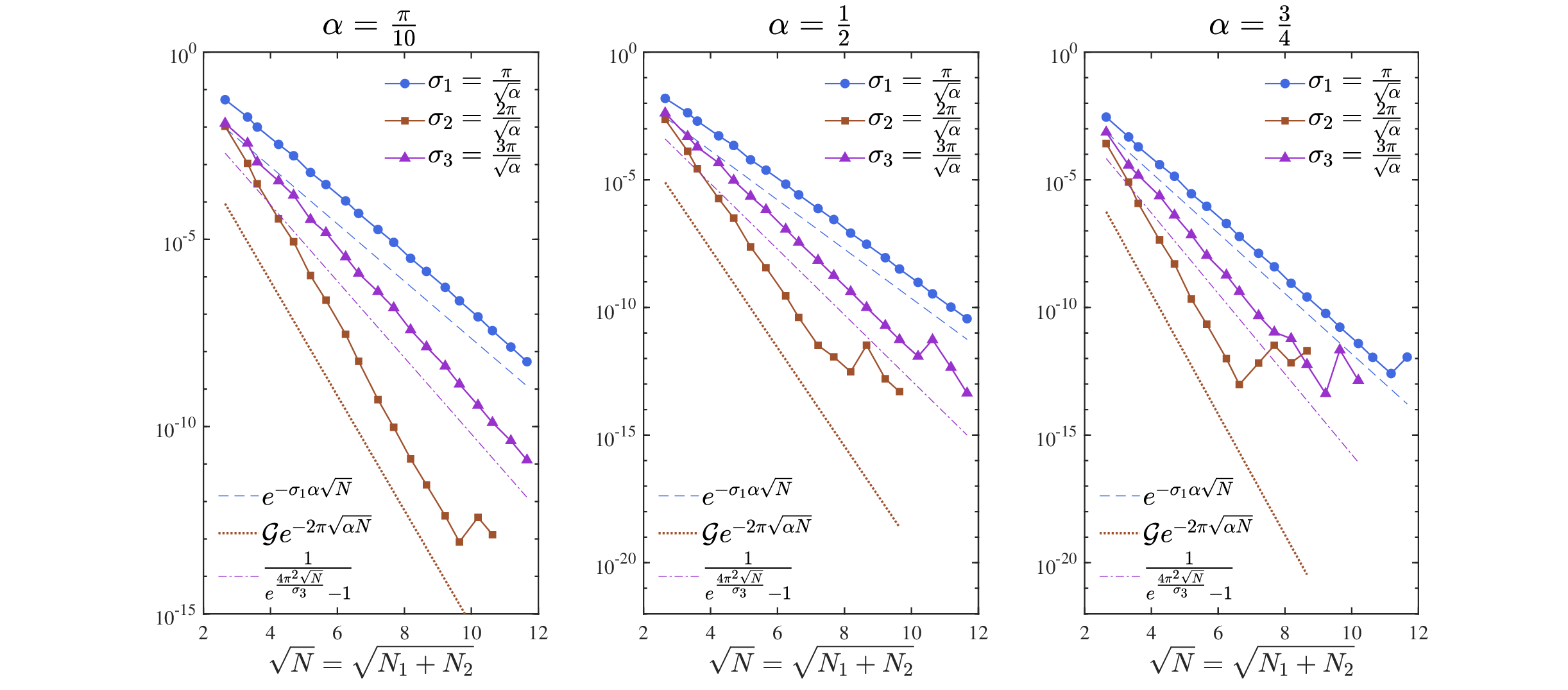}}
\caption{Convergence rates of $r_{N}(x)$ for $x^\alpha$ with various values of $\alpha$ and
$\sigma=\frac{\pi}{\sqrt{\alpha}}\ (h=\pi^2\alpha)$, $\sigma=\frac{2\pi}{\sqrt{\alpha}}\ (h=4\pi^2\alpha)$ and
$\sigma=\frac{3\pi}{\sqrt{\alpha}}\ (h=9\pi^2\alpha)$, where $\mathcal{G}=4^{1+\alpha}\sin(\alpha\pi)$ and $N=N_1+N_2$, $N_1=4:4:100$, $N_2={\rm ceil}(1.3\sqrt{N_1})$.}
\label{rates}
\end{figure}

\begin{figure}[htbp]
\centerline{\includegraphics[height=6cm,width=16cm]{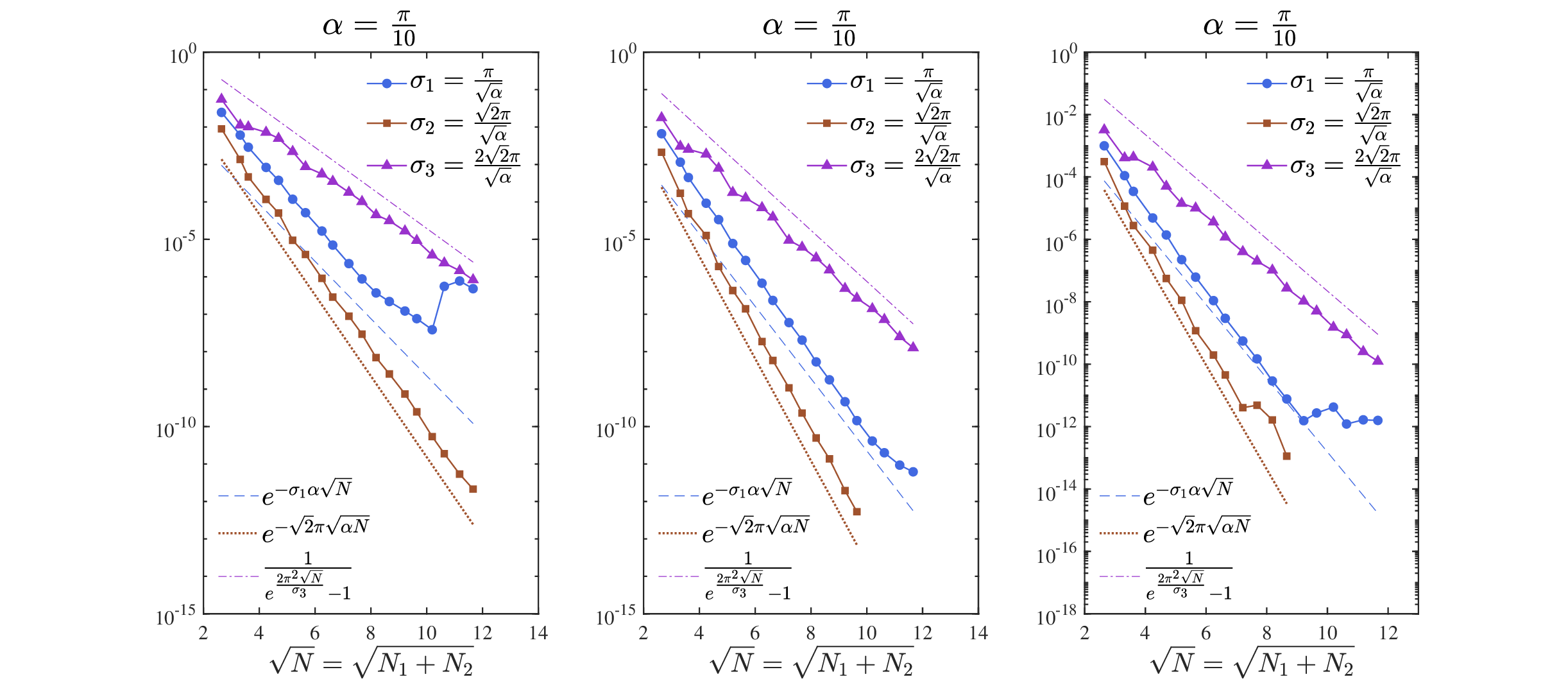}}
\caption{Convergence rates of  $\bar{r}_{N}(x)$  for $x^\alpha$ with various values of $\alpha$ and
$\sigma=\frac{\pi}{\sqrt{\alpha}}\ (\hbar=\pi\sqrt{\alpha}/\sqrt{N_1})$, $\sigma=\frac{\sqrt{2}\pi}{\sqrt{\alpha}}\ (\hbar=\sqrt{2\alpha}\pi/\sqrt{N_1})$ and
$\sigma=\frac{2\sqrt{2}\pi}{\sqrt{\alpha}}\ (\hbar=2\sqrt{2\alpha}\pi/\sqrt{N_1})$, where $N=N_1+N_2$ and $N_1=4:4:100$, $N_2={\rm ceil}(1.3\sqrt{N_1})$.}
\label{rates_uniform_real}
\end{figure}

\begin{figure}[htbp]
\centerline{\includegraphics[height=6cm,width=16cm]{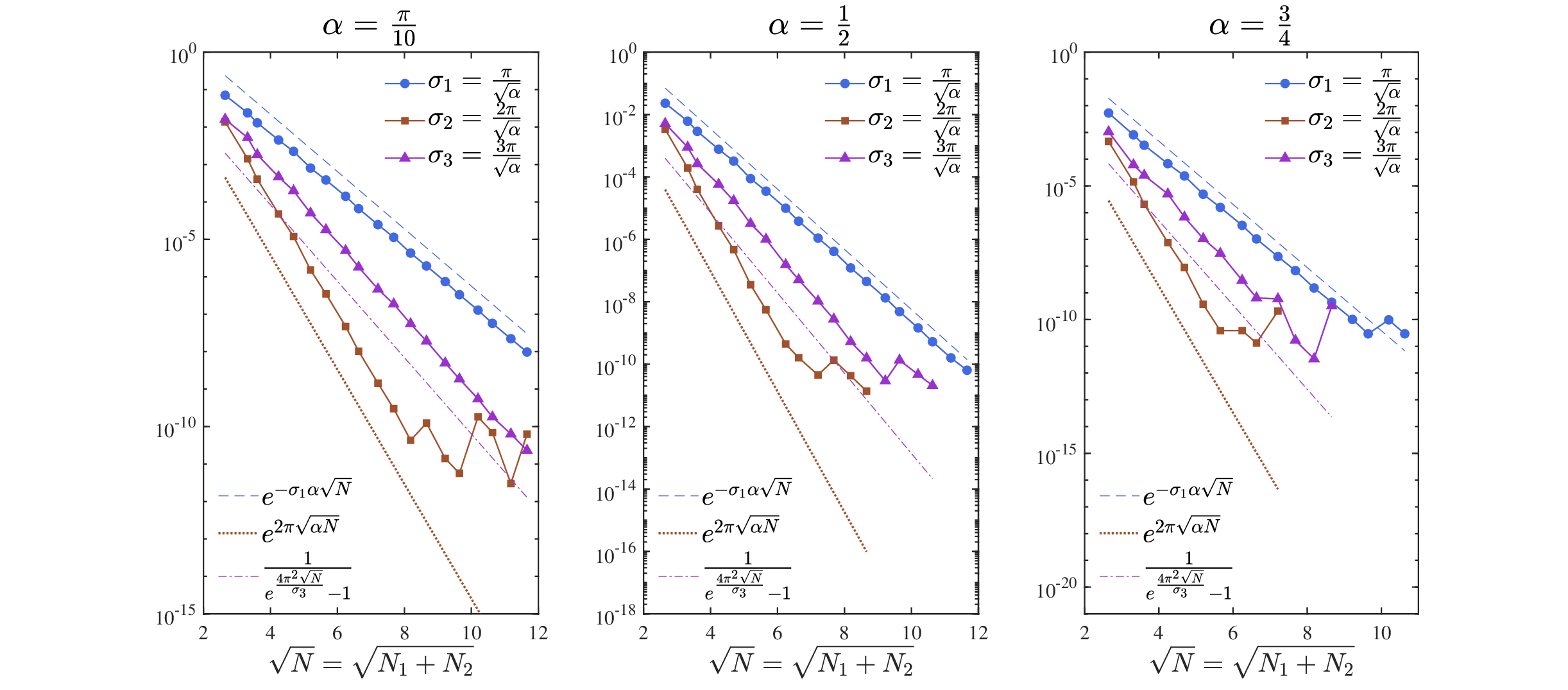}}
\caption{Convergence rates of  $\widetilde{r}_{N}(x)$  for $|x|^{2\alpha}$ with various values of $\alpha$ and
$\sigma=\frac{\pi}{\sqrt{\alpha}}\ (h=\pi^2\alpha)$, $\sigma=\frac{2\pi}{\sqrt{\alpha}}\ (h=4\pi^2\alpha)$ and
$\sigma=\frac{3\pi}{\sqrt{\alpha}}\ (h=9\pi^2\alpha)$, where $N=N_1+N_2$ and $N_1=4:4:100$, $N_2={\rm ceil}(1.3\sqrt{N_1})$.}
\label{rates_tapered_imag}
\end{figure}

\begin{figure}[htbp]
\centerline{\includegraphics[height=6cm,width=16cm]{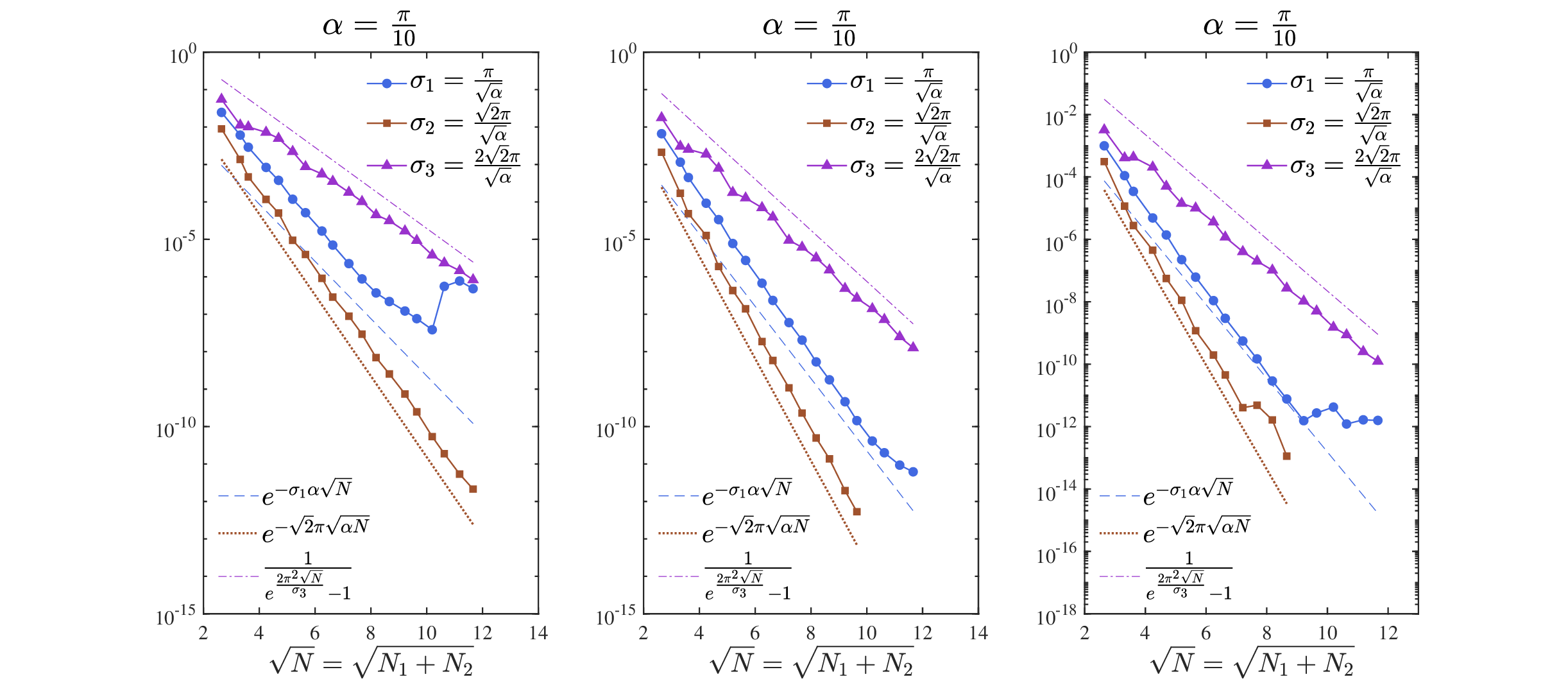}}
\caption{Convergence rates of  $\widehat{r}_{N}(x)$   for $|x|^{2\alpha}$ with various values of $\alpha$ and
$\sigma=\frac{\pi}{\sqrt{\alpha}}\ (\hbar=\pi\sqrt{\alpha}/\sqrt{N_1})$, $\sigma=\frac{\sqrt{2}\pi}{\sqrt{\alpha}}\ (\hbar=\sqrt{2\alpha}\pi/\sqrt{N_1})$ and
$\sigma=\frac{2\sqrt{2}\pi}{\sqrt{\alpha}}\ (\hbar=2\sqrt{2\alpha}\pi/\sqrt{N_1})$, where $N=N_1+N_2$ and $N_1=4:4:100$, $N_2={\rm ceil}(1.3\sqrt{N_1})$.}
\label{rates_uniform_imag}
\end{figure}

The crucial point for the analysis on the convergence rates is to utilize Poisson's summation formula (cf. \cite[(10.6-21)]{Henrici}, \cite{Stenger} and \cite{Trefethen2014SIREV}) to estimate the quadrature errors of  the composite rectangular rules for the integrals over the whole real line.

The Poisson summation formula is an important result relating Fourier transforms and Fourier series.
Based upon Poisson's formula, together with the decay behaviors of Fourier transforms and Paley-Wiener theorem \cite{Henrici,Stenger,PaleyWiener1934}, it follows that
the composite rectangular rule for  $w\in L^2(\mathbb{R})$ satisfies
\begin{align}\label{eq:trap0}
 \int_{\mathbb{R}}w(t)\mathrm{d}t-h\sum_{k=-\infty}^{+\infty}w(kh)=\mathcal{O}\left(e^{-\frac{2\pi d}{h}}\right)
\end{align}
if the Fourier transform $\mathfrak{F}[w]$ of $w$  is such that $|\mathfrak{F}[w](y)|= \mathcal{O}(e^{-d|y|})$ as $y\rightarrow \infty$ for some positive constant $d$ \cite[Theorem 1.3.2]{Stenger}.

In particular, the constant in the above $\mathcal{O}$ term \eqref{eq:trap0} can be specific suppose $w(z)$ is analytic in the strip $|\Im(z)| <d$ (or $\Im(z)>-d$, respectively) for some $d > 0$, and
$w(z)\rightarrow 0$ uniformly as $|z|\rightarrow +\infty$ in the strip (or in that half-plane, respectively), and for some $M$,
it satisfies
\begin{align*}
 \int_{\mathbb{R}}|w(t+ib)|\mathrm{d}t\le M
\end{align*}
for all $b\in (-d,d)$ (or $b>-d$, respectively), then it holds
\begin{align}\label{eq:trap1}
\Big| \int_{\mathbb{R}}w(t)\mathrm{d}t-h\sum_{k=-\infty}^{+\infty}w(kh)\Big|\le \frac{2M}{e^{\frac{2\pi d}{h}}-1},
\end{align}
and the quantity $2M$ in the numerator is as small as possible \cite[Theorems 5.1-5.2]{Trefethen2014SIREV}, where $\Im(z)$ denotes the  imaginary part of $z$.

From \eqref{eq:trap0} and \eqref{eq:trap1} we see that the composite rectangular rule for the integral
over the whole real line can achieve an exponential rate as the stepsize $h\rightarrow 0$.

It is of particular interest that
both the composite rectangular rules for approximation of $x^\alpha$ involving the exponential clustering of poles \eqref{eq:tapered2} or \eqref{eq:uniform} can achieve exponential rates on $T=\sqrt{N_1h}=N_1\hbar$. One is with fixed step size $h=\sigma^2\alpha^2$ while the other is with $\hbar=\frac{\sigma\alpha}{\sqrt{N_1}}$.


The rest of this paper is organized as follows. In Section \ref{sec:2}, we provide preparatories on the integral formula of $x^\alpha$, the decay behaviors of  the truncated errors on the integral formula of $x^\alpha$,  and  the rational approximation forms, from which Theorem \ref{mainthm2} is proved if Theorem \ref{mainthm}  holds.
Poisson's summation formula  on the quadrature errors and results akin to Paley-Wiener Theorem are presented in Section \ref{sec:3}.
The complete proofs of Theorem \ref{mainthm} and Conjecture \ref{Conjecture 3.1} are presented in Section \ref{sec:4}. 
The detailed proofs of some useful lemmas in proving Theorem \ref{mainthm}  are presented in Appendix \ref{AppendixA}.


\section{Preparatories}
\label{sec:2}

According to \cite[(3.222), p. 319]{GR2014}, $x^\alpha$ with $0<\alpha<1$ can be represented for $x\in [0,1]$  by
 \begin{align}\label{eq:int}
x^\alpha&=\frac{\sin(\alpha\pi)}{\alpha\pi}\int_0^{+\infty} \frac{x}{y^{\frac{1}{\alpha}}+x}\mathrm{d}y
=\frac{\sin(\alpha\pi)}{\alpha\pi}\int_{-\infty}^{+\infty} \frac{xe^t}{e^{\frac{1}{\alpha}t}+x}\mathrm{d}t\notag\\
&=\frac{\sin(\alpha\pi)}{\alpha\pi}\left\{\int_{-\infty}^{-T}+\int_{-T}^{\kappa T}+\int_{\kappa T}^{+\infty}\right\}
\frac{xe^t}{e^{\frac{1}{\alpha}t}+x}\mathrm{d}t
\end{align}
for $\kappa:=\frac{\alpha}{1-\alpha}$ and any positive real number $T$.

We first show  the two truncated errors in \eqref{eq:int} satisfy that
\begin{align*}
0\le \frac{\sin(\alpha\pi)}{\alpha\pi}\left\{\int_{-\infty}^{-T}+\int_{\kappa T}^{+\infty}\right\}
\frac{xe^t}{e^{\frac{1}{\alpha}t}+x}\mathrm{d}t
&\le\frac{e^{-T}\sin(\alpha\pi)}{\alpha\pi}+\frac{e^{-T}\sin((1-\alpha)\pi)}{(1-\alpha)\pi}\le 2e^{-T}
\end{align*}
which directly follows from the following inequalities for all $x\in [0,1]$
 \begin{align}\label{eq:inequ}
\frac{xe^t}{e^{\frac{1}{\alpha}t}+x}\le e^t,\quad \forall t\in (-\infty,0]  \quad\text{and}\quad \frac{xe^t}{e^{\frac{1}{\alpha}t}+x}\le e^{-\frac{1}{\kappa}t},\quad \forall t\in [0,+\infty).
 \end{align}
Together with  \eqref{eq:int} it leads to
 \begin{align}\label{eq:est}
x^\alpha
=&\frac{\sin(\alpha\pi)}{\alpha\pi}\int_{-T}^{\kappa T}
\frac{xe^t}{e^{\frac{1}{\alpha}t}+x}\mathrm{d}t+E^{(1)}_T(x),\quad \|E^{(1)}_T\|_{\infty}\le 2e^{-T}.
\end{align}

\subsection{LP schemes based on the tapered exponential clustering
poles \eqref{eq:tapered2} and \eqref{eq:1taperedimag}}
Along the way \cite{Herremans2023}, setting $t+T=\sqrt{u}$ for $t\in [-T,\kappa T]$, the integral in \eqref{eq:est} is transformed into
 \begin{align*}
 \frac{\sin(\alpha\pi)}{\alpha\pi}\int_{-T}^{\kappa T}\frac{xe^t}{e^{\frac{1}{\alpha}t}+x}\mathrm{d}t
 =\frac{\sin(\alpha\pi)}{\alpha\pi}\int_0^{(\kappa+1)^2T^2}
 \frac{1}{2\sqrt{u}}\frac{xe^{\sqrt{u}-T}}{e^{\frac{1}{\alpha}(\sqrt{u}-T)}+x}\mathrm{d}u.
 \end{align*}
Discretization using the rectangular rule in $N_t$ quadrature points $u = jh, \ 1 \le j \le N_t$ with step length $h=\sigma^2\alpha^2$:
\begin{align*}
T=\sqrt{N_1h}=\sigma\alpha\sqrt{N_1},\quad \mathcal{N}_th =(\kappa+1)^2T^2,\quad N_t={\rm ceil}(\mathcal{N}_t+1)
 \end{align*}
gives rise to the following rational approximation $r_{N_t}$ to $x^\alpha$
\begin{align}\label{eq:ECrat}
x^\alpha=&\frac{\sin(\alpha\pi)}{\alpha\pi}\int_0^{(\kappa+1)^2T^2}\frac{1}{2\sqrt{u}}
 \frac{xe^{\sqrt{u}-T}}{e^{\frac{1}{\alpha}(\sqrt{u}-T)}+x}\mathrm{d}u+E^{(1)}_T(x)\notag\\
 =&\frac{\sin(\alpha\pi)}{\alpha\pi}\int_0^{N_th}\frac{1}{2\sqrt{u}}
 \frac{xe^{\sqrt{u}-T}}{e^{\frac{1}{\alpha}(\sqrt{u}-T)}+x}\mathrm{d}u+E_T(x)\notag\\
 \approx & r_{N_t}(x) +E_T(x)
 \end{align}
with $\|E_T\|_{\infty}\le 3e^{-T}$, where  we used
\begin{align*} 0\le \frac{\sin(\alpha\pi)}{\alpha\pi}\int_{(\kappa+1)^2T^2}^{N_th}\frac{1}{2\sqrt{u}}
 \frac{xe^{\sqrt{u}-T}}{e^{\frac{1}{\alpha}(\sqrt{u}-T)}+x}\mathrm{d}u
 \le& \frac{\sin(\alpha\pi)}{\alpha\pi}\int_{\kappa T}^{+\infty}e^{-\frac{1}{\kappa}t}\mathrm{d}t\\
 =&\frac{\sin((1-\alpha)\pi)}{e^T(1-\alpha)\pi}\le e^{-T}
  \end{align*}
   by \eqref{eq:inequ}, and $r_{N_t}$ can be represented  as
\begin{align}\label{eq:ECrat1}
 r_{N_t}(x)=&\frac{\sin(\alpha\pi)}{\alpha\pi}h
 \sum_{j=1}^{N_t}\frac{1}{2\sqrt{jh}}
\frac{xe^{\sqrt{jh}-T}}{e^{\frac{1}{\alpha}(\sqrt{jh}-T)}+x}\hspace{.5cm}
\left(p_j=-e^{\frac{\sqrt{h}}{\alpha}\big(\sqrt{j}-\sqrt{N_1}\big)}\right)\notag\\
=&\frac{\sin(\alpha\pi)}{2\alpha\pi}\left[
\sum_{j=1}^{N_1}\sqrt{\frac{h}{j}}
\frac{(x-p_j+p_j)|p_j|^\alpha}{x-p_j}+\sum_{j=N_1+1}^{N_t}\sqrt{\frac{h}{j}}
\frac{x|p_j|^\alpha}{x-p_j}\right]\\
=&\frac{\sin(\alpha\pi)}{2\alpha\pi}
\sum_{j=1}^{N_1}\sqrt{\frac{h}{j}}\frac{p_j|p_j|^\alpha}{x-p_j}
+\frac{\sin(\alpha\pi)}{2\alpha\pi}\left(\sum_{j=1}^{N_1}\sqrt{\frac{h}{j}}|p_j|^\alpha+\sum_{j=N_1+1}^{N_t}\sqrt{\frac{h}{j}}\frac{x|p_j|^\alpha}{x-p_j}
\right)\notag\\
=&:r_{N_1}(x)+r_2(x).\notag
 \end{align}
In particular, from $p_j$ in \eqref{eq:ECrat1} and \eqref{eq:tapered2}, we see that  $C=1$.

Furthermore,
$r_2(x)=r_2((z+1)/2)$ ($z\in [-1,1]$) can be efficiently approximated by a polynomial $P_{N_2}(x)$ with $N_2=\mathcal{O}(\sqrt{N_1})$: Noting that $p_j\le -1$ for $j> N_1$, it is obvious that $r_2((z+1)/2)$ is analytic in disc ${\bf B}(0,2)$ with radius $2$ which contains a Bernstein ellipse $E_{\varrho_1}$\footnote{The Bernstein ellipse $E_{\varrho_1}$ with $\varrho_1=2+\sqrt{3}$ is a special ellipse with the foci at $\pm1$, whose major and minor
semiaxis lengths are $2$ and $\sqrt{3}$, respectively, summing to $\varrho_1$. See \cite{Bernstein1919,Trefethen2013} for details.} with $\varrho_1=2+\sqrt{3}$. In addition,
\begin{align*}
V:=\max_{z\in E_{\varrho_1}}|r_2((z+1)/2)|\le \max_{z\in {\bf B}(0,2)}|r_2((z+1)/2)|< 4
\end{align*}
which is established by
\begin{align*}
|r_2(z)|=&\frac{h\sin(\alpha\pi)}{\alpha\pi}\left|
\sum_{j=1}^{N_1}\frac{e^{\sqrt{jh}-T}}{2\sqrt{jh}}
+\sum_{j=N_1+1}^{N_t}\frac{1}{2\sqrt{jh}}\frac{\frac{1+re^{i\theta}}{2}e^{\sqrt{jh}-T}}{e^{\frac{1}{\alpha}(\sqrt{jh}-T)}+\frac{1+re^{i\theta}}{2}}
\right|\\
\le& \frac{\sin(\alpha\pi)}{\alpha\pi}\left(e^{-T}\int_0^{N_1h}e^{\sqrt{u}}\mathrm{d}\sqrt{u}
+\sum_{j=N_1+1}^{N_t}\frac{1}{2\sqrt{jh}}\frac{\frac{3}{2}h}{e^{\frac{1}{\kappa}(\sqrt{jh}-T)}-\frac{1}{2}e^{-(\sqrt{jh}-T)}} \right)\\
\le& 1+3 e^{\frac{1}{\kappa}T}\frac{\sin(\alpha\pi)}{\alpha\pi}\int_{N_1h}^{+\infty}e^{-\frac{1}{\kappa}\sqrt{u}}\mathrm{d}\sqrt{u}
=1+\frac{3\sin((1-\alpha)\pi)}{(1-\alpha)\pi}<4
\end{align*}
from the monotonicities of $\frac{e^{t}}{t}$ on $(0,1]$ and $[1,+\infty)$ respectively, and $\frac{e^{-\frac{1}{\kappa}t}}{t}$ on $(0,+\infty)$ together with inequality  $\frac{1}{e^{\frac{1}{\kappa}t}-\frac{1}{2}e^{-t}}\le  2e^{-\frac{1}{\kappa}t}$ for $t\ge 0$.
Then by Bernstein \cite{Bernstein1919} (also see \cite[Theorem 4.3]{Trefethen2008}), the truncated polynomial approximation $P_n$ of degree $n$ obtained as partial sums
of the Chebyshev series $r_2(x)=\sum_{k=0}^\infty b_kT_k(2x-1)$ satisfies
$$
\|r_2-P_n\|_{\infty}\le \frac{2V}{(\varrho_1-1)\varrho_1^n}.
$$
Analogous to \cite[Lemma 2.2]{Herremans2023}, it follows for $N_2\ge \frac{\sqrt{N_1h}}{\log\varrho_1}+2=\mathcal{O}(\sqrt{N_1})$ that
\begin{align}\label{polynomial app}
\|E_{PA}\|_{\infty}:=\|r_2-P_{N_2}\|_{\infty}\le \frac{2V}{(\varrho_1-1)\varrho_1^{N_2}}\le e^{-T}.
\end{align}
Consequently, it implies  that
\begin{align*}
r_{N_t}(x)=r_{N_1}(x)+P_{N_2}(x)+E_{PA}(x).
\end{align*}

Therefore together with \eqref{eq:ECrat}, the estimate \eqref{eq: brateo} in Theorem \ref{mainthm} holds if
\begin{align*}
&\frac{\sin(\alpha\pi)}{\alpha\pi}\int_0^{N_th}\frac{1}{2\sqrt{u}}
 \frac{xe^{\sqrt{u}-T}}{e^{\frac{1}{\alpha}(\sqrt{u}-T)}+x}\mathrm{d}u-
 \frac{\sin(\alpha\pi)}{\alpha\pi}h
 \sum_{j=1}^{N_t}\frac{1}{2\sqrt{jh}}
\frac{xe^{\sqrt{jh}-T}}{e^{\frac{1}{\alpha}(\sqrt{jh}-T)}+x}\\
=&e^{\sigma \sqrt{2M_0}}\mathcal{O}(e^{-\sigma\alpha\sqrt{N}})+e^{\sigma \sqrt{2M_0}}\frac{\mathcal{O}(1)}{e^{\frac{4\pi^2}{\sigma}\sqrt{N}}-1}
\end{align*}
uniformly holds for $x\in [0,1]$ and the constants in the above $\mathcal{O}$ terms are independent of $N$, $x$, $\sigma$ and $\alpha$.

Moreover, the above analysis can be directly used to  construct the LP scheme based upon  \eqref{eq:1taperedimag} and prove  \eqref{eq: brateoimag} in Theorem \ref{mainthm2}.

{\bf Proof of \eqref{eq: brateoimag} in Theorem \ref{mainthm2}}: For $|x|^{2\alpha}$ and $x\in [-1,1]$, analogously by \eqref{eq:ECrat} we have
\begin{align*}
|x|^{2\alpha}&=\left(x^2\right)^{\alpha}=\frac{\sin(\alpha\pi)}{\alpha\pi}\int_0^{+\infty} \frac{x^2}{y^{\frac{1}{\alpha}}+x^2}\mathrm{d}y
=\frac{\sin(\alpha\pi)}{\alpha\pi}\int_{-\infty}^{+\infty} \frac{x^2e^t}{e^{\frac{1}{\alpha}t}+x^2}\mathrm{d}t\\
& \approx  r_{N_t}(x^2) +E_T(x^2) \notag
\end{align*}
and
\begin{align*}
r_{N_t}(x^2)=r_{N_1}(x^2)+r_2(x^2):=\widetilde{r}_{N_1}(x)+\widetilde{P}_{N_2}(x)+\widetilde{E}_{PA}(x)
\end{align*}
with $\widetilde{P}_{N_2}(x)=P_{N_2}(x^2)$ of degree $\mathcal{O}(\sqrt{N_1})$ too and $\widetilde{E}_{PA}(x)=E_{PA}(x^2)$,
and
\begin{align*}
\widetilde{r}_{N_1}(x)=r_{N_1}(x^2)
=&\frac{\sin(\alpha\pi)}{2\alpha\pi}
\sum_{j=1}^{N_1}\sqrt{\frac{h}{j}}\frac{p_j|p_j|^\alpha}{x^2-p_j}\notag\\
=&\frac{\sin(\alpha\pi)}{2\alpha\pi}
\sum_{j=1}^{N_1}\sqrt{\frac{h}{j}}\frac{p_j|p_j|^\alpha}{2\overline{p}_j}\left\{\frac{1}{x-\overline{p}_j}-\frac{1}{x+\overline{p}_j}\right\}.
\end{align*}
Therefore, if \eqref{eq: brateo} in Theorem \ref{mainthm} holds, it directly leads to \eqref{eq: brateoimag} in Theorem \ref{mainthm2}.

\begin{remark}\label{remark1}
From the proof of \eqref{eq: brateoimag}, we see that
 the LP \eqref{LPwithimaginarypoles}
with
tapered lightning poles \eqref{eq:1taperedimag}
satisfies for $x\in [0,1]$ that
\begin{equation*}
\Big|\widetilde r_N(x)-x^{2\alpha}\Big|=e^{\sigma \sqrt{2M_0}}\cdot
\left\{\begin{array}{ll}
\mathcal{O}(e^{-\sigma\alpha\sqrt{N}}),&\sigma\le \frac{2\pi}{\sqrt{\alpha}}\\
\frac{\mathcal{O}(1)}{e^{\frac{4\pi^2}{\sigma}\sqrt{N}}-1},&\sigma> \frac{2\pi}{\sqrt{\alpha}}
\end{array}\right.
\end{equation*}
too.
Thus, the LP \eqref{LPwithimaginarypoles} for $|x|^{2\alpha}$ on $[-1,1]$ or $x^{2\alpha}$ on $x\in [0,1]$ have the same convergence rate, quite different from the projection approximation by Jacobi orthogonal expansion, where the convergence for $x^{2\alpha}$ $(x\in [0,1])$ is two times faster than that for $|x|^{2\alpha}$ $(x\in [-1,1])$  \cite{XiangLiuNumerMath2020}.
\end{remark}

\subsection{LP schemes based on uniform exponential clustering
poles \eqref{eq:uniform} and \eqref{eq:uniformgenimag_uniformal}}
Following \eqref{eq:est} and similar to \eqref{eq:ECrat},    discretization using the rectangular rule
 in $\bar{N}_t+1$ quadrature points  with step length $\hbar=\frac{\sigma\alpha}{\sqrt{N_1}}$:
\begin{align*}
T=N_1\hbar=\sigma\alpha\sqrt{N_1},\quad \overline{\mathcal{N}}_t\hbar =(\kappa+1)T,\quad \bar{N}_t={\rm ceil}(\overline{\mathcal{N}}_t+1),
 \end{align*}
 gives rise to
\begin{align}\label{eq:estunif}
x^\alpha
=&\frac{\sin(\alpha\pi)}{\alpha\pi}\int_{-T}^{\kappa T}
\frac{xe^t}{e^{\frac{1}{\alpha}t}+x}\mathrm{d}t+E^{(1)}_T(x)
=\frac{\sin(\alpha\pi)}{\alpha\pi}\int_{0}^{(\kappa+1) T}
\frac{xe^{u-T}}{e^{\frac{1}{\alpha}(u-T)}+x}\mathrm{d}u+E^{(1)}_T(x)\notag\\
=& \frac{\sin(\alpha\pi)}{\alpha\pi}\int_{0}^{\bar{N}_t\hbar}
\frac{xe^{u-T}}{e^{\frac{1}{\alpha}(u-T)}+x}\mathrm{d}u+\overline{E}_T(x)
\approx\bar{r}_{\bar N_t}(x)+\overline{E}_T(x),
\end{align}
where
\begin{align}\label{eq:rat1uniform}
\bar r_{\bar{N}_t}(x)
=&\frac{\sin(\alpha\pi)}{\alpha\pi}\hbar\sum_{j=0}^{\bar{N}_t}\frac{xe^{jh-T}}{e^{\frac{1}{\alpha}(jh-T)}+x}\notag\\
=&\frac{\sin(\alpha\pi)}{\alpha\pi}
\hbar\sum_{j=0}^{N_1-1}\frac{q_j|q_j|^\alpha}{x-q_j}
+\frac{\sin(\alpha\pi)}{\alpha\pi}\left(\sum_{j=0}^{N_1-1}|q_j|^\alpha+\hbar
\sum_{k= N_1}^{\bar{N}_t}\frac{x|q_{N_1-k}|^\alpha}{x-q_{N_1-k}}
\right)\\
=&:\bar{r}_{N_1}(x)+\bar{r}_2(x)\quad  \left(q_j=:q_{N_1-k}=-e^{-\frac{N_1-k}{\alpha}\hbar}\right)\notag
 \end{align}
and
$\|\overline{E}_T\|_{\infty}\le 3e^{-T}$ since
\begin{align*}
0\le \frac{\sin(\alpha\pi)}{\alpha\pi}\int_{(\kappa+1)T}^{\bar{N}_t\hbar}
\frac{xe^{u-T}}{e^{\frac{1}{\alpha}(u-T)}+x}\mathrm{d}u
\le \frac{\sin(\alpha\pi)}{\alpha\pi}\int_{\kappa T}^{+\infty}e^{-\frac{1}{\kappa}t}\mathrm{d}t
=\frac{\sin((1-\alpha)\pi)}{e^T(1-\alpha)\pi}\le e^{-T}.
\end{align*}

Analogous to  \eqref{polynomial app}, $\bar{r}_2(x)$ can  also be approximated by a polynomial $\bar{P}_{N_2}(x)$ of degree $N_2=\mathcal{O}(\sqrt{N_1})$, with error $\|\overline{E}_{PA}\|_{\infty}\le e^{-T}$. Thus,
it implies that
\begin{align*}
\bar{r}_{\bar{N}_t}(x)=\bar{r}_{N_1}(x)+\bar{P}_{N_2}(x)+\overline{E}_{PA}(x).
\end{align*}

Therefore together with \eqref{eq:rat1uniform}, the estimate \eqref{eq: brateounif} in Theorem \ref{mainthm} also holds if
\begin{align*}
&\frac{\sin(\alpha\pi)}{\alpha\pi}\int_0^{\bar{N}_th}
 \frac{xe^{u-T}}{e^{\frac{1}{\alpha}(u-T)}+x}\mathrm{d}u-\bar{r}_{\bar{N}_t}(x)
=\mathcal{O}(e^{-\sigma\alpha\sqrt{N}})
+\frac{\mathcal{O}(1)}{e^{\frac{2\pi^2}{\sigma}\sqrt{N}}-1}
\end{align*}
uniformly holds for $x\in [0,1]$ and the constants in the above $\mathcal{O}$ terms independent of $N$, $x$, $\sigma$ and $\alpha$.

Similarly, the analysis above can be directly used to  construct the LP scheme based upon   \eqref{eq:uniformgenimag_uniformal} and prove  \eqref{eq: brateo_uniform_imag} in Theorem \ref{mainthm2}.

{\bf Proof of \eqref{eq: brateo_uniform_imag} in Theorem \ref{mainthm2}}: Analogously for $|x|^{2\alpha}$ with $x\in [-1,1]$ we have
\begin{align}\label{eq:absint2}
|x|^{2\alpha}
 \approx  \bar{r}_{\bar{N}_t}(x^2) +\overline{E}_T(x^2)
 \end{align}
and
\begin{align*}
\bar{r}_{\bar{N}_t}(x^2)=\bar{r}_{N_1}(x^2)+\bar{r}_2(x^2):=\widehat{r}_{N_1}(x)+\widehat{P}_{N_2}(x)+\widehat{E}_{PA}(x)
\end{align*}
with $\widehat{P}_{N_2}(x)=\bar P_{N_2}(x^2)$ of degree $\mathcal{O}(\sqrt{N_1})$ too and $\widehat{E}_{PA}(x)=\overline{E}_{PA}(x^2)$,
and
\begin{align*}
\widehat{r}_{N_1}(x)=\overline{r}_{N_1}(x^2)
=&\frac{\sin(\alpha\pi)}{2\alpha\pi}
\sum_{j=1}^{N_1}\sqrt{\frac{\hbar}{j}}\frac{q_j|q_j|^\alpha}{x^2-q_j}\notag\\
=&\frac{\sin(\alpha\pi)}{2\alpha\pi}
\sum_{j=1}^{N_1}\sqrt{\frac{\hbar}{j}}\frac{q_j|q_j|^\alpha}{2\overline{q}_j}\left\{\frac{1}{x-\overline{q}_j}-\frac{1}{x+\overline{q}_j}\right\}.
\end{align*}
Thus, if \eqref{eq: brateounif} in Theorem \ref{mainthm} holds, it directly implies \eqref{eq: brateo_uniform_imag} in Theorem \ref{mainthm2}.

\begin{remark}\label{remark2}
From \eqref{eq:absint2} and Theorem \ref{mainthm2}, similar to Remark \ref{remark1}
it also follows for $x\in [0,1]$
\begin{equation*}
|\widehat{r}_N(x)-x^{2\alpha}|=\left\{\begin{array}{ll}
\mathcal{O}(e^{-\sigma\alpha\sqrt{N}}),&\sigma\le \frac{\sqrt{2}\pi}{\sqrt{\alpha}},\\
\frac{\mathcal{O}(1)}{e^{\frac{2\pi^2}{\sigma}\sqrt{N}}-1},&\sigma> \frac{\sqrt{2}\pi}{\sqrt{\alpha}}.
\end{array}\right.
\end{equation*}
\end{remark}

For the sake of narration, we introduce the following notations:
\begin{align}
&f(u,x)=\frac{\sin(\alpha\pi)}{\alpha\pi}
\frac{1}{2\sqrt{u}}\frac{xe^{\sqrt{u}-T}}
{e^{\frac{1}{\alpha}(\sqrt{u}-T)}+x},\quad \quad\quad\,\,\,\,\,\,\overline{f}(u,x)=\frac{\sin(\alpha\pi)}{\alpha\pi}
\frac{xe^{u-T}}
{e^{\frac{1}{\alpha}(u-T)}+x},\label{eq:fun}\\
&I(x)=\frac{\sin(\alpha\pi)}{\alpha\pi}\int_0^{N_th}\frac{1}{2\sqrt{u}}
\frac{xe^{\sqrt{u}-T}}{e^{\frac{1}{\alpha}(\sqrt{u}-T)}+x}\mathrm{d}u,\quad \overline{I}(x)=\frac{\sin(\alpha\pi)}{\alpha\pi}\int_0^{\bar{N}_t\hbar}
\frac{xe^{u-T}}{e^{\frac{1}{\alpha}(u-T)}+x}\mathrm{d}u,\label{eq:quadrature}\\
&S(x)=h\sum_{j=1}^{N_t}f(jh,x)(=r_{N_t}(x)),\quad\quad\quad\quad\quad\quad\,\, \overline{S}(x)=\hbar\sum_{j=1}^{\bar{N}_t}\overline{f}(j\hbar,x)(=\bar{r}_{\bar{N}_t}(x)).\label{trapzoid_S(x)}
\end{align}

In the following, we shall focus on the proof of Theorem \ref{mainthm} mainly based on the following estimates of  quadrature errors
\begin{align}
I(x)-S(x)=&e^{\sigma \sqrt{2M_0}}\mathcal{O}(e^{-\sigma\alpha\sqrt{N_1}})+e^{\sigma \sqrt{2M_0}}\frac{\mathcal{O}(1)}{e^{\frac{4\pi^2}{\sigma}\sqrt{N_1}}-1},\label{eq:errs}\\
\overline{I}(x)-\overline{S}(x)=&\mathcal{O}(e^{-\sigma\alpha\sqrt{N_1}})
+\frac{\mathcal{O}(1)}{e^{\frac{2\pi^2}{\sigma}\sqrt{N_1}}-1},\label{eq:errs1}
\end{align}
where $N_1$ in \eqref{eq:errs} and \eqref{eq:errs1} can be replaced by $N$ due to $N_2=\mathcal{O}(\sqrt{N_1})$. See Figs. \ref{quadratureerr_tapered} and \ref{quadratureerr_uni}, and refer to Section \ref{sec:4} for more details.

\begin{figure}[htbp]
\centerline{\includegraphics[height=6cm,width=16cm]{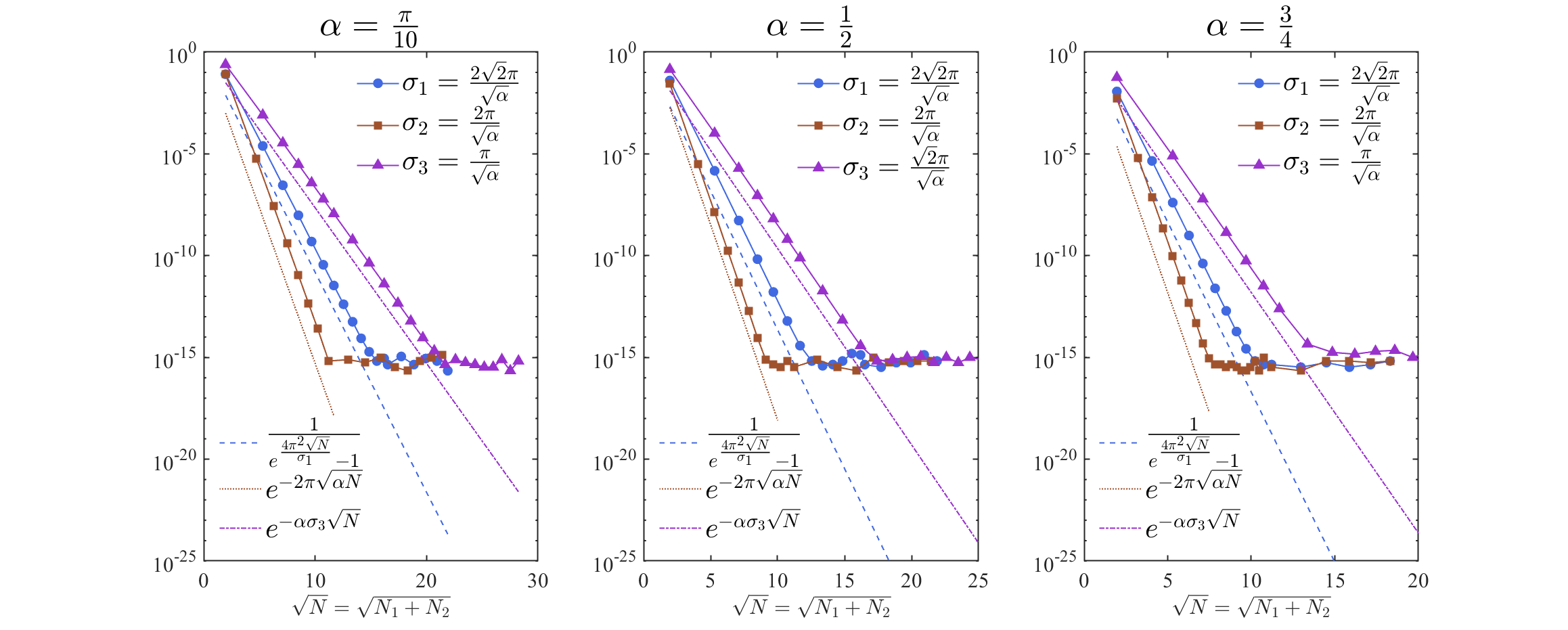}}
\caption{Quadrature errors $\|I(x)-S(x)\|_{\infty}$ of the rectangular rules for $I(x)$ with various values of $\alpha$ and $h=8\pi^2\alpha$ $(\sigma_1=\frac{2\sqrt{2}\pi}{\sqrt{\alpha}})$, $h=4\pi^2\alpha$ $(\sigma_2=\frac{4\pi}{\sqrt{\alpha}})$, $h=\pi^2\alpha$ $(\sigma_3=\frac{\pi}{\sqrt{\alpha}})$, where $N=N_1+N_2$ and $N_1=10:10:500$, $N_2={\rm ceil}(1.3\sqrt{N_1})$.}
\label{quadratureerr_tapered}
\end{figure}

\begin{figure}[htbp]
\centerline{\includegraphics[height=6cm,width=16cm]{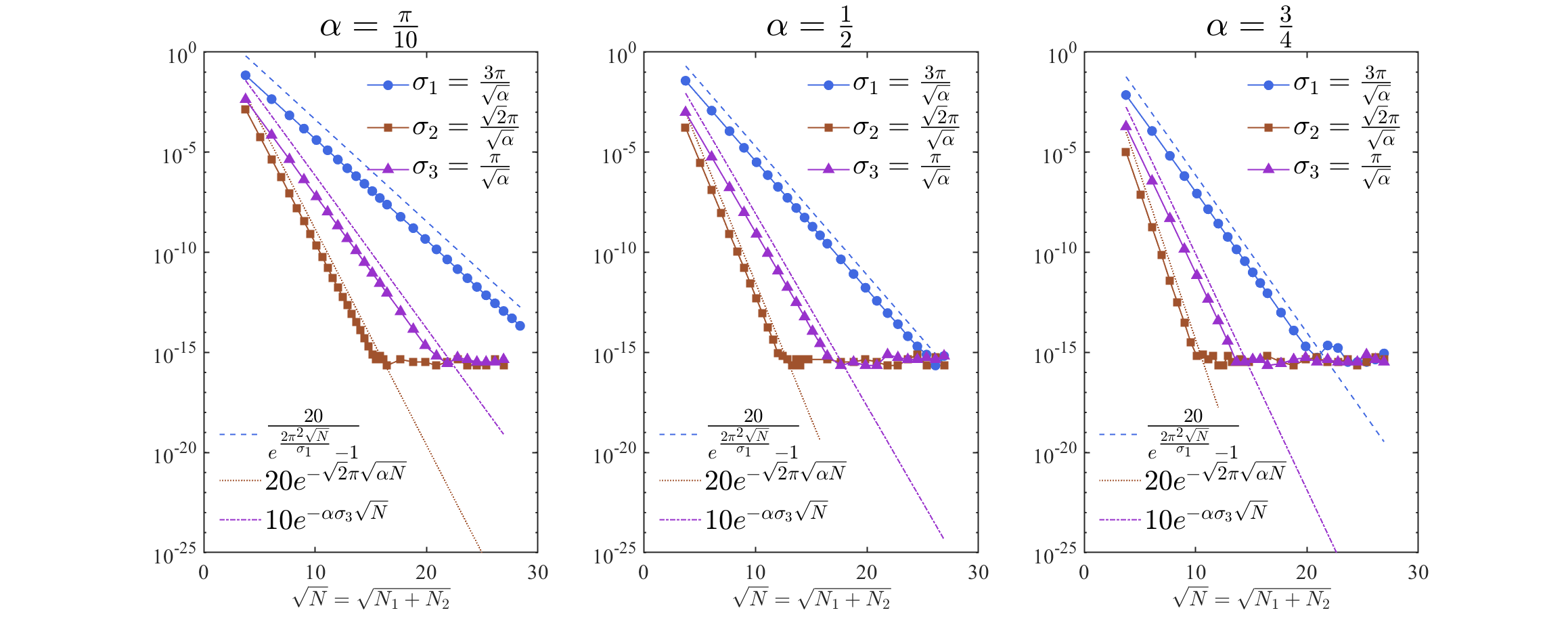}}
\caption{Quadrature errors $\|\overline{I}(x)-\overline{S}(x)\|_{\infty}$ of the rectangular rules for $\overline{I}(x)$ with various values of $\alpha$ and $\hbar=\frac{3\sqrt{\alpha}\pi}{\sqrt{N_1}}$ $(\sigma_1=\frac{3\pi}{\sqrt{\alpha}})$, $\hbar=\frac{\sqrt{2\alpha}\pi}{\sqrt{N_1}}$ $(\sigma_2=\frac{\sqrt{2}\pi}{\sqrt{\alpha}})$, $\hbar=\frac{\sqrt{\alpha}\pi}{\sqrt{N_1}}$ $(\sigma_3=\frac{\pi}{\sqrt{\alpha}})$, where $N=N_1+N_2$ and $N_1=10:10:500$, $N_2={\rm ceil}(1.3\sqrt{N_1})$.}
\label{quadratureerr_uni}
\end{figure}

\section{Possion's summation formula and quadrature errors}\label{sec:3}
In order to get the exponential convergence rates of the quadrature errors \eqref{eq:errs} and \eqref{eq:errs1}, along the way on
rectangular rule for integrals over the real line \cite[Sect. 5]{Trefethen2014SIREV},
it is necessary to introduce Poisson's summation formula (cf. \cite[(10.6-21)]{Henrici} and \cite[Theorem 1.3.1]{Stenger}).

\begin{theorem}\cite[Theorem 1.3.1]{Stenger}\label{StengerPossionFormula}
Let $w\in L^2(\mathbb{R})$ and let $w$ and its Fourier transform $\mathfrak{F}[w]$ for $t$ and $u$ in $\mathbb{R}$, satisfy the
conditions
$$
w(u)=\lim_{t\rightarrow 0^+}\frac{w(u-t)+w(u+t)}{2},\quad \mathfrak{F}[w](u)=\lim_{t\rightarrow 0^+}\frac{\mathfrak{F}[w](u-t)+\mathfrak{F}[w](u+t)}{2}.
$$
Then, for all $h > 0$,
\begin{align}\label{Possionsummationformula}
h\sum_{n=-\infty}^{+\infty}w(nh)e^{inh\mu}= \mathfrak{F}[w]\left(\frac{2n\pi}{h}+\mu\right).
\end{align}
\end{theorem}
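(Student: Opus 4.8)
The final statement is the Poisson summation formula (Theorem \ref{StengerPossionFormula}), so the "proof" here is really a plan for establishing that classical identity in the form stated.

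\medskip

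The plan is to derive \eqref{Possionsummationformula} by periodizing $w$ and expanding the resulting periodic function in a Fourier series, the standard route to Poisson summation. First I would fix $h>0$ and introduce the $2\pi/h$-periodic function (in the variable $\mu$) obtained by summing the exponentials, i.e. consider $g(\mu)=h\sum_{n\in\mathbb Z} w(nh)e^{inh\mu}$, and separately the periodization on the Fourier side, $G(\mu)=\sum_{m\in\mathbb Z}\mathfrak F[w]\!\left(\mu+\tfrac{2m\pi}{h}\right)$. The goal is to show $g(\mu)=G(\mu)$ for all $\mu$; the stated identity \eqref{Possionsummationformula} is then the special case where one reads off the $n=0$ term after noting the left side is $2\pi/h$-periodic (the indexing in the displayed equation should be read with the understanding that both sides are periodized/summed appropriately). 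Since $w\in L^2(\mathbb R)$, $\mathfrak F[w]\in L^2(\mathbb R)$ as well, and the averaging hypotheses at jump points guarantee that the pointwise values of $w$ and $\mathfrak F[w]$ agree with the symmetric limits, which is exactly what is needed for the Fourier inversion steps to hold pointwise rather than merely a.e.

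\medskip

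The key steps, in order, would be: (1) show $G$ is a well-defined $2\pi/h$-periodic function in a suitable sense (e.g. its Fourier coefficients exist) — this uses $\mathfrak F[w]\in L^2$ plus the integrability/decay implicit in the setting so that the periodized series converges; (2) compute the Fourier coefficients of $G$ with respect to the period $2\pi/h$: unfolding the periodization and using the inverse Fourier transform formula gives $\widehat G_n = h\,w(nh)$ up to the normalization convention, where here the symmetric-limit hypothesis on $w$ is what pins down $w(nh)$ exactly; (3) invoke the Fourier series representation of $G$ under the symmetric-limit hypothesis on $\mathfrak F[w]$ to conclude $G(\mu)=\sum_n \widehat G_n e^{in h\mu}=h\sum_n w(nh)e^{inh\mu}=g(\mu)$; (4) rearrange to display the identity in the form \eqref{Possionsummationformula}. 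Throughout, one cites the standard convergence theory for Fourier series of $L^2$ (or piecewise-smooth) functions to justify pointwise equality at the given points.

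\medskip

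The main obstacle is purely the analytic bookkeeping around convergence: making sure that the periodization $\sum_{m} \mathfrak F[w](\mu+2m\pi/h)$ converges in an appropriate sense and that its Fourier series converges back to it pointwise, rather than only in $L^2$. For a generic $w\in L^2(\mathbb R)$ this can fail, which is precisely why the theorem carries the two symmetric-limit normalization hypotheses and why it is typically invoked (as in this paper) with the extra structure that the relevant $w$ decays and is smooth enough — in the applications $w$ will be analytic in a strip with integrable traces, so the Fourier transform decays exponentially and all the interchanges of summation and integration are trivially justified. Since Theorem \ref{StengerPossionFormula} is quoted verbatim from \cite[Theorem 1.3.1]{Stenger}, the honest "proof proposal" is to refer to that source for the delicate convergence argument and to record here only the periodization-plus-Fourier-series skeleton above, which is all that is needed downstream when we apply it to the explicitly well-behaved integrands $f$ and $\overline f$ of \eqref{eq:fun}.
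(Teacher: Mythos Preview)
Your proposal is fine, but note that the paper does not prove this theorem at all: Theorem \ref{StengerPossionFormula} is simply quoted from \cite[Theorem 1.3.1]{Stenger} and used as a black box, with no argument supplied. Your periodization-plus-Fourier-series sketch is the standard proof and is correct in outline; you yourself observe at the end that the honest move is to cite Stenger, which is exactly what the paper does.
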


From \eqref{Possionsummationformula} with $\mu=0$ and by $\mathfrak{F}[w]\left(0\right)=\int_{-\infty}^{+\infty}w(u)\mathrm{d}u$, it follows
\begin{align}\label{QuadratureErrorfor_w}
E^{w}_{Q}:=\int_{-\infty}^{+\infty}w(u)\mathrm{d}u
-h\sum_{j=-\infty}^{+\infty}w(jh)
=-\sum_{n\not=0}\mathfrak{F}[w]\left(\frac{2n\pi}{h}\right).
\end{align}

To establish \eqref{eq:errs} and
\eqref{eq:errs1},
in the following we are concerned with the decay asymptotics of
\begin{align}\label{eq:ffft}
\mathfrak{F}[f]\left(\frac{2n\pi}{h}\right)=\int_{-\infty}^{+\infty}f(u,x)e^{-\frac{2n\pi i u}{h}} \mathrm{d}u, \quad
\mathfrak{F}[\bar{f}]\left(\frac{2n\pi}{h}\right)=\int_{-\infty}^{+\infty}\bar {f}(u,x)e^{-\frac{2n\pi i u}{h}} \mathrm{d}u.
\end{align}
However, it is worth noticing  that  the integrand $f(u,x)=\frac{1}{2\sqrt{u}}\frac{xe^{\sqrt{u}-T}}{e^{\frac{1}{\alpha}(\sqrt{u}-T)}+x} \not\in L^2(\mathbb{R_+})$ for arbitrary fixed $x\in (0,1]$,    then we define
\begin{align}\label{eq:extension_of_f}
\hat f(u,x)=
\begin{cases}
f(u,x), & \Re(u)\ge h,\\
f(h+i\Im(u),x), & -h\le \Re(u)\le h,\\
f(-u,x), & \Re(u)\le-h,
\end{cases}
\end{align}
instead of $f$ in \eqref{eq:ffft}.
For the special case $x=0$, it is obvious that $I(0)-S(0)=0$ and $\overline{I}(0)-\overline{S}(0)=0$.

\bigskip
For readability, we firstly prove \eqref{eq:errs1}  then \eqref{eq:errs} since the proof of \eqref{eq:errs} is much more  complicated than \eqref{eq:errs1}.

\subsection{Quadrature errors with uniform exponentially clustered poles 
\eqref{eq:uniform}}\label{subsec:3.01}

\begin{theorem}\label{Quadratrue_rat_uniform}
Let $\overline{f}(u,x)$ be defined in \eqref{eq:fun} with $u\in\mathbb{R}$ and $x\in(0,1]$.
Then the summation of discrete Fourier transform decays at an exponential rate
\begin{align}\label{eq:conclusionOfFouriersum_uniform}
\sum_{n\ne0}\mathfrak{F}[\overline{f}]\big{(}\frac{2 n\pi }{\hbar}\big{)}
= \frac{\mathcal{O}(1)}{e^{\frac{2\pi^2\alpha}{\hbar}}-1},
\end{align}
and the constant in the $\mathcal{O}$ term \eqref{eq:conclusionOfFouriersum_uniform} is independent of $n$, $\hbar$, $x$, $\alpha$ and $\sigma$.
\end{theorem}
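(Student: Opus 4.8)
The plan is to compute the Fourier transform $\mathfrak{F}[\overline f](\xi)$ explicitly (or at least to bound it sharply) by contour deformation, and then to sum the resulting bounds over $n\neq 0$. Recall from \eqref{eq:fun} that, up to the harmless constant $\frac{\sin(\alpha\pi)}{\alpha\pi}\le 1$, we have $\overline f(u,x)=\frac{xe^{u-T}}{e^{(u-T)/\alpha}+x}$. The substitution $s=u-T$ turns this into $e^{-2n\pi i T/\hbar}\int_{\mathbb{R}}\frac{xe^{s}}{e^{s/\alpha}+x}e^{-2n\pi i s/\hbar}\,\mathrm{d}s$, and the scaling $s=\alpha\tau$ gives a transform of the fixed profile $g(\tau)=\frac{xe^{\alpha\tau}}{e^{\tau}+x}$, evaluated at frequency $\tfrac{2\pi n\alpha}{\hbar}$. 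So everything reduces to understanding $\widehat g$ for this one model function, whose only singularities in the complex $\tau$-plane are simple poles where $e^{\tau}=-x$, i.e.\ at $\tau=\log x+i\pi(2k+1)$, $k\in\mathbb{Z}$. The nearest poles to the real axis lie at distance exactly $\pi$ (the strip of analyticity of $g$ has half-width $d=\pi$), which is precisely what will produce the factor $e^{-\frac{2\pi^2\alpha}{\hbar}}$ after undoing the $\alpha$-scaling ($d$ in the $s$-variable is $\alpha\pi$, and $\tfrac{2\pi}{\hbar}\cdot\alpha\pi=\tfrac{2\pi^2\alpha}{\hbar}$).

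The key steps, in order, are as follows. First, I would record that $g(\tau)=\frac{xe^{\alpha\tau}}{e^\tau+x}$ is analytic in the strip $|\Im\tau|<\pi$, decays like $e^{\alpha\Re\tau}$ as $\Re\tau\to-\infty$ and like $e^{(\alpha-1)\Re\tau}$ as $\Re\tau\to+\infty$ (using $0<\alpha<1$), hence is integrable on every horizontal line $\Im\tau=b$ with $|b|<\pi$, with an $L^1$ bound on that line that is uniform in $x\in(0,1]$ and in $b$ bounded away from $\pm\pi$ — this is the same type of computation already used in the excerpt to bound $r_2$ and the truncation errors. Second, for $n>0$ I would shift the contour of $\int_{\mathbb{R}}g(\tau)e^{-i\lambda\tau}\,\mathrm{d}\tau$ (with $\lambda=\tfrac{2\pi n\alpha}{\hbar}>0$) downward to $\Im\tau=-b$ for any $0<b<\pi$, picking up residues at the poles $\tau=\log x-i\pi$ only when $b$ crosses $\pi$; keeping $b<\pi$ and letting $b\uparrow\pi$ gives $|\widehat g(\lambda)|\le M e^{-b\lambda}$, hence $|\widehat g(\lambda)|\le M e^{-\pi\lambda}$ in the limit (the vertical segments at $\Re\tau=\pm R$ vanish as $R\to\infty$ by the decay just noted), with $M$ the uniform $L^1$ bound; for $n<0$ one shifts upward symmetrically. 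Third, I would translate this back: $\big|\mathfrak{F}[\overline f]\big(\tfrac{2\pi n}{\hbar}\big)\big|\le M\,\alpha\, e^{-\pi\alpha\cdot \frac{2\pi|n|}{\hbar}}=M\alpha\, e^{-\frac{2\pi^2\alpha}{\hbar}|n|}$ (the $|e^{-2\pi i nT/\hbar}|=1$ phase drops out, and the Jacobian $\alpha$ from $s=\alpha\tau$ is absorbed into $\mathcal{O}(1)$ since $\alpha\le 1$). Fourth and last, I would sum the geometric series: $\sum_{n\neq 0}\big|\mathfrak{F}[\overline f]\big(\tfrac{2\pi n}{\hbar}\big)\big|\le 2M\alpha\sum_{n\ge1}e^{-\frac{2\pi^2\alpha}{\hbar}n}=\frac{2M\alpha\, e^{-2\pi^2\alpha/\hbar}}{1-e^{-2\pi^2\alpha/\hbar}}=\frac{2M\alpha}{e^{2\pi^2\alpha/\hbar}-1}=\frac{\mathcal{O}(1)}{e^{2\pi^2\alpha/\hbar}-1}$, which is exactly \eqref{eq:conclusionOfFouriersum_uniform}, with a constant depending only on the absolute constant $M$ (in particular independent of $n$, $\hbar$, $x$, $\alpha$, $\sigma$).

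The main obstacle is the second step — making the contour shift rigorous uniformly in $x\in(0,1]$. As $x\to0^+$ the relevant pole $\tau=\log x-i\pi$ moves off to $\Re\tau\to-\infty$, so it never actually obstructs the shift, and in fact the profile $g$ degenerates; one must check that the $L^1$-on-a-line bound $M$ does not blow up in this limit, and likewise that the estimate is not spoiled when $\alpha\to0$ or $\alpha\to1$ (where the decay at $+\infty$ becomes slow). Both are handled by the elementary inequalities $\frac{x e^{\alpha\tau}}{e^{\tau}+x}\le \min\{e^{\alpha\tau},\, x^{\alpha}e^{(\alpha-1)\tau}\}\le\min\{e^{\alpha\tau}, e^{(\alpha-1)\tau}\}$ on the line $\Im\tau=0$ and their analogues on $\Im\tau=-b$ (where $|e^\tau+x|\ge \sin\!\big(\tfrac{\pi-b}{1}\big)\cdot\!\max\{e^{\Re\tau},x\}$ up to a $b$-dependent but, for $b$ away from $\pi$, bounded constant), so that $\int_{\mathbb{R}}\frac{1}{|{\rm line}|}\le \frac1\alpha+\frac1{1-\alpha}$ — but this is multiplied by the extra $\alpha$ from the Jacobian and, after the sum, by nothing worse, so the $\alpha$'s cancel; the slightly delicate point is that near $b=\pi$ the constant $\sin(\pi-b)^{-1}$ diverges, which is why one keeps $b<\pi$ strictly and only passes to the limit in the final exponential rate, not inside the $L^1$ bound. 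A clean way to organize this is to fix once and for all, say, $b=\pi-\tfrac12\min\{\pi,\hbar/(2\pi n)\}$ or simply to prove the bound with $e^{-(\pi-\epsilon)\lambda}$ for every $\epsilon>0$ and then optimize; since only the summed rate matters, any such bookkeeping suffices. Everything else — the vanishing of the vertical segments, the residue count, the geometric summation — is routine.
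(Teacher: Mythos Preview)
Your overall plan---reduce to the model profile $g(\tau)=\frac{xe^{\alpha\tau}}{e^{\tau}+x}$, exploit that its poles sit at distance $\pi$ from the real axis, shift the contour, and sum a geometric series---is exactly the right shape, and it matches the paper's strategy in spirit. But there is a genuine gap in your second step, and it is not just bookkeeping.

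If you shift only to $\Im\tau=-b$ with $b<\pi$, you get $|\widehat g(\lambda)|\le M(b)\,e^{-b\lambda}$ where $M(b)\sim c/(\pi-b)$ blows up as $b\uparrow\pi$ (your own estimate $|e^{\tau-ib}+x|\gtrsim \sin(\pi-b)\max\{e^{\Re\tau},x\}$ makes this explicit). You cannot pass to the limit $b\to\pi$ with $M$ fixed. Your proposed cure, choosing $b_n=\pi-c/\lambda_n$ with $\lambda_n=\frac{2\pi n\alpha}{\hbar}$, yields $|\widehat g(\lambda_n)|\le C\lambda_n e^{-\pi\lambda_n}$, and after summing,
\[
\sum_{n\ge1}\lambda_n e^{-\pi\lambda_n}
=\frac{2\pi\alpha}{\hbar}\cdot\frac{q}{(1-q)^2},\qquad q=e^{-2\pi^2\alpha/\hbar}.
\]
Compared with the target $\frac{q}{1-q}$, this carries an extra factor $\frac{2\pi\alpha}{\hbar(1-q)}$, which tends to $+\infty$ as $\hbar\to0$ (the relevant regime, since $\hbar=\sigma\alpha/\sqrt{N_1}$). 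So the constant is \emph{not} independent of $\hbar$, and the theorem as stated is not proved.

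The paper avoids this by shifting \emph{past} the nearest pole, to the line $\Im u=-\tfrac{3}{2}\alpha\pi$ (i.e.\ $\Im\tau=-\tfrac{3}{2}\pi$ in your variable), and isolating the pole with a small circle. The line integral then contributes $\mathcal{O}(1)e^{-3n\alpha\pi^2/\hbar}$ with an absolute constant (the $L^1$ bound on $\Im\tau=-\tfrac{3}{2}\pi$ is finite and uniform, since that line is a fixed distance from all poles), while the circle integral converges to $2\pi i$ times the residue and gives $\mathcal{O}(1)x^{\alpha}e^{-2n\alpha\pi^2/\hbar}$. Both constants are absolute, and the geometric sum is then clean. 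Equivalently---and this would be the shortest route for your write-up---one can compute $\widehat g$ in closed form:
\[
\widehat g(\lambda)=x^{\alpha-i\lambda}\,\frac{\pi}{\sin\big((\alpha-i\lambda)\pi\big)},\qquad
|\widehat g(\lambda)|=\frac{\pi x^{\alpha}}{\sqrt{\sinh^2(\pi\lambda)+\sin^2(\alpha\pi)}}\le \frac{2\pi x^{\alpha}}{e^{\pi\lambda}-e^{-\pi\lambda}},
\]
which gives the sharp $e^{-\pi\lambda}$ decay with an absolute constant directly. Either fix closes the gap; staying strictly inside the strip does not.
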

\begin{proof}
From the definition \eqref{eq:fun} and inequalities in \eqref{eq:inequ}, it is easy to verify that
$\overline{f}\in L^2(\mathbb{R})\bigcap C(\mathbb{R})$.
Moreover,
we check readily that
\begin{align*}
\int_{-\infty}^{+\infty}|\overline{f}(u,x)|\mathrm{d}u
=& \frac{\sin(\alpha\pi)}{\alpha\pi}\left\{\int_{-\infty}^0 +\int_{0}^{+\infty}\right\}\frac{xe^t}{e^{\frac{1}{\alpha}t}+x}\mathrm{d}t\\
\le&\frac{\sin(\alpha\pi)}{\alpha\pi}\left\{\int_{-\infty}^0e^t\mathrm{d}t+\int_{0}^{+\infty}e^{-\frac{1}{\kappa}t}\mathrm{d}t\right\}<2
\end{align*}
holds uniformly for all $x\in(0,1]$ and $0<\alpha<1$, thus the Fourier transform
$$\mathfrak{F}\left[\overline{f}(u,x)\right]=\int_{-\infty}^{+\infty}\overline{f}(u,x)e^{-i\xi u}\mathrm{d}u$$
exists and is continuous for all real $\xi$ \cite[(10.6-12)-(10.6-13)]{Henrici}.
Then $\overline{f}(u,x)$ and $\mathfrak{F}\left[\overline{f}(u,x)\right]$ satisfy the conditions of Theorem \ref{StengerPossionFormula}.

Furthermore, it is obvious that $\overline{f}(u,x)=\frac{\sin(\alpha\pi)}{\alpha\pi}\frac{xe^{u-T}}{e^{\frac{u-T}{\alpha}}+x}$ takes the simple poles
$$\dot{u}_k=T+\alpha\log{x}+i(2k-1)\alpha\pi,\ k=0,\pm1,\pm2,\cdots,$$
among which the closest poles to the real axis are
\begin{align}\label{Dot_u0_u1}
\dot{u}_0=T+\alpha\log{x}-i\alpha\pi \text{ and } \dot{u}_1=T+\alpha\log{x}+i\alpha\pi.
\end{align}
Thus $\overline{f}(u,x)$ is holomorphic in the strip domain
$$\left\{u\in\mathbb{C}:\ |\Im(u)|<\alpha\pi\right\}$$
with the simple poles $\dot{u}_0$ and $\dot{u}_1$ located on the lower and upper boundaries.
In addition, the second nearest poles of $\overline{f}(u,x)$ to the real line are $\dot{u}_{-1}$ and $\dot{u}_2$, with the same distance $3\alpha\pi$, which implies that $\overline{f}(u,x)$ is holomorphic in the strip domain
$\{u\in\mathbb{C}:\ |\Im(u)|\le\frac{3}{2}\alpha\pi\}$ except for the simple poles $\dot{u}_0$ and $\dot{u}_1$
\eqref{Dot_u0_u1} lying interior.

In particular, notice that \begin{align*}
\lim_{A\rightarrow+\infty}\left(
\int_{-A}^{-A-\frac{3}{2}i\alpha\pi}+\int_{A}^{A-\frac{3}{2}i\alpha\pi}\right)
\overline{f}(u,x)e^{-i\frac{2n\pi}{\hbar}u}\mathrm{d}u
=0
\end{align*}
since
\begin{align*}
\left|\frac{xe^{-A-T-it}}{e^{\frac{1}{\alpha}(-A-T-it)}+x}\right|
\le\frac{xe^{-A-T}}{x-e^{\frac{1}{\alpha}(-A-T)}}
\le\frac{xe^{-A-T}}{x/2}=2e^{-A-T}
\end{align*}
holds for sufficiently large $A$ and arbitrary $x>0$, and
\begin{align*}
\left|\frac{xe^{A-T-it}}{e^{\frac{1}{\alpha}(A-T-it)}+x}\right|
\le\frac{1}{e^{\frac{1}{\kappa}(A-T)}-e^{-(A-T)}}
\le\frac{1}{e^{\frac{1}{\kappa}(A-T)}-1}
\end{align*}
holds for $A>T$, which implies that
\begin{align*}
&\left|\left(
\int_{-A}^{-A-\frac{3}{2}i\alpha\pi}+\int_{A}^{A-\frac{3}{2}i\alpha\pi}\right)
\overline{f}(u,x)e^{-i\frac{2n\pi}{\hbar}u}\mathrm{d}u\right|\\
\le&\left[2e^{-A-T}+\frac{1}{e^{\frac{1}{\kappa}(A-T)}-1}\right]\int_{0}^{\frac{3}{2}\alpha\pi}e^{-\frac{2n\pi}{\hbar}t}\mathrm{d}t.
\end{align*}

\begin{figure}[htp]
\centerline{\includegraphics[width=12cm]{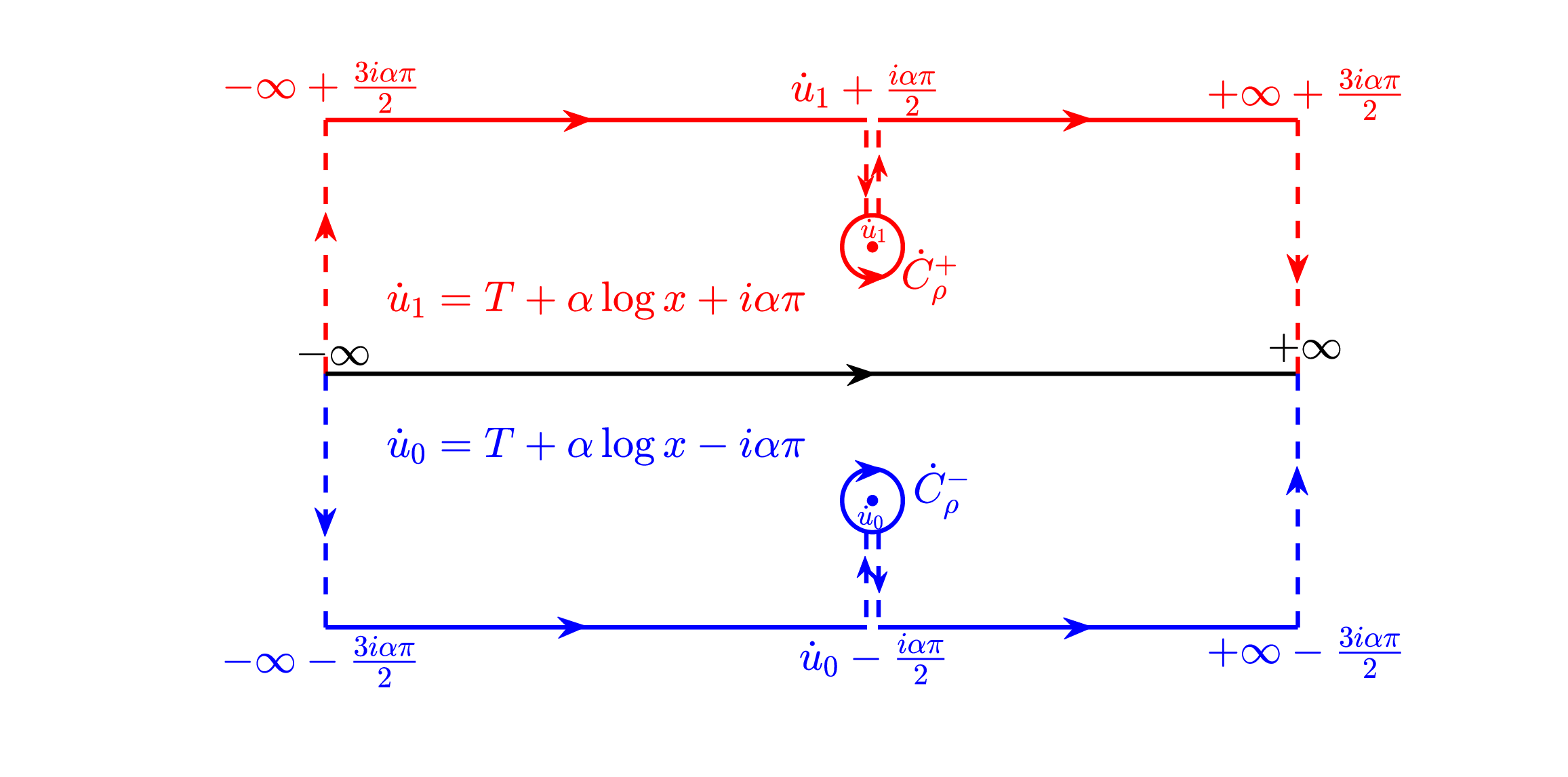}}
\caption{The contours for integrals in \eqref{ConvertIntegralToBrokenLine_Uniform}. The poles nearest to the real line of $\overline{f}(u,x)$ are $\dot{u}_0$ and $\dot{u}_1$.}
\label{integral_contour_uniform}
\end{figure}

Let $\dot{C}^-_{\rho}$  denote the circle $\{u=\dot{u}_0+\rho e^{i\theta}:\ 0\le\theta<2\pi\}$\footnote{The vary range of $\vartheta$ for $\dot{C}_{\rho}^{\pm}$ in fact is $\pm\frac{\pi}{2}\rightarrow\pm\frac{5\pi}{2}$ , it does not matter to write it as $0\rightarrow\pm2\pi$ since the invariance of the integral $\int_{\dot{C}_{\rho}^{\pm}}\overline{f}(u,z)e^{\pm i\frac{2n\pi}{\hbar}u}\mathrm{d}u$.}, where
$\rho=\min\{\frac{\alpha\pi}{2},\frac{1}{N_0}\}$ with $N_0$ being some sufficiently large integer  (see Fig. \ref{integral_contour_uniform}). Then with the aid of the Cauchy's integral Theorem and the integrals along the vertical lines $\dot{u}_0-\frac{1}{2}i\alpha\pi\rightleftharpoons \dot{C}_{\rho}^-$ (not include that on the circle $\dot{C}_{\rho}^-$) cancelling each other out,  the $n$-th $(n\ge1)$ discrete Fourier transform that can be evaluated by
\begin{align}
  \mathfrak{F}\left(\frac{2n\pi}{\hbar}\right)
  =&\int_{-\infty}^{+\infty}\overline{f}(u,x)e^{-i\frac{2n\pi}{\hbar}u}\mathrm{d}u\notag\\
  =&\int_{-\infty-\frac{3}{2}i\alpha\pi}^{+\infty-\frac{3}{2}i\alpha\pi}
  \overline{f}(u,x)e^{-i\frac{2n\pi}{\hbar}u}\mathrm{d}u
  +\int_{\dot{C}^-_{\rho}}\overline{f}(u,x)e^{-i\frac{2n\pi}{\hbar}u}\mathrm{d}u,\label{ConvertIntegralToBrokenLine_Uniform}
\end{align}
where we used the Cauchy's integral theorem to convert the integral on the real axis to the broken line (see Fig. \ref{integral_contour_uniform})
$$
-\infty\rightarrow-\infty-\frac{3}{2}i\alpha\pi\rightarrow \dot{u}_0-\frac{1}{2}i\alpha\pi\rightarrow\dot{C}^-_{\rho}
\rightarrow\dot{u}_0-\frac{1}{2}i\alpha\pi\rightarrow+\infty-\frac{3}{2}i\alpha\pi\rightarrow+\infty.
$$

In addition, the first integral on RHS of \eqref{ConvertIntegralToBrokenLine_Uniform} can be bounded by
\begin{align*}
  &\left|\int_{-\infty-\frac{3}{2}i\alpha\pi}^{+\infty-\frac{3}{2}i\alpha\pi}
  \overline{f}(u,x)e^{-i\frac{2n\pi}{\hbar}u}\mathrm{d}u\right|
  \le e^{-\frac{3n\alpha\pi^2}{\hbar}}\int_{-\infty}^{+\infty}
  \left|\overline{f}(t-\frac{3}{2}i\alpha\pi,x)\right|\mathrm{d}u\\
  =&\frac{e^{-\frac{3n\alpha\pi^2}{\hbar}}\sin(\alpha\pi)}{\alpha\pi}\int_{-\infty}^{+\infty}
  \left|\frac{xe^{t-\frac{3}{2}i\alpha\pi-T}}{e^{\frac{1}{\alpha}(t-\frac{3}{2}i\alpha\pi-T)}+x}\right|\mathrm{d}u\notag\\
  =&\frac{e^{-\frac{3n\alpha\pi^2}{\hbar}}\sin(\alpha\pi)}{\alpha\pi}\int_{-\infty}^{+\infty}
  \left|\frac{xe^{t-T}}{ie^{\frac{1}{\alpha}(t-T)}+x}\right|\mathrm{d}u\\
  \le&\frac{e^{-\frac{3n\alpha\pi^2}{\hbar}}\sin(\alpha\pi)}{\alpha\pi}
  \left(\int_{-\infty}^{T}e^{t-T}\mathrm{d}t+\int_{T}^{+\infty}e^{\frac{\alpha-1}{\alpha}(t-T)}\mathrm{d}t\right)\notag\\
  =&\frac{e^{-\frac{3n\alpha\pi^2}{\hbar}}\sin(\alpha\pi)}{\alpha\pi}\left(1+\frac{\alpha}{1-\alpha}\right)\\
  =&\frac{e^{-\frac{3n\alpha\pi^2}{\hbar}}\sin(\alpha\pi)}{(1-\alpha)\pi}
  =e^{-\frac{3n\alpha\pi^2}{\hbar}}\mathcal{O}(1)
\end{align*}
and the second integral on RHS of \eqref{ConvertIntegralToBrokenLine_Uniform} by
\begin{align*}
  &\left|\int_{\dot{C}^-_{\rho}}\overline{f}(u,x)e^{-i\frac{2n\pi}{\hbar}u}\mathrm{d}u\right|
  =\frac{\sin(\alpha\pi)}{\alpha\pi}\left|\int_{\dot{C}^-_{\rho}}
\frac{x^{\alpha}e^{u-\dot{u}_0}}{e^{\frac{1}{\alpha}(u-\dot{u}_0)}-1}e^{-i\frac{2n\pi}{\hbar}u}\mathrm{d}u\right|\\
=&\frac{e^{-\frac{2n\pi}{\hbar}(\alpha\pi-\rho\sin{\theta})}\sin(\alpha\pi)}{\alpha\pi}
\int_{0}^{2\pi}\left|\frac{x^{\alpha}e^{\rho e^{i\theta}}}{e^{\frac{\rho}{\alpha}e^{i\theta}}-1}i\rho e^{i\theta}\right|\mathrm{d}\theta\\
\le&\frac{x^{\alpha}e^{1-\frac{2n\pi}{\hbar}(\alpha\pi-\rho\sin{\theta})}\sin(\alpha\pi)}{\pi}
\int_{0}^{2\pi}\left|\frac{\frac{\rho}{\alpha}e^{i\theta}}{e^{\frac{\rho}{\alpha}e^{i\theta}}-1}\right|\mathrm{d}\theta\\
=&\mathcal{O}(1)x^{\alpha}e^{-\frac{2n\pi}{\hbar}(\alpha\pi-\rho)}.
\end{align*}
Therefore, it follows
\begin{align*}
  \mathfrak{F}\left(\frac{2n\pi}{\hbar}\right)
  =e^{-\frac{3n\alpha\pi^2}{\hbar}}\mathcal{O}(1)
  +\mathcal{O}(1)x^{\alpha}e^{-\frac{2n\pi}{\hbar}(\alpha\pi-\rho)},\ n=1,2,\cdots
\end{align*}
and letting $\rho\rightarrow 0$, it yields \eqref{eq:conclusionOfFouriersum_uniform} for $n>0$

Similarly, by the same approach with the broken line replaced by
$$
-\infty\rightarrow-\infty+\frac{3}{2}i\alpha\pi\rightarrow \dot{u}_1+\frac{1}{2}i\alpha\pi\rightarrow\dot{C}^+_{\rho}
\rightarrow \dot{u}_1+\frac{1}{2}i\alpha\pi\rightarrow+\infty+\frac{3}{2}i\alpha\pi\rightarrow+\infty
$$
with $\dot{C}^+_{\rho}=:\{u=\dot{u}_1+\rho e^{i\theta}:\ 0\le\theta<2\pi\}$ (see Fig. \ref{integral_contour_uniform}),
the exponential bounds of the $n$-th $(n\le-1)$ discrete Fourier transform
\eqref{eq:conclusionOfFouriersum_uniform}
 also holds.

Thus, the summation of all the discrete Fourier transforms $\mathfrak{F}\left(\frac{2n\pi}{\hbar}\right)$ yields the desired conclusion \eqref{eq:conclusionOfFouriersum_uniform}
\begin{align*}
\sum_{n\ne0}\mathfrak{F}[\overline{f}]\big{(}\frac{2 n\pi }{\hbar}\big{)}
=\mathcal{O}(1)\sum_{n\ne0}e^{-\frac{3|n|\alpha\pi^2}{\hbar}}
  +\mathcal{O}(1)x^{\alpha}\sum_{n\ne0}e^{-\frac{2|n|\pi^2\alpha}{\hbar}}
= \frac{\mathcal{O}(1)}{e^{\frac{2\alpha\pi^2}{\hbar}}-1}.
\end{align*}

It is clear that all the constants in $\mathcal{O}(1)$s in this proof are independent of $n$, $\hbar$, $x$, $\alpha$ and $\sigma$.
\end{proof}

According to \eqref{eq:estunif}, \eqref{eq:quadrature}, \eqref{trapzoid_S(x)} and \eqref{QuadratureErrorfor_w}, we have for the quadrature error $\overline{E}_{Q}(x)$ that
\begin{align}\label{quadratureOfbarf}
-\sum_{n\ne0}\mathfrak{F}[\overline{f}]\big{(}\frac{2 n\pi }{\hbar}\big{)}
=&\overline{E}_Q(x)
=\int_{-\infty}^{+\infty}\overline{f}(u,x)\mathrm{d}u-\hbar\sum_{j=-\infty}^{+\infty}\overline{f}(j\hbar,x)\\
=&\int_{0}^{\bar{N}_t\hbar}\overline{f}(j\hbar,x)\mathrm{d}u-\hbar\sum_{j=0}^{\bar{N}_t}\overline{f}(j\hbar,x)
+\overline{E}_T(x)-\overline{E}_{TD}(x)\notag\\
=&\overline{I}(x)-\overline{S}(x)+\overline{E}_T(x)-\overline{E}_{TD}(x),\notag
\end{align}
where $\overline{E}_{TD}$ satisfies by \eqref{eq:inequ} that
\begin{align}\label{DiscreteTrucatedError}
0\le\overline{E}_{TD}(x)=&\hbar\sum_{j=-\infty}^{-1}\overline{f}(j\hbar,x)
+\hbar\sum_{j=\bar{N}_t+1}^{+\infty}\overline{f}(\hbar,x)\notag\\
\le&\frac{\hbar\sin(\alpha\pi)}{\alpha\pi}\left(\sum_{j=-\infty}^{-1}e^{j\hbar-T}
+\sum_{j=\bar{N}_t+1}^{+\infty}e^{-\frac{j\hbar-T}{\kappa}}\right)\notag\\
\le&e^{-T}\int_{-\infty}^{0}e^t\mathrm{d}t
+\frac{\sin(\alpha\pi)}{\alpha\pi}\int_{(\kappa+1)T}^{+\infty}e^{-\frac{t-T}{\kappa}}\mathrm{d}t\notag\\
=&e^{-T}+\frac{\sin((1-\alpha)\pi)}{(1-\alpha)\pi}e^{-T}\le2e^{-T}.
\end{align}

Then from \eqref{quadratureOfbarf}, \eqref{DiscreteTrucatedError} and Theorem \ref{Quadratrue_rat_uniform} we have
by $\|\overline{E}_T\|_{\infty}\le 3e^{-T}$ that
\begin{align}\label{errquad_uniform}
\overline{I}(x)-\overline{S}(x)=\overline{I}(x)-\bar{r}_{\bar{N}_t}(x)
=\overline{E}_Q(x)-\overline{E}_T(x)+\overline{E}_{TD}(x)
=\frac{\mathcal{O}(1)}{e^{\frac{2\alpha\pi^2}{\hbar}}-1}+\mathcal{O}(e^{-T})
\end{align}
and the constants in the above $\mathcal{O}$ terms are independent of $T$, $n$, $\hbar$, $x$, $\alpha$ and $\sigma$.

Fig. \ref{quadratureerr_uni} illustrates the quadrature errors $\max_{x\in [0,1]}|\overline{S}(x)-\overline{I}(x)|$ of the rectangular rule  with various values of $\hbar$ and $\alpha$. From Fig. \ref{quadratureerr_uni}, we see that the quadrature errors with respect to $\hbar=\frac{\sqrt{2\alpha}\pi}{\sqrt{N_1}}$ are smaller than those to $\hbar=\frac{3\sqrt{\alpha}\pi}{\sqrt{N_1}}$ and $\hbar=\frac{\sqrt{\alpha}\pi}{\sqrt{N_1}}$ for fixed $\alpha$.

\subsection{Quadrature errors with  tapered exponentially clustered poles \eqref{eq:tapered2}}\label{sec32}
As  $f(u,x)$ is not included in $L^2$ and does not satisfies the conditions of Theorem \ref{StengerPossionFormula}, thus  Poisson's summation formula \eqref{Possionsummationformula} can not be applied directly to the integrand $f(u,x)$, which stems from the singularity of factor $\frac{1}{\sqrt{u}}$ at the original point. Thus, our strategy is to separate the pole and branch singularities by dividing the interval $[0,1]$ of $x$ into two subinterval, on which the quadrature errors of $\int_0^{+\infty}f(u,x)\mathrm{d}u$ are explored meticulously in Subsections \ref{subsec:3.1} and \ref{subsec:3.2}, respectively.

\subsubsection{A uniform bound of the quadrature error of $I(x)$ for $x$ near $0$}\label{subsec:3.1}
Following \cite[Lemma A.2]{Herremans2023}, similarly we get a uniform bound of $I(x)-S(x)$ for small $x$.

\begin{theorem}\label{la1}
Let $x^*=\frac{\frac{1}{\kappa}\sqrt{u^*}+1}{\sqrt{u^*}-1}
e^{\frac{1}{\alpha}(\sqrt{u^*}-T)}$ with
\begin{align}\label{eq:ustar0}
u^{*}=\frac{1+(1-2\alpha)\sqrt{4\alpha-4\alpha^2+1}}
{2(1-\alpha)^2}.
\end{align}
Then the quadrature error holds uniformly for $x\in [0,x^*]$ that
\begin{align}\label{eq:qerr}
0\le I(x)-S(x)\le I(x)\le  \left(\frac{\frac{1}{\kappa}\sqrt{u^*}+1}{\sqrt{u^*}-1}\right)^{\alpha}e^{\sqrt{u^*}-T}.
\end{align}
Additionally, $u^{*}$
monotonically increases for $\alpha\in(0,1)$ satisfying $u^{*}\in(1,4)$ and  $\left(\frac{\frac{1}{\kappa}\sqrt{u^*}+1}{\sqrt{u^*}-1}\right)^{\alpha}e^{\sqrt{u^*}}$ is uniformly bounded for $\alpha\in (0,1)$.
\end{theorem}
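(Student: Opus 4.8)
The plan is to reduce the whole statement to one monotonicity property of the integrand $f(\cdot,x)$ from \eqref{eq:fun} and then read off every inequality in \eqref{eq:qerr}.

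First I would dispatch the two easy inequalities. Since $f(u,x)\ge 0$ (immediate from \eqref{eq:inequ}), the sum $S(x)=h\sum_{j=1}^{N_t}f(jh,x)$ is nonnegative, so $I(x)-S(x)\le I(x)$. For the rightmost bound, the substitution $t=\sqrt u-T$ (under which $\frac{1}{2\sqrt u}\,\mathrm{d}u=\mathrm{d}t$) turns $I(x)$ in \eqref{eq:quadrature} into $\frac{\sin(\alpha\pi)}{\alpha\pi}\int_{-T}^{\sqrt{N_th}-T}\frac{xe^{t}}{e^{t/\alpha}+x}\,\mathrm{d}t$, which is $\le \frac{\sin(\alpha\pi)}{\alpha\pi}\int_{\mathbb{R}}\frac{xe^{t}}{e^{t/\alpha}+x}\,\mathrm{d}t=x^{\alpha}$ by \eqref{eq:int}; monotonicity of $t\mapsto t^{\alpha}$ then gives $I(x)\le (x^{*})^{\alpha}$ for $x\in[0,x^{*}]$, and $(x^{*})^{\alpha}$ equals $\big(\frac{\frac1\kappa\sqrt{u^{*}}+1}{\sqrt{u^{*}}-1}\big)^{\alpha}e^{\sqrt{u^{*}}-T}$ by the definition of $x^{*}$.

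The crux is the leftmost inequality $S(x)\le I(x)$, which I would deduce from the claim that for every $x\in(0,x^{*}]$ the map $u\mapsto f(u,x)$ is non-increasing on $(0,+\infty)$: granting it, $hf(jh,x)\le\int_{(j-1)h}^{jh}f(u,x)\,\mathrm{d}u$ for each $j$ (note $f$ is integrable at $0$, where it blows up only like $u^{-1/2}$), and summing over $1\le j\le N_t$ yields $S(x)\le\int_0^{N_th}f(u,x)\,\mathrm{d}u=I(x)$. To prove the monotonicity claim I would set $v=\sqrt u$ and use
\[
\partial_v\log f(v^2,x)=1-\frac1v-\frac{\frac1\alpha\,e^{(v-T)/\alpha}}{e^{(v-T)/\alpha}+x}.
\]
For $0<v\le 1$ this is automatically $\le 0$; for $v>1$ it is $\le 0$ if and only if $x\le\frac{v/\kappa+1}{v-1}\,e^{(v-T)/\alpha}$ (using $\frac1\alpha-1=\frac1\kappa$). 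Hence $f(\cdot,x)$ is non-increasing on $(0,\infty)$ exactly when $x\le\min_{v>1}\frac{v/\kappa+1}{v-1}e^{(v-T)/\alpha}$. The logarithmic derivative of this last function vanishes precisely when $\frac{1}{v+\kappa}-\frac{1}{v-1}+\frac1\alpha=0$, which (after clearing denominators and using $\alpha(1+\kappa)=\kappa$) becomes $v^2+(\kappa-1)v-2\kappa=0$; its unique positive root $v^{*}$ obeys $v^{*2}=u^{*}$ with $u^{*}$ as in \eqref{eq:ustar0}, and since the function tends to $+\infty$ as $v\to1^{+}$ and as $v\to+\infty$, this root is the minimizer. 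Therefore the threshold equals $\frac{v^{*}/\kappa+1}{v^{*}-1}e^{(v^{*}-T)/\alpha}=x^{*}$, which is the asserted description of $x^{*}$; the case $x=0$ is trivial since then $I(0)=S(0)=0$.

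It remains to record the properties of $u^{*}$, which are pure calculus. Writing $v^{*}=\frac12\big(1-\kappa+\sqrt{\kappa^2+6\kappa+1}\big)$ with $\kappa=\frac{\alpha}{1-\alpha}$ increasing in $\alpha$, one has $\frac{\mathrm{d}v^{*}}{\mathrm{d}\kappa}>0$ (equivalently $(\kappa+3)^2>\kappa^2+6\kappa+1$), so $u^{*}=v^{*2}$ increases with $\alpha$; and since $g(v):=v^2+(\kappa-1)v-2\kappa$ satisfies $g(1)=-\kappa<0$ and $g(2)=2>0$, the positive root $v^{*}$ lies in $(1,2)$, giving $u^{*}\in(1,4)$. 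Finally, in $\big(\frac{\frac1\kappa\sqrt{u^{*}}+1}{\sqrt{u^{*}}-1}\big)^{\alpha}e^{\sqrt{u^{*}}}$ the factor $e^{\sqrt{u^{*}}}=e^{v^{*}}<e^{2}$, while $\big(\frac{v^{*}+\kappa}{\kappa(v^{*}-1)}\big)^{\alpha}=\exp\!\big(\alpha\log\frac{v^{*}+\kappa}{\kappa(v^{*}-1)}\big)$ and the exponent is continuous on $(0,1)$ with limit $0$ both as $\alpha\to1^{-}$ (the base tends to $1$) and as $\alpha\to0^{+}$ (there $v^{*}-1\sim\kappa\sim\alpha$, so the base grows like $\alpha^{-2}$ and $\alpha\log\alpha^{-2}\to0$), hence bounded. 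The one real obstacle in the whole argument is conceptual rather than computational: recognizing that $x^{*}$ in the statement is nothing but the exact threshold at which $f(\cdot,x)$ loses monotonicity in $u$; after that, the bound $I(x)-S(x)\ge0$ is the standard right-endpoint Riemann-sum comparison for a monotone function, and the only genuine computation is checking that the positive root of $v^2+(\kappa-1)v-2\kappa=0$, squared, reproduces the closed form \eqref{eq:ustar0}.
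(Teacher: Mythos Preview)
Your proposal is correct and follows essentially the same approach as the paper: both arguments identify $x^*$ as the exact threshold below which $u\mapsto f(u,x)$ is monotone decreasing on $(0,\infty)$, obtain that threshold by minimizing $g(v)=\frac{v/\kappa+1}{v-1}e^{(v-T)/\alpha}$ over $v>1$ via the same quadratic $v^2+(\kappa-1)v-2\kappa=0$ (the paper writes the constant term as $\alpha\kappa+\alpha+\kappa$, which equals $2\kappa$), and then use the right-endpoint Riemann-sum comparison to get $S(x)\le I(x)\le x^\alpha\le (x^*)^\alpha$. Your use of the substitution $v=\sqrt{u}$ and logarithmic derivatives throughout, together with the identity $(v^*+\kappa)(v^*-1)=\kappa$ for the boundedness claim, is a slightly cleaner packaging of exactly the computations the paper carries out.
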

\begin{proof}
It is obvious from \eqref{eq:int} and \eqref{eq:quadrature} that $0<I(x)<x^{\alpha}$ due to that the integrand $f(u,x)$ is positive for all $[0,+\infty)\times(0,1]$. By a relatively tedious but straightforward calculation, we show that $f(u,x)$ is monotonically decreasing for any fixed $x$ between $0$ and $x^*=\frac{\frac{1}{\kappa}\sqrt{u^*}+1}{\sqrt{u^*}-1}e^{\frac{1}{\alpha}(\sqrt{u^*}-T)}$, as a function of $u\in[0,+\infty)$. Its proof is sketched as follows.

For fixed $x\in[0,1]$, $f(u,x)$ is decreasing with respect to $u\in[0,+\infty)$ if and only if the  partial derivative
$$
\partial_uf(u,x)=\frac{f(u,x)}{2\sqrt{u}}
\left[\frac{x-\frac{1}{\kappa}e^{\frac{1}{\alpha}(\sqrt{u}-T)}}
{x+e^{\frac{1}{\alpha}(\sqrt{u}-T)}}-\frac{1}{\sqrt{u}}\right]\le 0
$$
which is equivalent to
$$
x(\sqrt{u}-1)\le \bigg(\frac{\sqrt{u}}{\kappa}+1\bigg)e^{\frac{1}{\alpha}(\sqrt{u}-T)}.
$$

For $0\le u\le 1$, the above inequality is satisfied obviously.
While for  $u>1$, $f(u,x)$ is decreasing with respect to $u$ if and only if
\begin{align}\label{eq:gu}
x\le \frac{\frac{1}{\kappa}\sqrt{u}+1}{\sqrt{u}-1}
e^{\frac{1}{\alpha}(\sqrt{u}-T)}=:g(u),\, u>1.
\end{align}
It is easy to check that $u^*$ in \eqref{eq:ustar0} is the minimum point of $g(u)$ by solving the equation
$$
g'(u)=\frac{e^{\frac{1}{\alpha}(\sqrt{u}-T)}}{2\sqrt{u}(\sqrt{u}-1)}
\bigg{[}\frac{1}{\alpha}\big{(}\frac{\sqrt{u}}{\kappa}+1\big{)}
-\frac{\frac{1}{\kappa}+1}{\sqrt{u}-1}\bigg{]}=0,
$$
that is,
\begin{align}\label{eq:guroot}
u+(\kappa-1)\sqrt{u}-(\alpha\kappa+\alpha+\kappa)=0.
\end{align}
By solving \eqref{eq:guroot}, readily we have
$$
u^*=\alpha+\alpha\kappa+\frac{\kappa^2+1}{2}+
\frac{1-\kappa}{2}\sqrt{(\kappa+1)(4\alpha+\kappa+1)},
$$
which is equivalent to \eqref{eq:ustar0} with substituting $\kappa$ by $\frac{\alpha}{1-\alpha}$.
Hence, for all fixed $x\in[0, g(u^*)]$ it follows that $f(u,x)$ monotonically decreases with respect to $u\in(1,+\infty)$.
The continuity of $f$ implies that $f(u,x)$ is monotonically decreasing against $u>0$.

Since $S(x)$ is a lower Riemann sum of $f(u,x)$ on $u\in[0,+\infty)$, which implies $0<S(x)\le I(x)\le x^\alpha\le(x^*)^\alpha$ for $x\in [0,x^*]$. Then it derives \eqref{eq:qerr}.

In particular, for $\alpha=\frac{1}{2}$, $u^*=2$ and $x^*=\frac{\sqrt{2}+1}{\sqrt{2}-1}e^{2\sqrt{2}-2T}$ (cf. \cite{Herremans2023}).
Additionally, from \eqref{eq:ustar0}, it is easy to verify that $u^{*}$
 is monotonically increasing in $\alpha \in (0,1)$ and
\begin{align*}
\lim_{\alpha\rightarrow0^+}u^{*}=1,\quad\quad \lim_{\alpha\rightarrow1^-}u^{*}=4.
\end{align*}

Furthermore, by using  $\sqrt{u^*}-1=\frac{\alpha\kappa+\alpha}{\kappa+\sqrt{u^*}}=\frac{\alpha}{\alpha+(1-\alpha)\sqrt{u^*}}$ derived from \eqref{eq:guroot}, $\left(\frac{\frac{1}{\kappa}\sqrt{u^*}+1}{\sqrt{u^*}-1}\right)^{\alpha}e^{\sqrt{u^*}}
=e^{\alpha\log\left(\frac{(1-\alpha)\sqrt{u^*}+\alpha}{\alpha(\sqrt{u^*}-1)}\right)}e^{\sqrt{u^*}}
=e^{2\alpha\log\left(\frac{(1-\alpha)\sqrt{u^*}+\alpha}{\alpha}\right)}e^{\sqrt{u^*}}$ is uniformly bounded for $\alpha\in (0,1)$.
\end{proof}

Fig. \ref{pointwise_errors_of_abs_x} illustrates the minimum point of $g(u)$ and the monotonicity of $f(u,x)$ for $x\in [0,x^*]$.

\begin{figure}[htbp]
\centerline{\includegraphics[width=14cm]{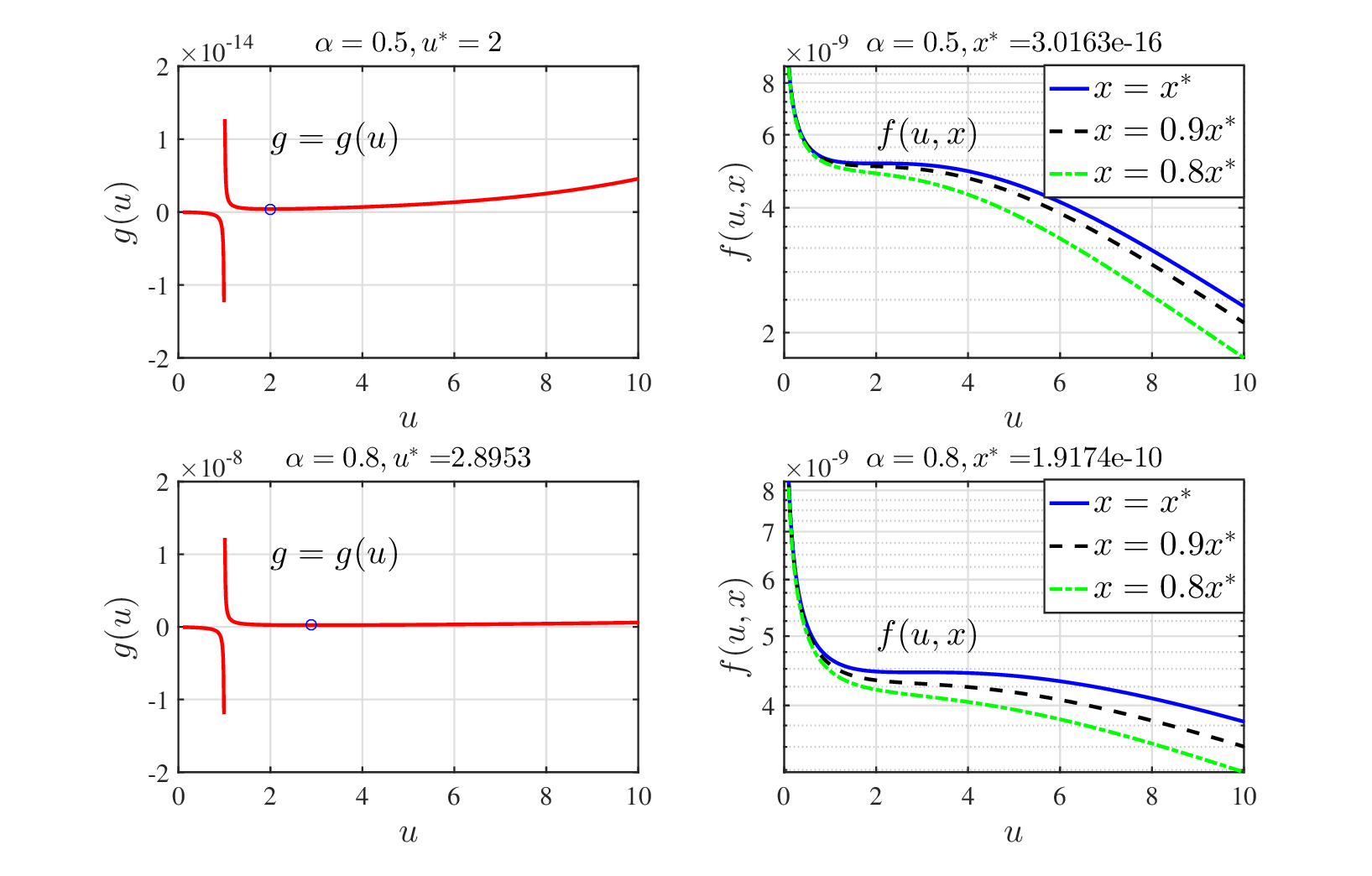}}
\caption{The function $g(u)$ in \eqref{eq:gu} is negative in $[0,1)$ and positive in $(1,\infty)$, wherein it takes a minimum value at $u^*$ (circled). Correspondingly, $f(u,x)$ monotonically decreases in
$[0,+\infty)$ for all fixed $x\in[0,x^*]$, where it takes $x=x^*,0.9x^*$ and $0.8x^*$ for illustration.}
\label{pointwise_errors_of_abs_x}
\end{figure}

\begin{remark}
Theorem \ref{la1} together with the truncated error $E_T(x)$ \eqref{eq:ECrat} and approximation error
$E_{PA}(x)$ of $P_{N_2}$ \eqref{polynomial app} leads to that for $T=\sigma\alpha\sqrt{N_1}>0$
\begin{align}\label{eq:xstar}
\left\{
\begin{array}{ll}
\|x^\alpha-r_{N}(x)\|_{\infty}=\mathcal{O}(e^{-T}),&\mbox{for all $x\in[0,1]$ if $x^*\ge 1$},\\
\|x^\alpha-r_{N}(x)\|_{\infty}=\mathcal{O}(e^{-T}),&\mbox{for all $x\in[0,x^*]$ if  $x^*< 1$.  }
\end{array}\right.
\end{align}
and the constants in the above $\mathcal{O}$ terms \eqref {eq:xstar} are independent of $T$, $x^*$, $h$, $\sigma$ and $\alpha$.
In the following, we focus on $x^*\in (0,1)$.
\end{remark}

\subsubsection{A uniform bound of the quadrature error of $I(x)$ for $x\in [x^*,1]$}
\label{subsec:3.2}
The main thought of the present part is similar to Subsection \ref{subsec:3.01} in some aspects, however, it may be much more complicated as a result of the branch singularity of $f(u,x)$. To take the advantage of the Fourier transform of $\hat f(u,x)$ and the Poisson summation \eqref{QuadratureErrorfor_w}, we will confirm \eqref{eq:errs} for $x\in [x^*,1]$.

From $1<u^*<4$ depending only on $\alpha$,  we see that $\frac{\frac{1}{\kappa}\sqrt{u^*}+1}{\sqrt{u^*}-1}>1$ due to that for $\alpha\le \frac{1}{2}$,  $\frac{1}{\kappa}=\frac{1-\alpha}{\alpha}\ge 1$, while for $\alpha>\frac{1}{2}$, $\left(1-\frac{1}{\kappa}\right)\sqrt{u^*}< 2\left(1-\frac{1}{\kappa}\right)=\frac{2(2\alpha-1)}{\alpha}<2$. Then
we may rewrite $\frac{\frac{1}{\kappa}\sqrt{u^*}+1}{\sqrt{u^*}-1}e^{\frac{1}{\alpha}(\sqrt{u^*}-T)}=e^{\frac{1}{\alpha}(\gamma-T)}$ with $\gamma>1$ uniformly bounded by Theorem \ref{la1} and independent of $T$.

In the rest of this paper, for the root-valued function $\sqrt{u}$ we consider mainly its principal branch holomorphic in the slit plane $\mathbb{C}\setminus(-\infty,0]$, and $f(u,x)$ also refers to its principal branch.
Similarly to \cite[Lemma A.3]{Herremans2023}, for any fixed $x\in [x^*,1]$ and as a function of $u$, $f(u,x)$ has the simple poles
\begin{equation*}
u_k(x)=(T+\alpha\log{x})^2-\alpha^2\pi^2(2k-1)^2
+i\big{[}2\alpha\pi(2k-1)(T+\alpha\log{x})\big{]},
\end{equation*}
where $k=0,\pm1,\ldots$, among which the closest to the real axis are $u_0(x)$ and $u_1(x)$ (see Fig. \ref{decay_behavior_of_fux}).
We denote them by $u_0$ and $u_1$ for brevity, which take the same real part
$$v_0=(T+\alpha\log{x})^2-\alpha^2\pi^2,$$
and
whose imaginary parts are
$$\mp a=\mp2\alpha\pi(T+\alpha\log{x}),$$
respectively. Then $f(u,x)$ is an analytic function in the strip domain
$$\big{\{}u\in\mathbb{C}:|\Im(u)|\le a,\ \Re(u)> 0\big{\}}$$
except for two simple poles $u_0$ and $u_1$ on the boundary,
where $\Re(u)$  denotes the real  part of $u$.

\begin{figure}[htbp]
\centerline{\includegraphics[height=3.6cm,width=13cm]{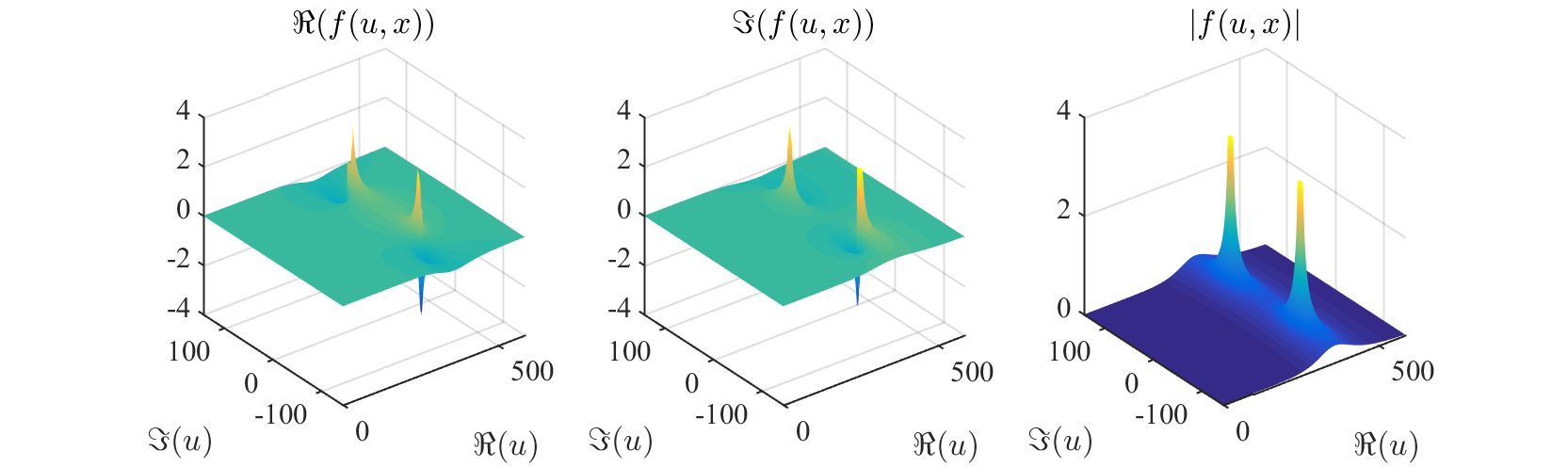}}
\caption{The decay behaviors of real, imaginary parts and the modulus of $f(u,x)$ \eqref{eq:fun} on the strip domain $\{u\in \mathbb{C}:|\Im(u)|\le 2a,\ \Re(u)>0\}$, where $\alpha=0.6$, $x=0.5$ and $T=20\sqrt{\alpha}(1-\alpha)\pi$. The singularities in the figures correspond to $u_0$ and $u_1$, the poles nearest to the real line of $f(u,x)$.}
\label{decay_behavior_of_fux}
\end{figure}

Additionally, we observe that $u_{-1}(x)$ and $u_2(x)$ locate on the boundary of the strip domain
$$\{u\in\mathbb{C}: |\Im(u)|<3a\},$$
and the remaining poles $u_k(x),k=-2,\pm3, \ldots$ of $f(u,x)$ locate outside.
Fig. \ref{decay_behavior_of_fux} shows the two poles nearest to the real axis of $f(u,x)$ in \eqref{eq:fun}, where we also observe the decay behaviors of real, imaginary parts and the modulus of $f(u,x)$ on the strip domain $\{u\in \mathbb{C}:|\Im(u)|\le 2a,\ \Re(u)>0\}$.

Set $u=v+iw=re^{i\theta}$, $r=\sqrt{v^2+w^2}$. It is easy to verify by $\cos\theta=\frac{v}{r}$  and the half angle formula that
\begin{align}\label{eq:sqrtr}
\Re(\sqrt{u})=\sqrt{\frac{\sqrt{v^2+w^2}+v}{2}},\ \Im(\sqrt{u})=\pm\sqrt{\frac{\sqrt{v^2+w^2}-v}{2}}=\pm \sqrt{\frac{w^2}{2(\sqrt{v^2+w^2}+v)}}.
\end{align}
From the definition of $u_0$, it is easy to check that
\begin{align}\label{eq:sqrtru0}
\sqrt{u_0}=T+\alpha\log x \pm i\alpha\pi.
\end{align}
In this paper, we confine to $\sqrt{u_0}=T+\alpha\log x - i\alpha\pi$.

Just as mentioned above, the existing conclusion relevant to Paley-Wiener Theorems \cite{PaleyWiener1934,Trefethen2014SIREV} cannot be directly applied to $f(u,x)$ since the distance $|a|=2\alpha\pi|T+\alpha\log x|$ of the closest poles $u_0$ and $u_1$ of $f(u,x)$ for $\Re(u)>0$ to the real axis varies in $[0,+\infty)$ for $x\in (0,1]$ while in
$(2\alpha\pi\gamma, 2\alpha\pi T]$ for $x\in [x^*,1]$. Moreover, the step size $h=\sigma^2\alpha^2$ is fixed. Thus to get exponential convergence rate on $T$ uniformly  for all $x\in [x^*,1]$, a large part of the follow-up effort
will be devoted to the establishment of Theorem \ref{eq:thm}  analogous to the Paley-Wiener result \cite{Trefethen2014SIREV} by $\hat{f}$ instead of $f$ and three additional useful lemmas in
Appendix \ref{AppendixA}, whose proofs involve meticulous analysis and exquisite techniques.

\bigskip

{\bf Case I}: By $\Upsilon$ we denote the set of all those $x\in [x^*,1]$ satisfying $v_0:=\Re(u_0)>M_0h$ with the positive integer $M_0$ defined in \eqref{DefinitionOfM0}, which implies that
\begin{equation}\label{eq:real}
v_0>\max\left\{2h+18\alpha^2\pi^2,\ 4\big(\sqrt{h}+\sqrt{\alpha\pi}\big)^4\right\}.
\end{equation}
In particular, from the definition of $a=2\alpha\pi(T+\alpha\log{x})$, we see that $a\ge 2\alpha\pi\gamma>0$ for $x\in[x^*,1]$.

\begin{theorem}\label{eq:thm}
Let $\hat f(u,x)$ be defined in \eqref{eq:extension_of_f} with $x\in\Upsilon$.
Then the sum of discrete Fourier transform decays at an exponential rate
\begin{align}\label{eq:conclusionOfFouriersum}
\sum_{n\ne0}\mathfrak{F}[\hat f]\big{(}\frac{2 n\pi }{h}\big{)}= e^{h}\mathcal{O}(e^{-T})+\mathcal{O}\left(\frac{x^\alpha}{e^{\frac{2\pi a}{h}}-1}\right),\quad a=2\pi\alpha(T+\alpha\log{x}),
\end{align}
and the constants in the $\mathcal{O}$ terms \eqref{eq:conclusionOfFouriersum} are independent of $T$, $n$, $h$, $x$, $\alpha$ and $\sigma$.
\end{theorem}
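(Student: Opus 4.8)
The goal is to show that $\sum_{n\ne0}\mathfrak{F}[\hat f]\bigl(\tfrac{2n\pi}{h}\bigr)$ decays at the claimed rate for $x\in\Upsilon$. Following the template already used successfully in the proof of Theorem~\ref{Quadratrue_rat_uniform}, the plan is to contour-shift each discrete Fourier coefficient $\mathfrak{F}[\hat f]\bigl(\tfrac{2n\pi}{h}\bigr)=\int_{-\infty}^{+\infty}\hat f(u,x)e^{-2n\pi i u/h}\,\mathrm{d}u$ into the complex plane. For $n\ge 1$ I would push the contour down by $3a$ (into $\{\Im u=-3a\}$, since the only poles of $f$ with $|\Im u|\le 3a$ and $\Re u>0$ are $u_0$ on $\{\Im u=-a\}$ and $u_{-1},u_2$ on the boundary $\{|\Im u|=3a\}$), picking up a residue-type contribution from the simple pole $u_0$ via a small circle $C_\rho$ around it, exactly as with $\dot C^-_\rho$ before. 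Symmetrically, for $n\le -1$ shift the contour up by $3a$ and encircle $u_1$. The $n$-th coefficient then splits as a horizontal-line integral at height $\mp 3a$, bounded by $e^{-6\pi |n| a/h}$ times an $L^1$-norm of $\hat f$ along that line, plus the small-circle integral around $u_0$ (resp.\ $u_1$), bounded by $e^{-2\pi|n|(a-\rho)/h}$ times a bounded factor; summing a geometric series in $|n|$ then yields the $\mathcal{O}\!\bigl(x^\alpha/(e^{2\pi a/h}-1)\bigr)$ term, and the horizontal-line contributions sum to something of strictly smaller order. The extra $e^{h}\mathcal{O}(e^{-T})$ term should come from the region $|\Re u|\le h$ where $\hat f$ has been replaced by its constant-in-$\Re u$ continuation $f(h+i\Im u,x)$, plus the contribution near $\Re u=0$: the branch-cut slit along $(-\infty,0]$ forces this modification and produces a small error controlled by $e^{\Re\sqrt{u}}\lesssim e^{\sqrt{h}}\cdot(\text{small})$, with the $e^{-T}$ coming from the $e^{-T}$ prefactor in $f$.

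\textbf{Key steps in order.} First, verify that $\hat f(u,x)\in L^2(\mathbb{R})\cap C(\mathbb{R})$ and satisfies the hypotheses of Theorem~\ref{StengerPossionFormula}, using the extension \eqref{eq:extension_of_f} to kill the $1/\sqrt{u}$ singularity at the origin and the bounds \eqref{eq:inequ}; this legitimizes \eqref{QuadratureErrorfor_w}. Second, establish that $\hat f$ is holomorphic in the truncated strip $\{|\Im u|<3a,\ \Re u>h\}$ except for the single interior pole $u_0$ (for the lower half) via the pole formula $u_k(x)=(T+\alpha\log x)^2-\alpha^2\pi^2(2k-1)^2+i\bigl[2\alpha\pi(2k-1)(T+\alpha\log x)\bigr]$, and—crucially—that the condition $x\in\Upsilon$, i.e.\ $v_0>M_0 h$, guarantees via \eqref{eq:real} that the shifted horizontal contour at height $-3a$ stays in the region $\Re u>h$ where $\hat f$ coincides with $f$; here the lemmas in Appendix~\ref{AppendixA} (controlling $\Re\sqrt u$, $\Im\sqrt u$ from \eqref{eq:sqrtr} and the location of $v_0$) will be invoked. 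Third, show the connecting vertical segments at $\Re u=\pm A$ vanish as $A\to\infty$, using the exponential decay estimates analogous to those displayed for $\overline f$. Fourth, bound the horizontal-line integral: parametrize $u=t-3ai$, use $\Re\sqrt{t-3ai}$ to show $f(\cdot-3ai,x)$ is absolutely integrable with a bound uniform in $x\in\Upsilon$ (this is where one needs $a$ bounded below by $2\alpha\pi\gamma$ and the decay of $f$ as $\Re u\to\pm\infty$), multiplied by the factor $e^{-6\pi n a/h}$. Fifth, bound the small-circle integral around $u_0$: write $f(u,x)=\frac{\sin(\alpha\pi)}{\alpha\pi}\cdot\frac{1}{2\sqrt u}\cdot\frac{x e^{\sqrt u-T}}{e^{(\sqrt u-T)/\alpha}+x}$, factor out the simple zero of the denominator at $u_0$ using $\sqrt{u_0}=T+\alpha\log x-i\alpha\pi$, and estimate $\bigl|\int_{C_\rho}\bigr|\le \mathcal{O}(1)\,x^\alpha\,e^{-2\pi n(a-\rho\sin\theta)/h}$, then let $\rho\to0$. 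Sixth, sum over $n\ne0$, collect the geometric series, and absorb the $\Re u\in[-h,h]$ correction into the $e^h\mathcal{O}(e^{-T})$ term; finally check all $\mathcal{O}$-constants are independent of $T,n,h,x,\alpha,\sigma$.

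\textbf{Main obstacle.} The delicate point—and the reason the Appendix lemmas are needed—is making the horizontal-line bound at height $\mp 3a$ \emph{uniform in $x\in\Upsilon$} despite two competing effects: the branch point of $\sqrt u$ at the origin means the shifted contour must be kept in $\{\Re u>h\}$, which is exactly what $v_0>M_0 h$ with $M_0$ as in \eqref{DefinitionOfM0} is engineered to guarantee (one must check $\Re u$ stays $>h$ all along the line $\Im u=-3a$ once it re-enters near $u_0$, and that the broken-line detour $-\infty\to-\infty-3ai\to u_0-\tfrac32 ai\to C^-_\rho\to\cdots$ never crosses the slit or a spurious pole); and the factor $e^{\Re\sqrt u}$ in $f$ grows, so one needs the competing decay of $e^{-(\Re\sqrt u)/\alpha}$ from the denominator together with \eqref{eq:sqrtr} to show the integrand is genuinely in $L^1$ with a bound not blowing up as $x\to x^*$ (where $a\downarrow 2\alpha\pi\gamma$) or as $\alpha\to0^+,1^-$. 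Handling the interaction of the branch cut with the contour shift—especially verifying that on the detour near $u_0$ the path stays in the principal-branch domain $\mathbb C\setminus(-\infty,0]$ and that $\hat f=f$ there—is the technically heaviest part, and is precisely what the three lemmas in Appendix~\ref{AppendixA} are designed to supply.
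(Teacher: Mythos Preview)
Your high-level strategy---contour shift, residue at $u_0$, geometric summation over $n$---matches the paper's, but there is a genuine gap in the contour argument. The extension $\hat f$ in \eqref{eq:extension_of_f} is \emph{not holomorphic} in any horizontal strip: on the middle band $|\Re u|\le h$ it is defined as $f(h+i\Im u,x)$, which depends only on $\Im u$ and hence fails the Cauchy--Riemann equations there. Therefore you cannot push the whole real-line contour down to $\{\Im u=-3a\}$; your proposed broken line ``$-\infty\to-\infty-3ai\to u_0-\tfrac{3}{2}ai\to C_\rho^-\to\cdots$'' is not justified by Cauchy's theorem (and would also cross the branch cut of $\sqrt{u}$). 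The paper instead exploits the even symmetry of $\hat f$ on the real line to write
\[
h\,c_n=\int_h^{+\infty}\! f(u,x)e^{i\frac{2n\pi}{h}u}\,\mathrm{d}u+\int_h^{+\infty}\! f(u,x)e^{-i\frac{2n\pi}{h}u}\,\mathrm{d}u+\frac{2f(h,x)}{h}\int_0^h\!\cos\Bigl(\tfrac{2n\pi u}{h}\Bigr)\,\mathrm{d}u,
\]
where the last term is \emph{exactly zero}. Each half-line integral is then shifted along a contour $\Gamma^\pm_{\rho,h}$ that starts at the finite point $u=h$, goes vertically to $h\mp 2ia$, then horizontally to $+\infty\mp 2ia$ with a keyhole around $u_0$ (resp.\ $u_1$); see Fig.~\ref{integral_contour}. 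These contours never leave $\{\Re u\ge h\}$, where $f$ itself is holomorphic modulo its poles.

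This also means your attribution of the $e^h\mathcal{O}(e^{-T})$ term is incorrect: it does not come from the patch $|\Re u|\le h$ (which contributes zero), but from the two \emph{vertical} segments $h\to h\mp 2ia$. Bounding $\int_0^{2a}[f(h+it,x)-f(h-it,x)]e^{-2n\pi t/h}\,\mathrm{d}t$ uniformly in $x\in\Upsilon$ is precisely the content of Lemma~\ref{la3}; Lemma~\ref{lemma_inte_circ} handles the small circle $C_\rho^\pm$, and Lemma~\ref{infty} the horizontal line at $\Im u=\mp 2a$ (not $\mp 3a$). Finally, the role of the hypothesis $v_0>M_0h$ is not to keep the horizontal contour in $\{\Re u>h\}$---that is automatic by construction---but to guarantee the uniform estimates in all three lemmas: it keeps the pole $u_0$ well separated both from the branch point at $0$ and from the vertical segment at $\Re u=h$, so that the bounds in \eqref{eq:real} are available.
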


\begin{proof}
We establish the conclusion by leveraging several lemmas that are sketched in Appendix \ref{AppendixA}
to avoid unnecessary repetition.

From the definition of \eqref{eq:extension_of_f}, $\hat f(u,x)$ is continuous and piecewise smooth for $u\in (-\infty,+\infty)$ with arbitrarily fixed $x\in (0,1]$, and it is easy to verify that
$\hat f\in L^2(\mathbb{R})$.

Moreover, applying $f(h,x)=\frac{\sin(\alpha\pi)}{\alpha\pi}
\frac{1}{2\sqrt{h}}\frac{xe^{\sqrt{h}-T}}
{e^{\frac{1}{\alpha}(\sqrt{h}-T)}+x}\le \frac{\sin(\alpha\pi)}{\alpha\pi}
\frac{e^{\sqrt{h}(1-\sqrt{N_1})}}{2\sqrt{h}}$, it follows
$$\int_{-\infty}^{+\infty}|\hat{f}(u,x)|\mathrm{d}u=2\int_0^hf(h,x)\mathrm{d}u+2\int_h^{+\infty}|f(u,x)|\mathrm{d}u <M_1$$
also holds for a certain constant $M_1=4+\sqrt{h}e^{\sqrt{h}(1-\sqrt{N_1})}\le4+e^{-1}$ uniformly for all $x\in(0,1]$ as $N_1>1$. Then $\hat{f}(u,x)$ satisfies $\int_{-\infty}^{+\infty}|\hat{f}(u,x)|\mathrm{d}u<+\infty$, and thus the Fourier transform $\mathfrak{F}[\hat{f}(u,x)]=\int_{-\infty}^{+\infty}\hat{f}(u,x)e^{-i\xi u}\mathrm{d}u$
exists and continuous for all real $\xi$ (see \cite[(10.6-12)-(10.6-13)]{Henrici}).

Define an $h$-periodic function
\begin{equation*}
F(\upsilon,x)=\sum_{k=-\infty}^{\infty}\hat{f}(kh+\upsilon,x),\ \upsilon\in[0,h],
\end{equation*}
whose uniform convergence can be checked readily by \eqref{eq:inequ}.
Then following \cite[p. 270]{Henrici}, the $n$th $(n\ge1)$ Fourier coefficient
of $F(\upsilon,x)$ satisfies that
\allowdisplaybreaks[4]
\begin{align}
c_n=&\frac{1}{h}\mathfrak{F}[\hat f]\big{(}\frac{2n\pi}{h}\big{)}
=\frac{1}{h}\int_0^hF(\upsilon,x)e^{-i\frac{2n\pi}{h}\upsilon}\mathrm{d}\upsilon\notag\\
=&\frac{1}{h}\sum_{k=-\infty}^{\infty}
\int_{kh}^{(k+1)h}\hat f(u,x)e^{-i\frac{2n\pi}{h}u}\mathrm{d}u\label{eq:fourier_c}\\
=&\frac{1}{h}
\int_{h}^{+\infty} f(u,x)e^{i\frac{2n\pi}{h}u}\mathrm{d}u
+\frac{1}{h}
\int_{h}^{+\infty} f(u,x)e^{-i\frac{2n\pi}{h}u}\mathrm{d}u\notag\\
&+\frac{2}{h}\int_{0}^{h}f(h,x)\cos{\big{(}\frac{2n\pi}{h}u\big{)}}\mathrm{d}u\notag\\
=&\frac{1}{h}
\int_{\Gamma^{+}_{\rho,h}}f(u,x)e^{i\frac{2n\pi}{h}u}\mathrm{d}u
+\frac{1}{h}
\int_{\Gamma^{-}_{\rho,h}} f(u,x)e^{-i\frac{2n\pi}{h}u}\mathrm{d}u\label{eq:fourier_positive11}\\
=&\frac{i}{h}\int_{0}^{2a}\big{[}f(h+it,x)-f(h-it,x)\big{]}
e^{-\frac{2n\pi}{h}t}\mathrm{d}t\label{eq:fourier_positive11111}\\
&+\frac{1}{h}\left\{
\int_{h-2ia}^{u_0-ia} +\int_{C^{-}_{\rho}}+\int_{u_0-ia}^{+\infty-2ia}\right\}f(u,x)e^{-i\frac{2n\pi}{h}u}\mathrm{d}u\notag\\
&+\frac{1}{h}\left\{
\int_{h+2ia}^{u_1+ia} +\int_{C^{+}_{\rho}}+\int_{u_1+ia}^{+\infty+2ia}\right\}f(u,x)e^{i\frac{2n\pi}{h}u}\mathrm{d}u,\notag\\
=&\frac{i}{h}\int_{0}^{2a}\big{[}f(h+it,x)-f(h-it,x)\big{]}
e^{-\frac{2n\pi}{h}t}\mathrm{d}t\label{eq:fourier_positive}\\
&+\frac{1}{h}\left\{
\int_{h-2ia}^{+\infty-2ia} +\int_{C^{-}_{\rho}}\right\}f(u,x)e^{-i\frac{2n\pi}{h}u}\mathrm{d}u\notag\\
&+\frac{1}{h}\left\{
\int_{h+2ia}^{+\infty+2ia} +\int_{C^{+}_{\rho}}\right\}f(u,x)e^{i\frac{2n\pi}{h}u}\mathrm{d}u,\notag
\end{align}
where we used $\frac{2}{h}\int_{0}^{h}f(h,x)\cos{\big{(}\frac{2n\pi}{h}u\big{)}}\mathrm{d}u
=\frac{2f(h,x)}{h}\int_{0}^{h}\cos{\big{(}\frac{2n\pi}{h}u\big{)}}\mathrm{d}u=0$,
and
$C^{\pm}_{\rho}=\{z=u_0+\rho e^{i\theta},\theta:0\rightarrow\pm2\pi\}$ with
$0<\rho=\frac{1}{2}\min\{\alpha^2\pi^2,2\pi\alpha\gamma,\frac{h}{N_0}\}$ for some fixed sufficiently large  $N_0$ independent of $x$ and $T$. We also used in \eqref{eq:fourier_positive11} the Cauchy's integral theorem on the analytic function $f(u,x)e^{\pm i\frac{2n\pi}{h}u}$, and the integrals on $[h,+\infty)$ are converted to those on the paths (see Fig. \ref{integral_contour}):
$$\Gamma^{-}_{\rho,h}:\ h\rightarrow(h-2ia)\rightarrow(u_0-ia)\rightarrow C_{\rho}^{-}
\rightarrow(u_0-ia)\rightarrow(+\infty-2ia)\rightarrow+\infty,$$
$$\Gamma^{+}_{\rho,h}:\ h\rightarrow(h+2ia)\rightarrow(u_1+ia)\rightarrow C_{\rho}^{+}
\rightarrow(u_1+ia)\rightarrow(+\infty+2ia)\rightarrow+\infty.$$

\begin{figure}[htp]
\centerline{\includegraphics[width=14cm]{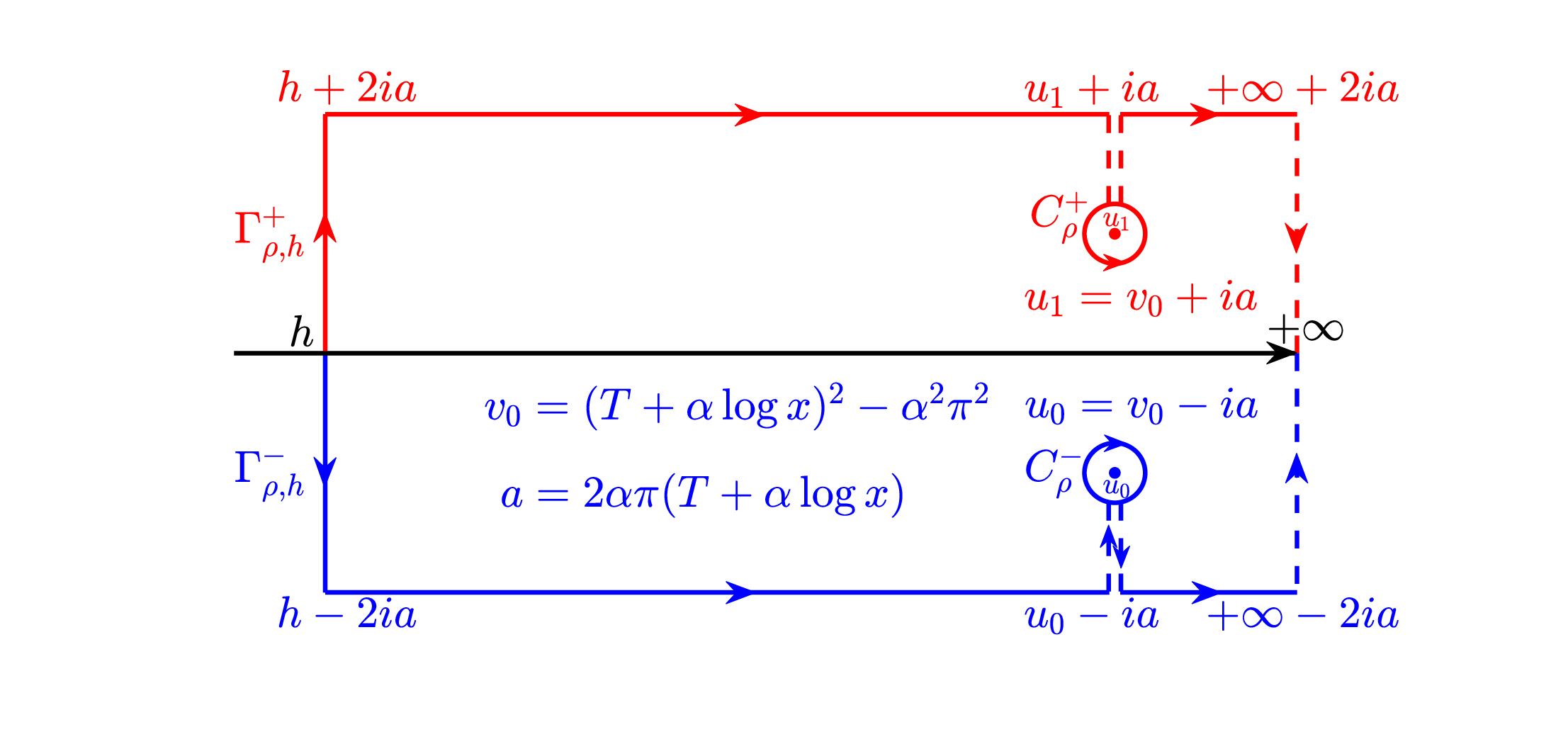}}
\caption{The integral contours $\Gamma^{-}_{\rho,h}$ (blue) and $\Gamma^{+}_{\rho,h}$ (red). The poles nearest to the real line of $f(u,x)$ are $u_0$ and $u_1$.
Together with the straight line $[h,+\infty)$ in the opposite direction, they form two closed circuits, wherein $f(u,x)$ is holomorphic.}
\label{integral_contour}
\end{figure}

\noindent
The integrals on the vertical line segments $(u_0-ia)\rightleftharpoons C^{-}_{\rho}$ and $(u_1+ia)\rightleftharpoons C^{+}_{\rho}$ (not include $C^{\pm}_{\rho}$) are canceled.

Analogously, we use in \eqref{eq:fourier_positive11111} the fact that
$$
\lim_{A\rightarrow +\infty}\int_{A\pm2ia}^{A}
f(u,x)e^{\pm i\frac{2n\pi}{h}u}\mathrm{d}u=0
$$
by $\frac{1}{|\sqrt{A\pm it}|}\le \frac{1}{\sqrt{A}}$ and \eqref{eq:sqrtr} that for $A>T$
$$
\left|\frac{xe^{\sqrt{A\pm it}-T}}{e^{\frac{1}{\alpha}(\sqrt{A\pm it}-T)}+x}\right|\le  \frac{1}{e^{\frac{1}{\kappa}\Re(\sqrt{A\pm it}-T)}-e^{-\Re(\sqrt{A\pm it}-T)}}\le
 \frac{1}{e^{\frac{1}{\kappa}(\sqrt{A}-T)}-1}
$$
and then
$$
\left|\int_{A\pm2ia}^{A}
f(u,x)e^{\pm i\frac{2n\pi}{h}u}\mathrm{d}u\right|=\left|\int_{0}^{2a}
f(A\pm it,x)e^{-\frac{2n\pi}{h}t}\mathrm{d}t\right|\le \frac{a}{\sqrt{A}(e^{\frac{1}{\kappa}(\sqrt{A}-T)}-1)}.
$$
Thus, the integrals in \eqref{eq:fourier_positive} can be bounded uniformly as follows
\begin{align*}
&\bigg{|}\int_{0}^{2a}\big{[}f(h+it,x)-f(h-it,x)\big{]}
e^{-\frac{2n\pi}{h}t}\mathrm{d}t\bigg{|}\\
=&\left(\frac{e^{h}}{h^2}
\int_0^1te^{-\frac{2n\pi}{h}t}\mathrm{d}t
+\frac{e^{\sqrt{h}}}{\sqrt{h}}\int_1^{+\infty}\frac{e^{\sqrt{t}}}{\sqrt{t}}
e^{-\frac{2n\pi}{h}t}\mathrm{d}t\right)\mathcal{O}(e^{-T}), \hspace{1.3cm} \mbox{(Lemma \ref{la3})} \\
&\bigg{|}\int_{C^{\pm}_{\rho}}
f(u,x)e^{\pm i\frac{2n\pi}{h}u}\mathrm{d}u\bigg{|}
=e^{(\rho-a)\frac{2n\pi}{h}}x^{\alpha}\mathcal{O}(1),\hspace{3cm} \mbox{(Lemma \ref{lemma_inte_circ})}\\
&\bigg{|}
\int_{h\pm 2ia}^{+\infty\pm 2ia}f(u,x)e^{\pm i\frac{2n\pi}{h}u}\mathrm{d}u\bigg{|}
=e^{-\frac{4n\pi}{h}a}x^{\alpha}\mathcal{O}(1),\hspace{2.5cm} \mbox{(Lemma \ref{infty})}
\end{align*}
respectively, where the constants in the above $\mathcal{O}$ terms are independent of $x$, $\alpha$, $\sigma$, $\rho$, $n$ and $T$.  Then letting $\rho\rightarrow0$, it follows
for $n\ge 1$  that
\begin{align}\label{eq:fourier_minus}
&h|c_n|=\big{|}\mathfrak{F}[\hat f]\big{(}\frac{2 n\pi}{h}\big{)}\big{|}\\
=&\left(\frac{e^{\sqrt{h}}}{h^2}
\int_0^1te^{-\frac{2n\pi}{h}t}\mathrm{d}t
+\frac{e^{\sqrt{h}}}{\sqrt{h}}\int_1^{+\infty}\frac{e^{\sqrt{t}}}{\sqrt{t}}
e^{-\frac{2n\pi}{h}t}\mathrm{d}t\right)\mathcal{O}(e^{-T})+e^{-\frac{2n\pi}{h}a}x^{\alpha}\mathcal{O}(1).\notag
\end{align}

In the analogous way, from \eqref{eq:fourier_c} the estimate \eqref{eq:fourier_minus} for $n\le-1$ holds with $e^{-\frac{2n\pi}{h} t}$ and $e^{-\frac{2n\pi}{h}a}$ replaced by $e^{\frac{2n\pi}{h}t}$ and $e^{\frac{2n\pi}{h}a}$, respectively. Thus, we have
\begin{align}
&h\bigg|\sum_{n\ne 0}c_n\bigg|=\bigg|\sum_{n\ne0}\mathfrak{F}[\hat f]\big{(}\frac{2 n\pi }{h}\big{)}\bigg|\notag\\
=&\left(\frac{e^{h}}{h^2}\sum_{n=1}^{\infty}
\int_0^12te^{-\frac{2n\pi}{h}t}\mathrm{d}t
+\frac{e^{\sqrt{h}}}{\sqrt{h}}\int_1^{+\infty}\frac{e^{\sqrt{t}}}{\sqrt{t}}
\frac{2}{e^{\frac{2\pi}{h}t}-1}\mathrm{d}t\right)\mathcal{O}(e^{-T})+\frac{x^{\alpha}\mathcal{O}(1)}{e^{\frac{2\pi a}{h}}-1}\notag\\
=&e^{h}\left(\frac{-1}{2\pi}\sum_{n=1}^{\infty}\frac{2}{nhe^{\frac{2n\pi}{h}}}
-\frac{1}{4\pi^2}\sum_{n=1}^{\infty}\frac{2}{n^2e^{\frac{2n\pi}{h}}}
+\frac{1}{4\pi^2}\sum_{n=1}^{\infty}\frac{2}{n^2}\right)\mathcal{O}(e^{-T})\label{00000000000}\\
&+e^{\sqrt{h}}\left(\sqrt{\frac{1}{2\pi}}
\int_{\sqrt{\frac{2\pi}{h}}}^{+\infty}\frac{4e^{\sqrt{\frac{h}{2\pi}}y}}{e^{y^2}-1}\mathrm{d}y\right)\mathcal{O}(e^{-T})
+\frac{x^{\alpha}\mathcal{O}(1)}{e^{\frac{2\pi a}{h}}-1},\notag\\
=&e^h\mathcal{O}(e^{-T})
+\frac{x^{\alpha}\mathcal{O}(1)}{e^{\frac{2\pi a}{h}}-1},\label{111}
\end{align}
where we used in \eqref{00000000000} the convergence of the series
$\sum_{n=1}^{\infty}\frac{2}{nhe^{\frac{2n\pi}{h}}}(<\sum_{n=1}^{\infty}\frac{1}{n^2\pi}$ by $e^{\frac{2n\pi}{h}}>1+\frac{2n\pi}{h}>\frac{2n\pi}{h}$), $\sum_{n=1}^{\infty}\frac{2}{n^2e^{\frac{2n\pi}{h}}}$,
$\sum_{n=1}^{\infty}\frac{2}{n^2}$, which can be bounded for all $\alpha\in(0,1)$ and $\sigma>0$,
and the fact for the integral
$\int_{\sqrt{2\pi/h}}^{+\infty}\frac{4e^{\sqrt{h/(2\pi)}t}}
{e^{t^2}-1}\mathrm{d}t$ that
\begin{align*}
  \int_{\sqrt{\frac{2\pi}{h}}}^{+\infty}\frac{e^{\sqrt{\frac{h}{2\pi}}y}}{e^{y^2}-1}\mathrm{d}y
  \le&\int_{0}^{1}\frac{e^{\sqrt{\frac{h}{2\pi}}y}}{e^{\frac{2\pi}{h}}-1}\mathrm{d}y
  +\int_{1}^{+\infty}\frac{e^{\sqrt{\frac{h}{2\pi}}y}}{\frac{1}{2}e^{y^2}}\mathrm{d}y\\
  \le&\frac{h}{2\pi}\sqrt{\frac{2\pi}{h}}(e^{\sqrt{\frac{h}{2\pi}}}-1)
  +2e^{\frac{h}{8\pi}}\int_{-\infty}^{+\infty}e^{-\left(t-\frac{1}{2}\sqrt{\frac{h}{2\pi}}\right)^2}\mathrm{d}t\\
  =&\big(e^{\sqrt{h}}+e^{\frac{h}{8\pi}}\big)\mathcal{O}(1)
\end{align*}
to get \eqref{111} by
$e^{\sqrt{h}}\big(e^{\sqrt{h}}+e^{\frac{h}{8\pi}}\big)=e^h\mathcal{O}(1)$
for all $h>0$.
\end{proof}

\vspace{0.66cm}
It is worthy of noting that by applying \eqref{eq:inequ},
\begin{align*}
\int_{N_th}^{+\infty}f(u,x)\mathrm{d}u
\le& \int_{(\kappa+1)^2T^2}^{+\infty}f(u,x)\mathrm{d}u
\le \frac{\sin(\alpha\pi)}{\alpha\pi}\int_{\kappa T}^{+\infty}e^{-\frac{1}{\kappa}t}\mathrm{d}t\\
=&\frac{\sin((1-\alpha)\pi)}{(1-\alpha)\pi}e^{-T}<e^{-T}
\end{align*}
 and
$$
f(h,x)\le \frac{e^{\sqrt{h}}}{2\sqrt{h}}e^{-T},\quad \int_0^hf(u,x)\mathrm{d}u\le \frac{e^{-T}\sin(\alpha\pi)}{\alpha\pi}\int_0^h\frac{e^{\sqrt{u}}}{2\sqrt{u}}\mathrm{d}u\le  e^{\sqrt{h}}e^{-T},
$$
we get
\begin{align}\label{eq:connect}
\int_{-\infty}^{+\infty}\hat f(u,x)\mathrm{d}u
=2hf(h,x)+2\int_h^{+\infty}f(u,x)\mathrm{d}u
=2I(x)+ e^{h}\mathcal{O}(e^{-T}).
\end{align}

On the other hand, it is obvious that
\begin{align}\label{eq:quadratureerrorformula_tapered}
\int_{-\infty}^{+\infty}\hat f(u,x)\mathrm{d}u
=&\sum_{j=-\infty}^{+\infty}\hat{f}(jh,u)+E_Q(x)
=2\sum_{j=1}^{N_t}f(jh,x)+E_{TD}(x)+E_Q(x)\\
=&2S(x)+E_{TD}(x)+E_Q(x)\notag
\end{align}
with
\begin{align}\label{definitionofE_Q(x)}
E_{Q}(x)=\int_{-\infty}^{+\infty}\hat f(u,x)\mathrm{d}u
-h\sum_{j=-\infty}^{+\infty}\hat f(jh,x)
=-\sum_{n\ne0}\mathfrak{F}[\hat f]
\big(\frac{2n\pi}{h}\big)
\end{align}
and
\begin{align*}
0\le E_{TD}(x)=&\hat{f}(0,x)h+2h\sum_{j=N_t+1}^{+\infty}\hat{f}(jh,x)
=hf(h,x)+2h\sum_{j=N_t+1}^{+\infty}f(jh,x)\\
\le & \frac{\sqrt{h}}{2}e^{\sqrt{h}}e^{-T}
+2h\frac{\sin{(\alpha\pi)}}{\alpha\pi}
\sum_{j=N_t+1}^{+\infty}\frac{1}{2\sqrt{jh}}
e^{-\frac{1}{\kappa}(\sqrt{jh}-T)}\\
\le& \frac{\sqrt{h}}{2}e^{\sqrt{h}}e^{-T}+\frac{2\sin{(\alpha\pi)}}{\alpha\pi}
\int_{N_th}^{+\infty}
\frac{e^{-\frac{1}{\kappa}(\sqrt{u}-T)}}{2\sqrt{u}}\mathrm{d}u\notag\\
\le& \frac{\sqrt{h}}{2}e^{\sqrt{h}}e^{-T}+\frac{2\sin(\alpha\pi)}{(1-\alpha)\pi}e^{-T}\\
=&e^{h}\mathcal{O}(e^{-T})=e^{\sigma\sqrt{2M_0}}\mathcal{O}(e^{-T})
\end{align*}
since $h=\alpha^2\sigma^2\le\sigma^2\le\sigma\sqrt{2M_0}$ by the definition of $M_0$ \eqref{DefinitionOfM0}.

Thus, combining \eqref{eq:connect} and \eqref{eq:quadratureerrorformula_tapered} yields that
\begin{align*}
I(x)-S(x)=e^h\mathcal{O}(e^{-T})+\frac{E_{TD}(x)}{2}+\frac{E_Q(x)}{2}
=e^{\sigma\sqrt{2M_0}}\mathcal{O}(e^{-T})+\mathcal{O}\left(\frac{x^\alpha}{e^{\frac{2\pi a}{h}}-1}\right).
\end{align*}
from the definition of $E_Q(x)$ \eqref{definitionofE_Q(x)} and the bound of the sum of discrete Fourier transforms \eqref{eq:conclusionOfFouriersum}.
Clearly, the constants in the above $\mathcal{O}$ terms are independent of $T$, $n$, $h$, $x$, $\alpha$ and $\sigma$.

\bigskip
 {\bf Case \uppercase\expandafter{\romannumeral2}}: For $x\in[x^*,1]\setminus\Upsilon$, we see that $v_0=\Re(u_0)=(T+\alpha\log{x})^2-\alpha^2\pi^2\le M_0h$ and $e^{\frac{1}{\alpha}(\gamma-T)}\le x\le e^{\frac{1}{\alpha}(\sqrt{M_0h+\alpha^2\pi^2}-T)}\le e^{\frac{1}{\alpha}(\sqrt{2M_0h}-T)}:=x_0$, which
implies
\begin{align*}
|I(x)|
\le&\int_{0}^{(\kappa+1)^2T^2}f(u,x)\mathrm{d}u+e^{-T}\notag
=\frac{\sin(\alpha\pi)}{\alpha\pi}\int_{-T}^{\kappa T}
\frac{xe^{t}}
{e^{\frac{1}{\alpha}t}+x}\mathrm{d}t+e^{-T}\\
\le&\frac{x_0\sin(\alpha\pi)}{\alpha\pi}\int_{-T}^{\kappa T}
e^{(1-\frac{1}{\alpha})t}\mathrm{d}t+e^{-T}
=e^{\frac{1}{\alpha}(\sqrt{2M_0h}-T)}\frac{\sin{\alpha\pi}}{(1-\alpha)\pi}
\big(e^{\frac{T}{\kappa}}+e^{-T}\big)+e^{-T}\\
\le& \frac{e^{-T}\sin(\alpha\pi)}{(1-\alpha)\pi}e^{\frac{1}{\alpha}\sqrt{2M_0h}}+e^{-T}
=\frac{e^{-T}\sin(\alpha\pi)}{(1-\alpha)\pi}e^{\sigma\sqrt{2M_0}}+e^{-T}.
\end{align*}

Analogously, we have
\begin{align*}
|r_{N_t}(x)|=&\frac{\sin{(\alpha\pi)}}{\alpha\pi}\left|h\sum_{j=1}^{N_t}\frac{1}{2\sqrt{jh}}
\frac{xe^{\sqrt{jh}-T}}{e^{\frac{1}{\alpha}(\sqrt{jh}-T)}+x}\right|
\le\frac{x_0\sin{(\alpha\pi)}}{\alpha\pi}h\sum_{j=1}^{N_t}
\frac{e^{-\frac{1}{\kappa}\left(\sqrt{jh}-T\right)}}{2\sqrt{jh}}\notag\\
\le&\frac{x_0\sin{(\alpha\pi)}}{\alpha\pi}\int_{0}^{+\infty}e^{-\frac{1}{\kappa}(\sqrt{u}-T)}\mathrm{d}u
=e^{\frac{1}{\alpha}(\sqrt{2M_0h}-T)}\frac{\sin{(\alpha\pi)}}{(1-\alpha)\pi}e^{\frac{T}{\kappa}}\\
\le& \frac{\sin(\alpha\pi)}{(1-\alpha)\pi}e^{\sigma\sqrt{2M_0}}e^{-T}.
\end{align*}
and then
\begin{align*}
|I(x)-r_{N_t}(x)|=|I(x)-S(x)|\le 2\frac{\sin(\alpha\pi)}{(1-\alpha)\pi}e^{\sigma\sqrt{2M_0}}e^{-T}+e^{-T}
\end{align*}
uniformly for $x\in[x^*,1]\setminus\Upsilon$.

Thus together with Case \uppercase\expandafter{\romannumeral1} and Case \uppercase\expandafter{\romannumeral2}, from the definition of $M_0$ we see $e^h\le e^{\sigma\sqrt{2M_0}}$ and then for all $x\in [x^*,1]$
\begin{align}\label{eq:quaderrortaper}
I(x)-S(x)=e^{\sigma \sqrt{2M_0}}\mathcal{O}(e^{-\sigma\alpha\sqrt{N_1}})+\mathcal{O}\left(\frac{x^\alpha}{e^{\frac{2\pi a}{h}}-1}\right).
\end{align}

Fig. \ref{quadratureerr_tapered} illustrates the quadrature errors $\max_{x\in [0,1]}|S(x)-I(x)|$ of the rectangular rule  with various values of $h$ and $\alpha$. From Fig. \ref{quadratureerr_tapered}, we see that the quadrature errors with respect to $h=4\pi^2\alpha$ are smaller than those to $h=8\pi^2\alpha$ and $h=\pi^2\alpha$ for fixed $\alpha$.

\section{Convergence rates of the LPs}\label{sec:4}
Having obtained the exponential decay orders as stated in \eqref{errquad_uniform} and \eqref{eq:quaderrortaper}, we can now derive the convergence orders of the rational approximations $r_N(x)$ \eqref{eq:rat}
and $\bar{r}_N(x)$ \eqref{eq: brateounif} and then  complete the proofs of Theorem \ref{mainthm} and Conjecture \ref{Conjecture 3.1}.

\begin{lemma}\label{eq:Paly}
Let $h=\sigma^2\alpha^2$, $\eta:=\frac{4\pi^2\alpha}{\sigma^2\alpha^2}=\frac{4\pi^2}{\sigma^2\alpha}$,
and $Q(x)=\frac{x^\alpha}{e^{2\pi a/h}-1}$ for $x\in [e^{\frac{1}{\alpha}(\gamma-T)},1]$ with $a=2\alpha\pi(T+\alpha\log{x})$ being the distance of the nearest poles of $f(u,x)$
to the real axis. Then it holds uniformly for $x\in [e^{\frac{1}{\alpha}(\gamma-T)},1]$ that
\begin{align}\label{eq:qerr1}
\left\{\begin{array}{ll}
\frac{1}{e^{\eta T}-1}\le Q(x)\le \frac{e^\gamma e^{-T}}{e^{\eta\gamma}-1},&\eta\ge 1 \,\,({\rm i.e.,}\, h\le 4\pi^2\alpha),\\
\frac{(1-\eta)^{1-\frac{1}{\eta}}e^{-T}}{\eta}\le Q(x)\le \max\{\frac{1}{e^{\eta T}-1},\frac{e^\gamma e^{-T}}{e^{\eta\gamma}-1}\},&\eta<1\,\,({\rm i.e.,}\, h> 4\pi^2\alpha).\end{array}\right.
\end{align}
\end{lemma}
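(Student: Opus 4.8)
The plan is to collapse the two-sided estimate into a one-variable calculus problem via the substitution $s=\alpha\log x$. The hypothesis $x\in[e^{(\gamma-T)/\alpha},1]$ becomes $s\in[\gamma-T,0]$, and since $\frac{2\pi a}{h}=\frac{4\pi^2(T+s)}{\sigma^2\alpha}=\eta(T+s)$ while $x^\alpha=e^s$, putting $t:=T+s\in[\gamma,T]$ gives
\[
Q(x)=e^{-T}\,\phi(t),\qquad \phi(t):=\frac{e^t}{e^{\eta t}-1}.
\]
Because $\gamma>1$ (this is exactly how $\gamma$ was introduced just before the lemma), we have $t\ge\gamma>0$ throughout $[\gamma,T]$, so $\phi$ is smooth and strictly positive there, and the whole lemma reduces to locating $\min_{[\gamma,T]}\phi$ and $\max_{[\gamma,T]}\phi$.

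Next I would analyze the logarithmic derivative
\[
\frac{\phi'(t)}{\phi(t)}=1-\frac{\eta e^{\eta t}}{e^{\eta t}-1}=1-\frac{\eta}{1-e^{-\eta t}}.
\]
Since $t\mapsto \eta/(1-e^{-\eta t})$ is strictly decreasing on $(0,\infty)$, the ratio $\phi'/\phi$ is strictly increasing on $(0,\infty)$, tends to $-\infty$ as $t\to0^{+}$, and tends to $1-\eta$ as $t\to+\infty$. Both cases of the lemma follow from this single monotonicity fact.

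If $\eta\ge1$, then $\phi'/\phi<1-\eta\le0$ for every $t>0$ (using $1/(1-e^{-\eta t})>1$), so $\phi$ is strictly decreasing; hence $\phi(T)\le\phi(t)\le\phi(\gamma)$ on $[\gamma,T]$, and multiplying by $e^{-T}$ together with $e^{-T}\phi(T)=1/(e^{\eta T}-1)$ and $e^{-T}\phi(\gamma)=e^{\gamma}e^{-T}/(e^{\eta\gamma}-1)$ yields the first line of \eqref{eq:qerr1}. If $\eta<1$, then $\phi'/\phi$ has a unique zero at $t^{*}:=-\eta^{-1}\log(1-\eta)>0$, so $\phi$ decreases on $(0,t^{*})$ and increases on $(t^{*},\infty)$, with global minimum $\phi(t^{*})$. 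Evaluating $\phi$ there using $e^{\eta t^{*}}=1/(1-\eta)$ (hence $e^{t^{*}}=(1-\eta)^{-1/\eta}$ and $e^{\eta t^{*}}-1=\eta/(1-\eta)$) gives $\phi(t^{*})=(1-\eta)^{1-1/\eta}/\eta$; since $\phi\ge\phi(t^{*})$ on all of $(0,\infty)$, we get $Q(x)=e^{-T}\phi(t)\ge (1-\eta)^{1-1/\eta}e^{-T}/\eta$, the claimed lower bound. For the upper bound, a function that is decreasing-then-increasing on a closed interval attains its maximum at an endpoint, so $\phi(t)\le\max\{\phi(\gamma),\phi(T)\}$, i.e.\ $Q(x)\le\max\{e^{\gamma}e^{-T}/(e^{\eta\gamma}-1),\,1/(e^{\eta T}-1)\}$.

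The argument is elementary and introduces no slack, which is why the stated constants are sharp (attained at the endpoints, and at $t^{*}$ when $\eta<1$). The only real care needed is bookkeeping: keeping the substitution $t=T+\alpha\log x$ consistent, invoking $\gamma>1$ so that $e^{\eta t}-1$ never vanishes on the range of $t$, and correctly simplifying $\phi(t^{*})$. I do not anticipate any genuine obstacle beyond this.
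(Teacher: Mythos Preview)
Your proof is correct and follows essentially the same route as the paper: both arguments rewrite $Q$ so that the exponent becomes $\eta(T+\alpha\log x)$, differentiate, and read off the extrema from the sign of the derivative; your substitution $t=T+\alpha\log x$ and use of $\phi'/\phi$ is simply a slightly tidier packaging of the paper's direct computation of $\frac{d}{dx}Q(x)$. One small bonus of your version is that the lower bound in the case $\eta<1$ follows from the global inequality $\phi\ge\phi(t^{*})$ on $(0,\infty)$, so you need not check whether the critical point actually lies in $[\gamma,T]$.
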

\begin{proof}
From the definition of $a$, $Q(x)$ can be written as $Q(x)=\frac{x^\alpha}{x^{\eta\alpha} e^{\eta T}-1}$, then we have
\begin{align*}
\frac{\mathrm{d}}{\mathrm{d}x}Q(x)=\frac{(1-\eta)x^{\eta\alpha} e^{\eta T}-1}{(x^{\eta\alpha} e^{\eta T}-1)^2}\alpha x^{\alpha-1},
\end{align*}
which implies $Q(1)\le Q(x)\le Q(x^*)$ if $\eta\ge1$, i.e., the first case of \eqref{eq:qerr1}, while $Q(x_m)\le Q(x)\le\max\left\{Q(x^*),\ Q(1)\right\}$ if $\eta<1$, i.e., the second case of \eqref{eq:qerr1}, where $x_m=\left[(1-\eta)^{-\frac{1}{\eta}}e^{-T}\right]^{\frac{1}{\alpha}}$ is the minimum point of $Q(x)$ on $[x^*,1]$ and $x^*=e^{\frac{1}{\alpha}(\gamma-T)}$.
\end{proof}

{\bf Proof of  Theorem \ref{mainthm}}: Note that
\begin{align*}
\frac{e^\gamma e^{-T}}{e^{\eta\gamma}-1}< \frac{e^\gamma e^{-T}}{\eta\gamma}\le \frac{\sigma^2\alpha e^\gamma e^{-T}}{4\pi^2}<\frac{\sigma^2 e^\gamma e^{-T}}{2}<e^{\sigma} e^\gamma e^{-T}<e^{\sigma\sqrt{2M_0}} e^\gamma e^{-T}
\end{align*}
and $e^{-T}<\frac{1}{e^{\eta T}-1}$ for $\eta<1$ and $T>0$,
then from Lemma \ref{eq:Paly}, we see that
\begin{align*}
\frac{x^\alpha}{e^{2\pi a/h}-1}=&\left\{\begin{array}{ll}
\mathcal{O}(e^{-T}),& h\le 4\pi^2\alpha\\
\mathcal{O}\left(\max\left\{\frac{1}{e^{\eta T}-1},e^{\sigma\sqrt{2M_0}} e^\gamma e^{-T}\right\} \right),& h> 4\pi^2\alpha \end{array}\right\}\\
=&\left\{\begin{array}{ll}
\mathcal{O}(e^{-T}),& h\le 4\pi^2\alpha,\\
e^{\sigma\sqrt{2M_0}} e^\gamma\frac{\mathcal{O}(1)}{e^{\eta T}-1},& h> 4\pi^2\alpha, \end{array}\right.
\end{align*}
which, by the uniform boundedness of $\gamma$ together with  Theorem \ref{la1} and \eqref{eq:quaderrortaper}, follows  
\begin{align}\label{eq:prove_thm1esti}
\|I-r_{N_t}\|_{\infty}
=&e^{\sigma \sqrt{2M_0}}\mathcal{O}(e^{-T})+
\left\{\begin{array}{ll}
\mathcal{O}(e^{-T}),& h\le 4\pi^2\alpha\\
e^{\sigma \sqrt{2M_0}}\frac{\mathcal{O}(1)}{e^{\eta T}-1},& h> 4\pi^2\alpha\end{array}\right\}\notag\\
=&e^{\sigma \sqrt{2M_0}}\cdot\left\{\begin{array}{ll}\mathcal{O}(e^{- T}),& h\le 4\pi^2\alpha. \\
\frac{\mathcal{O}(1)}{e^{\eta T}-1},& h> 4\pi^2\alpha.\end{array}\right.
\end{align}
Applying $T=\sqrt{N_1h}$, $h=\sigma^2\alpha^2$ and
 \begin{align*}
 \sqrt{N_1}=\sqrt{N-N_2}=\sqrt{N}\left(1+\mathcal{O}\left(\frac{N_2}{N}\right)\right)=\sqrt{N}+\mathcal{O}(1)
 =:\sqrt{N}+c_N
 \end{align*}
 where $c_N(<0)$ is uniformly bounded and independent of $N$,
from \eqref{eq:prove_thm1esti} it has for $h\le 4\pi^2\alpha$ (i.e., $\sigma\le \frac{2\pi}{\sqrt{\alpha}}$) that
\begin{align*}
 \|I-r_{N_t}\|_{\infty}=e^{\sigma \sqrt{2M_0}}\mathcal{O}(e^{-T})
 =e^{\sigma \sqrt{2M_0}}\mathcal{O}(e^{-\sigma\alpha\sqrt{N}})
 \end{align*}
since
\begin{align*}
e^{- T}= e^{-\sigma\alpha\sqrt{N}-c_N\sqrt{h}}=\mathcal{O}(e^{-\sigma\alpha\sqrt{N}}),
\end{align*}
thus we prove the first case of \eqref{eq: brateo} with $C=1$ from $x^\alpha=r_{N_1}(x)+P_{N_2}(x)+E_{Q}(x)+E_T(x)+E_{PA}(x)$.

When $h>4\pi^2\alpha$ (i.e., $\eta<1$), we have by $\sqrt{h}=\sigma\alpha<\sigma\sqrt{2M_0}$ that
\begin{align*}
e^{-T}<e^{ -\eta T}= e^{-\frac{4\pi^2}{\sigma}\sqrt{N}-c_N\eta \sqrt{h}}\le \frac{e^{|c_N|}e^{\sigma \sqrt{2M_0}}}{e^{\frac{4\pi^2}{\sigma}\sqrt{N}}-1},
\end{align*}
which implies that
 \begin{align}\label{case2-1thm1.1}
 \|I-r_{N_t}\|_{\infty}=e^{\sigma \sqrt{2M_0}}\frac{\mathcal{O}(1)}{e^{\frac{4\pi^2}{\sigma}\sqrt{N}}-1},
\end{align}
and then
the second case of \eqref{eq: brateo} with $C=1$.

Subsequently, we prove \eqref{eq: brateounif}. By \eqref{errquad_uniform} and $x^\alpha=\bar{r}_{N_1}(x)+\bar{P}_{N_2}(x)+\bar{E}_{Q}(x)+\bar{E}_T(x)+\bar{E}_{PA}(x)$,
$T=N_1\hbar$, $\hbar=\frac{\sigma\alpha}{\sqrt{N_1}}$ it follows
directly for the case of $C=1$ that
\begin{equation*}
|\bar{r}_N(x)-x^\alpha|=\mathcal{O}(e^{-T})+\frac{\mathcal{O}(1)}{e^{\frac{2\alpha\pi^2}{\hbar}}-1}
=\left\{\begin{array}{ll}
\mathcal{O}(e^{-\sigma\alpha\sqrt{N}}),&\sigma\le \frac{\sqrt{2}\pi}{\sqrt{\alpha}},\\
\frac{\mathcal{O}(1)}{e^{\frac{2\pi^2}{\sigma}\sqrt{N}}-1},&\sigma> \frac{\sqrt{2}\pi}{\sqrt{\alpha}}.
\end{array}\right.
\end{equation*}

For the general cases $C>0$ in \eqref{eq:tapered2} and \eqref{eq:uniform}, we rewrite the integral \eqref{eq:int} as
\begin{align*}
x^\alpha &=\frac{e^{\delta_0}\sin(\alpha\pi)}{\alpha\pi}
\int_{-\infty}^{+\infty}
\frac{xe^{\tilde t}}{Ce^{\frac{1}{\alpha}\tilde t}+x}\mathrm{d}\tilde t
=\frac{C^{\alpha}\sin(\alpha\pi)}{\alpha\pi}
\int_{-\infty}^{+\infty}
\frac{\hat{x}e^{\tilde t}}{e^{\frac{1}{\alpha}\tilde t}+\hat{x}}\mathrm{d}\tilde t,
\end{align*}
where we make the substitutions $\hat x=x/C\in[0,1/C]$ and $t=\tilde t+\delta_0$ such that $C=e^{\frac{\delta_0}{\alpha}}$. By the same procedure
on the last integral (with $\hat x$ instead of $x$ in the integrand), we obtain the desired results \eqref{eq: brateo} and \eqref{eq: brateounif} of Theorem \ref{mainthm} in exactly the same manner.

This completes the proof.

\vspace{.5cm}
In particular, if $h=4\pi^2\alpha$ (i.e., $\sigma=\frac{2\pi}{\sqrt{\alpha}}$ in \eqref{eq: brateo}), then from Theorem \ref{mainthm} it establishes Conjecture \ref{Conjecture 3.1}.

\begin{corollary}\label{mainthm1}
There exist coefficients $\{a_j\}_{j=1}^{N_1}$ and a polynomial $P_{N_2}$ with
$N_2$ $=\mathcal{O}(\sqrt{N_1})$, for which  $r_N(x)$ \eqref{eq:rat} having
tapered lightning poles \eqref{eq:tapered2} with $\sigma = \frac{2\pi}{\sqrt{\alpha}}$
satisfies:
\begin{equation*}
|r_N(x)-x^\alpha|=\mathcal{O}(e^{-2\pi\sqrt{\alpha N}})
\end{equation*}
as $N \rightarrow \infty$, uniformly for $x\in [0, 1]$ and the constant in the $\mathcal{O}$ term is independent of $\alpha$ and $N$.
\end{corollary}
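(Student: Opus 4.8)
The plan is to read off Corollary~\ref{mainthm1} as the single specialization $\sigma=\frac{2\pi}{\sqrt{\alpha}}$ of Theorem~\ref{mainthm}: no new quadrature analysis is required, only the bookkeeping check that this choice of $\sigma$ lands exactly on the favourable branch of \eqref{eq: brateo}. First I would substitute $\sigma=\frac{2\pi}{\sqrt{\alpha}}$ into the step length $h=\sigma^2\alpha^2$, obtaining $h=4\pi^2\alpha$, i.e.\ $\sigma=\frac{2\pi}{\sqrt{\alpha}}$ is the boundary case (the parameter $\eta$ of Lemma~\ref{eq:Paly} equals $1$). In particular the hypothesis $\sigma\le\frac{2\pi}{\sqrt{\alpha}}$ holds, so the first line of \eqref{eq: brateo} in Theorem~\ref{mainthm} applies and gives, uniformly for $x\in[0,1]$,
\begin{equation*}
|r_N(x)-x^\alpha|=e^{\sigma\sqrt{2M_0}}\,\mathcal{O}\!\left(e^{-\sigma\alpha\sqrt{N}}\right),
\end{equation*}
with $M_0$ as in \eqref{DefinitionOfM0}, the $\mathcal{O}$-constant absolute, and the polynomial degree $N_2=\mathcal{O}(\sqrt{N_1})$ carried over verbatim.

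Next I would simplify the two $\sigma$-dependent quantities. Since $\sigma\alpha=\frac{2\pi}{\sqrt{\alpha}}\cdot\alpha=2\pi\sqrt{\alpha}$, the exponential factor is exactly $e^{-\sigma\alpha\sqrt{N}}=e^{-2\pi\sqrt{\alpha N}}$, the advertised root-exponential rate. For the prefactor I would observe that once $\alpha\in(0,1)$ is fixed, $\sigma=\frac{2\pi}{\sqrt{\alpha}}$ is fixed and hence so is $M_0$ by \eqref{DefinitionOfM0}; thus $e^{\sigma\sqrt{2M_0}}$ is a finite constant, independent of $N$ and of $x$, that can be folded into the $\mathcal{O}$-term (one also checks $\sigma\sqrt{2M_0}=\frac{1}{\alpha}\sqrt{2M_0h}$ with $\sqrt{2M_0h}$ bounded for $\alpha\in(0,1)$, so it stays finite for every admissible $\alpha$). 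Finally, since $N=N_1+N_2$ with $N_2=\mathcal{O}(\sqrt{N_1})$ we have $\sqrt{N_1}=\sqrt{N}+\mathcal{O}(1)$, hence $T=\sigma\alpha\sqrt{N_1}=2\pi\sqrt{\alpha N_1}$ satisfies $e^{-T}=\mathcal{O}\!\left(e^{-2\pi\sqrt{\alpha N}}\right)$; this is how the $N_1$ appearing in the quadrature estimates \eqref{errquad_uniform} and \eqref{eq:quaderrortaper} is traded for $N$. Collecting these remarks gives $|r_N(x)-x^\alpha|=\mathcal{O}\!\left(e^{-2\pi\sqrt{\alpha N}}\right)$ uniformly on $[0,1]$ as $N\to\infty$, which is precisely Conjecture~\ref{Conjecture 3.1}.

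At the level of the corollary there is essentially no obstacle: every difficulty has already been discharged in the proof of Theorem~\ref{mainthm} --- the Poisson-summation identity \eqref{QuadratureErrorfor_w}, the Paley--Wiener-type decay of the discrete Fourier transforms (Theorems~\ref{eq:thm} and \ref{Quadratrue_rat_uniform}), the monotonicity threshold $x^*$ of Theorem~\ref{la1}, and the splitting of $[x^*,1]$ into $\Upsilon$ and its complement. The one point deserving a sentence of care is verifying that the boundary value $\sigma=\frac{2\pi}{\sqrt{\alpha}}$ sits in the ``$\le$'' branch of \eqref{eq: brateo} rather than the ``$>$'' branch: here $\eta=1$, and Lemma~\ref{eq:Paly} still delivers $Q(x)\le\frac{e^{\gamma}e^{-T}}{e^{\gamma}-1}=\mathcal{O}(e^{-T})$ with the uniformly bounded $\gamma$ of Theorem~\ref{la1}, so the $1/(e^{4\pi^2\sqrt{N}/\sigma}-1)$ tail never materializes and the clean rate $e^{-2\pi\sqrt{\alpha N}}$ survives.
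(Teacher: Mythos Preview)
Your proposal takes exactly the paper's approach: the paper also obtains the corollary by the single substitution $\sigma=\frac{2\pi}{\sqrt{\alpha}}$ (equivalently $h=4\pi^2\alpha$) into Theorem~\ref{mainthm}, with no additional argument beyond one sentence.

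One small correction, though: your parenthetical attempt to show the prefactor $e^{\sigma\sqrt{2M_0}}$ is uniformly bounded in $\alpha$ does not go through. With $\sigma=\frac{2\pi}{\sqrt{\alpha}}$ the first entry in the maximum defining $M_0$ in \eqref{DefinitionOfM0} is $\sigma^2/4=\pi^2/\alpha$, so $M_0\ge 2\pi^2/\alpha$ and hence $\sigma\sqrt{2M_0}\ge \frac{2\pi}{\sqrt{\alpha}}\cdot\frac{2\pi}{\sqrt{\alpha}}=\frac{4\pi^2}{\alpha}\to\infty$ as $\alpha\to 0^+$. The boundedness of $\sqrt{2M_0h}$ that you note is correct but does not help once divided by $\alpha$. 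The paper does not address this uniformity in $\alpha$ either --- it simply asserts the corollary --- so your derivation is on the same footing as the paper's; just drop the incorrect parenthetical justification.
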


Additionally, by letting $\sigma=\frac{\sqrt{2}\pi}{\sqrt{\alpha}}$ in \eqref{eq: brateounif} we have from Theorem \ref{mainthm} that
\begin{corollary}\label{optimalrat1}
There exist coefficients $\{\bar{a}_j\}_{j=1}^{N_1}$ and a polynomial $\bar{P}_{N_2}$ with
$N_2$ $= \mathcal{O}(\sqrt{N_1})$, for which $\bar{r}_N(x)$ \eqref{LPbasedonuniformclupole}
having
uniform lightning poles \eqref{eq:uniform} with $\sigma = \frac{\sqrt{2}\pi}{\sqrt{\alpha}}$
satisfies:
\begin{equation*}
|\bar{r}_N(x)-x^\alpha|=\mathcal{O}(e^{-\pi\sqrt{2\alpha N}})
\end{equation*}
as $N \rightarrow \infty$, uniformly for $x\in [0, 1]$ and the constant in the $\mathcal{O}$ term is independent of $\alpha$ and
$N$.
\end{corollary}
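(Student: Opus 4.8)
The plan is to derive Corollary \ref{optimalrat1} as an immediate specialization of Theorem \ref{mainthm}, with no new analysis required: everything reduces to inserting the distinguished value $\sigma=\frac{\sqrt{2}\pi}{\sqrt{\alpha}}$ into the second estimate \eqref{eq: brateounif} for the uniform clustering \eqref{eq:uniform}. First I would let $\{\bar a_j\}_{j=1}^{N_1}$ and $\bar P_{N_2}$ be exactly the coefficients and polynomial produced by Theorem \ref{mainthm} for the poles \eqref{eq:uniform} with $C=1$ and this choice of $\sigma$; Theorem \ref{mainthm} already guarantees $\deg\bar P_{N_2}=\mathcal{O}(\sqrt{N_1})$, so that part of the claim is free.

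Next I would identify the rate. The chosen $\sigma$ satisfies $\sigma\le\frac{\sqrt{2}\pi}{\sqrt{\alpha}}$ with equality, so it sits precisely on the threshold separating the two branches of \eqref{eq: brateounif}, and the only computation I need is to check that both branches collapse to the same order there. On the one hand $\sigma\alpha=\frac{\sqrt{2}\pi}{\sqrt{\alpha}}\cdot\alpha=\pi\sqrt{2\alpha}$, so $e^{-\sigma\alpha\sqrt{N}}=e^{-\pi\sqrt{2\alpha N}}$; on the other hand $\frac{2\pi^2}{\sigma}=\pi\sqrt{2\alpha}$ as well, so $\frac{\mathcal{O}(1)}{e^{(2\pi^2/\sigma)\sqrt{N}}-1}=\frac{\mathcal{O}(1)}{e^{\pi\sqrt{2\alpha N}}-1}=\mathcal{O}(e^{-\pi\sqrt{2\alpha N}})$. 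Either way \eqref{eq: brateounif} gives $|\bar r_N(x)-x^\alpha|=\mathcal{O}(e^{-\pi\sqrt{2\alpha N}})$ uniformly for $x\in[0,1]$. For $C\neq1$ I would invoke the rescaling $\hat x=x/C$ (with $C=e^{\delta_0/\alpha}$) used at the end of the proof of Theorem \ref{mainthm}, which leaves the exponent unchanged.

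Finally I would record the uniformity: the last sentence of Theorem \ref{mainthm} asserts that the constants in its $\mathcal{O}$ terms are independent of $\alpha\in(0,1)$, $\sigma>0$ and $N$, hence — since here $\sigma$ is a fixed function of $\alpha$ — the constant in $\mathcal{O}(e^{-\pi\sqrt{2\alpha N}})$ is independent of $\alpha$ and $N$, as claimed. There is essentially no obstacle in this proof; the single point deserving a line of care is that $\sigma=\frac{\sqrt{2}\pi}{\sqrt{\alpha}}$ is a \emph{boundary} value for \eqref{eq: brateounif}, which I would dispatch by the remark just made, namely that the two cases of \eqref{eq: brateounif} agree on that boundary, so the choice of branch is immaterial. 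All the genuine work — the Poisson-summation estimate of the quadrature error in Theorem \ref{Quadratrue_rat_uniform} together with the polynomial-truncation bound for $\bar r_2$ — has already been carried out upstream.
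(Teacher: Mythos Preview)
Your proposal is correct and matches the paper's own approach: the corollary is stated immediately after Theorem \ref{mainthm} with the one-line justification ``by letting $\sigma=\frac{\sqrt{2}\pi}{\sqrt{\alpha}}$ in \eqref{eq: brateounif},'' and your verification that both branches of \eqref{eq: brateounif} collapse to $\mathcal{O}(e^{-\pi\sqrt{2\alpha N}})$ at this threshold value is exactly the (implicit) content behind that line.
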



\begin{appendix}
\section{Useful lemmas for the proof of Theorem \ref{mainthm}}\label{AppendixA}
We prove the three lemmas used in the proof of Theorem \ref{eq:thm} in Subsection \ref{subsec:3.2} only for the case $n\ge1$, and those for $n\le-1$ can be proved analogously.

\begin{lemma}\label{la3}
It holds
\begin{align*}
 &\bigg{|}\int_{0}^{2a}
\big[f(h+it,x)-f(h-it,x)\big]
e^{-\frac{2n\pi}{h}t}\mathrm{d}t\bigg{|}\\
=&\left(\frac{e^{h}}{h^2}
\int_0^1te^{-\frac{2n\pi}{h}t}\mathrm{d}t
+\frac{e^{\sqrt{h}}}{\sqrt{h}}\int_1^{+\infty}\frac{e^{\sqrt{t}}}{\sqrt{t}}
e^{-\frac{2n\pi}{h}t}\mathrm{d}t\right)\mathcal{O}(e^{-T})
\end{align*}
uniformly for $x\in \Upsilon$ and the constant in the above $\mathcal{O}$ term is independent of $T$, $n$, $x$, $h$, $\alpha$ and $\sigma$.
\end{lemma}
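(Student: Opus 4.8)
The plan is to reduce the estimate to a pointwise bound on $f(h+it,x)-f(h-it,x)$ over $t\in[0,2a]$ and then integrate against $e^{-2n\pi t/h}$. Since $\sqrt{\overline u}=\overline{\sqrt u}$ for the principal branch and every remaining factor of $f$ in \eqref{eq:fun} has real Taylor coefficients in $\sqrt u$, one has $f(h-it,x)=\overline{f(h+it,x)}$, hence $f(h+it,x)-f(h-it,x)=2i\,\Im f(h+it,x)$ and in particular $\big|f(h+it,x)-f(h-it,x)\big|\le 2\big|f(h+it,x)\big|$. I would split the range at $t=1$ and prove: on $[0,\min\{1,2a\}]$, $\big|f(h+it,x)-f(h-it,x)\big|\le C e^{-T}\frac{e^{h}}{h^{2}}t$; on $[1,2a]$ (vacuous when $2a<1$), $\big|f(h+it,x)-f(h-it,x)\big|\le C e^{-T}\frac{e^{\sqrt h}}{\sqrt h}\cdot\frac{e^{\sqrt t}}{\sqrt t}$. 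Multiplying by $e^{-2n\pi t/h}\ge 0$ and enlarging the two domains to $[0,1]$ and $[1,+\infty)$ then produces exactly the two terms on the right-hand side.

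For the piece near $t=0$ I would use that $\Im f(h,x)=0$ because $f(h,x)$ is real: writing $f(h+it,x)-f(h,x)=\int_0^t i f'(h+i\tau,x)\,\mathrm d\tau$ and again using $f(h-it,x)=\overline{f(h+it,x)}$ gives $\big|f(h+it,x)-f(h-it,x)\big|=2\big|\Im\big(f(h+it,x)-f(h,x)\big)\big|\le 2t\sup_{\tau\in[0,1]}\big|f'(h+i\tau,x)\big|$. On the segment $\{h+i\tau:\tau\in[0,1]\}$ one has $\Re\sqrt{h+i\tau}\le\sqrt{h+1/2}$ and $|h+i\tau|\ge h$; differentiating \eqref{eq:fun} and invoking the denominator lower bound described below, each term of $f'$ is majorized by $C e^{-T}e^{\sqrt{h+1/2}}\big(h^{-3/2}+h^{-2}\big)$, and since $\sqrt h\,e^{\sqrt h}\le e^{h}$ and $e^{\sqrt{h+1/2}}\le e^{h}e^{\sqrt{1/2}}$ for all $h>0$, this is $\le C e^{-T}\frac{e^{h}}{h^{2}}$, as wanted.

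For $t\in[1,2a]$ I would bound $|f(h+it,x)|$ directly. By \eqref{eq:sqrtr}, $|\sqrt{h+it}|\ge\sqrt h$ and $\Re\sqrt{h+it}\le\sqrt{h+t/2}\le\sqrt h+\sqrt t/\sqrt2$, so with the denominator bound $\big|f(h+it,x)\big|\le C e^{-T}\frac{e^{\Re\sqrt{h+it}}}{|\sqrt{h+it}|}\le C e^{-T}\frac{e^{\sqrt h}}{\sqrt h}\,e^{\sqrt t/\sqrt2}$; since $e^{\sqrt t/\sqrt2}=\frac{e^{\sqrt t}}{\sqrt t}\cdot\sqrt t\,e^{-(1-1/\sqrt2)\sqrt t}$ and $\sqrt t\,e^{-(1-1/\sqrt2)\sqrt t}$ is bounded on $t\ge1$, this is $\le C e^{-T}\frac{e^{\sqrt h}}{\sqrt h}\cdot\frac{e^{\sqrt t}}{\sqrt t}$ (the factor $h^{-1/2}$ is a loose over-estimate that only enlarges the target). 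The integral $\int_1^{+\infty}\frac{e^{\sqrt t}}{\sqrt t}e^{-2n\pi t/h}\,\mathrm dt$ converges for every $n\ge1$, so extending the domain from $[1,2a]$ to $[1,+\infty)$ is legitimate.

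The step I expect to be the main obstacle is the uniform lower bound $\big|e^{(\sqrt{h+it}-T)/\alpha}+x\big|\ge c\,\max\{\,x,\ e^{(\Re\sqrt{h+it}-T)/\alpha}\,\}$ for all $t\in[0,2a]$, $x\in\Upsilon$, with $c>0$ independent of $\alpha,\sigma,T$ — equivalently, the assertion that the contour $\{h+it:0\le t\le 2a\}$ stays uniformly clear of the poles of $f$. This is where the hypothesis $x\in\Upsilon$, i.e. $v_{0}=(T+\alpha\log x)^{2}-\alpha^{2}\pi^{2}>M_{0}h$ with $M_{0}$ as in \eqref{DefinitionOfM0} (equivalently \eqref{eq:real}), is essential: using $\Re\sqrt{h+it}\le\sqrt{h+t/2}\le\sqrt{h+a}$ together with $a=2\pi\alpha(T+\alpha\log x)$ one shows $\sqrt{h+a}\le T+\alpha\log x-c_{0}\alpha$ for a fixed $c_{0}>0$, whence $e^{(\Re\sqrt{h+it}-T)/\alpha}\le x e^{-c_{0}}<x$ and $\big|e^{(\sqrt{h+it}-T)/\alpha}+x\big|\ge x\,(1-e^{-c_{0}})$; in the residual small-$t$, large-$h$ regime where this comparison degenerates, the same hypothesis keeps the segment at distance $\ge c\alpha^{2}$ from the poles (the $\sigma$-dependence being absorbed by the term $\sigma^{2}/4$ of $M_{0}$), and the residue bound $\big|\operatorname{Res}_{u_{0}}f\big|=\frac{\sin\alpha\pi}{\pi}\,x^{\alpha}\le1$ then controls $f$ and $f'$ there. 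The three terms in \eqref{DefinitionOfM0} are calibrated precisely so that these comparisons, and the pole-free-segment facts used in the two preceding paragraphs, hold with constants independent of $\alpha\in(0,1)$, $\sigma>0$ and $T$; carrying them out is the technical heart, and is where the auxiliary lemmas are needed. Since every constant produced along the way is absolute, the resulting estimate is uniform in $T,n,x,h,\alpha,\sigma$, as claimed.
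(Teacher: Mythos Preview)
Your proposal follows essentially the same route as the paper: split at $t=1$, use a mean-value (derivative) bound on $[0,1]$, bound $|f(h\pm it,x)|$ directly on $[1,2a]$, and identify the denominator lower bound as the place where the hypothesis $x\in\Upsilon$ enters. The conjugation observation $f(h-it,x)=\overline{f(h+it,x)}$ is a clean way to organize the $[0,1]$-piece; the paper does the equivalent by writing $\phi(t,x)-\phi(-t,x)$ and invoking the mean-value inequality.

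Two points deserve sharpening. First, the denominator bound the paper actually proves is $\Re(\sqrt{h\pm it}-\sqrt{u_0})\le-\sqrt h=-\alpha\sigma$ for all $t\in[0,2a]$ (this is exactly what the third term in \eqref{DefinitionOfM0} is calibrated for), giving $\bigl|e^{(\sqrt{h\pm it}-T)/\alpha}+x\bigr|\ge x(1-e^{-\sigma})$. The separation constant is $\sigma$, not a fixed $c_0$; consequently the factor $\tfrac1{1-e^{-\sigma}}$ appears and its $\sigma$-dependence has to be tracked against the powers of $h=\alpha^2\sigma^2$ when you show $\|\partial_t\phi\|_\infty=\mathcal O(h^{-2}e^h)$ (cf.\ \eqref{eq:phiderbound2}). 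Your sketch glosses over the $1/\alpha$ coming from differentiating the denominator, and your claimed fixed $c_0$ would fail for small $\sigma$.

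Second, your ``residual small-$t$, large-$h$ regime'' with a residue argument is not needed and is somewhat confused: the segment $\{h+it:0\le t\le 2a\}$ is uniformly far from the poles $u_0,u_1$ (since $\Re u_0=v_0>M_0h\gg h$), and the inequality $\Re\sqrt{h\pm it}\le\sqrt{h+a}\le T+\alpha\log x-\alpha\sigma$ holds on the entire segment at once, so there is no residual regime to treat separately.
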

\begin{proof}
We rewrite the first part of the integrand on the left-hand side as follows
\begin{align*}
 &f(h+it,x)-f(h-it,x)\\
=&e^{-T}\frac{\sin(\alpha\pi)}{2\alpha\pi}
\bigg{[}\frac{1}{\sqrt{h+it}}
\frac{xe^{\sqrt{h+it}}}{e^{\frac{1}{\alpha}(\sqrt{h+it}-T)}+x}
-\frac{1}{\sqrt{h-it}}
\frac{xe^{\sqrt{h-it}}}{e^{\frac{1}{\alpha}(\sqrt{h-it}-T)}+x}\bigg{]}\notag.
\end{align*}

Let
\begin{align*}
\varphi(t,x)=&\frac{x}
{e^{\frac{1}{\alpha}(\sqrt{h-it}-T)}+x},\quad t\in[0,2a].
\end{align*}
Since $x\in\Upsilon$, we have from the assumption \eqref{eq:real} that $\sqrt[4]{M_0h}\ge \sqrt{2\alpha\pi}+\sqrt[4]{4h}$, and it follows
$$\sqrt{T+\alpha\log{x}}=\sqrt[4]{v_0+\alpha^2\pi^2}\ge\sqrt[4]{M_0h}
\ge\sqrt{2\alpha\pi}+\sqrt[4]{4h}
\ge\frac{\sqrt{2\alpha\pi}+\sqrt{2\alpha\pi+8\sqrt{h}}}{2}$$
and then $(\sqrt{T+\alpha\log{x}}-\sqrt{\alpha\pi/2})^2\ge \alpha\pi/2+2\sqrt{h}$,
i.e., $\sqrt{2\alpha\pi}\sqrt{T+\alpha\log{x}}+2\sqrt{h}-(T+\alpha\log{x})
\le0$,
which together with \eqref{eq:sqrtr} and \eqref{eq:sqrtru0} implies
\begin{align*}
\Re(\sqrt{h-it}-\sqrt{u_0})
=&\sqrt{\frac{\sqrt{h^2+t^2}+h}{2}}-\Re(\sqrt{u_0})
\le\sqrt{\frac{\sqrt{h^2+4a^2}+h}{2}}-\Re(\sqrt{u_0})\\
\le&\sqrt{a}+\sqrt{h}-\Re(\sqrt{u_0})
=\sqrt{2\alpha\pi}\sqrt{T+\alpha\log{x}}+\sqrt{h}-(T+\alpha\log{x})\\
\le&-\sqrt{h}=-\alpha\sigma,
\end{align*}
and
$|e^{\frac{1}{\alpha}(\sqrt{h-it}-\sqrt{u_0})}|\le e^{-\sigma}$, and
\begin{align*}
\big{|}\varphi(t,x)\big{|}
=\frac{x}{|x+e^{\frac{1}{\alpha}(\sqrt{h-it}-T)}|}=\frac{1}{|e^{\frac{1}{\alpha}(\sqrt{h-it}-\sqrt{u_0})}-1|}
\le\frac{1}{1-e^{-\sigma}}.
\end{align*}
Analogously, we have
$$
\frac{x}{|x+e^{\frac{1}{\alpha}(\sqrt{h+it}-T)}|}=\frac{1}{|e^{\frac{1}{\alpha}(\sqrt{h+it}-\sqrt{u_0})}-1|}
\le\frac{1}{1-e^{-\sigma}}.
$$

Define
$$\phi(t,x)=\frac{1}{\sqrt{h+it}}
\frac{xe^{\sqrt{h+it}}}{e^{\frac{1}{\alpha}(\sqrt{h+it}-T)}+x}=\frac{e^{\sqrt{h+it}}}{\sqrt{h+it}}
\frac{1}{e^{\frac{1}{\alpha}(\sqrt{h+it}-\sqrt{u_0})}-1},\quad t\in[-1,1],
$$
then $\phi(t,x)$ is analytic for $t\in [-1,1]$ and $\partial_t\phi$ is continuous on $[-1,1]\times [0,1]$, and from \cite{Mcleod1965} it obtains
$$
|\phi(t,x)-\phi(-t,x)|\le 2\|\partial_t\phi\|_{\infty}t,\quad t\in [-1,1].
$$
Moreover, it is easy to verify that
\begin{align}\label{eq:phibound2}
 \bigg\|\frac{e^{\sqrt{h+it}}}{\sqrt{h+it}}\bigg\|_{L^\infty[-1,1]}\le\frac{e^{\sqrt{h}+1}}{\sqrt{h}},\quad
\bigg\|\frac{\mathrm{d}}{\mathrm{d}t}\frac{1}{e^{\frac{1}{\alpha}(\sqrt{h+it}-\sqrt{u_0})}-1}\bigg\|_{L^\infty[-1,1]}\le \frac{e^{-\sigma}}{2\alpha\sqrt{h}(1-e^{-\sigma})^2},
\end{align}
 which together with
$h=\sigma^2\alpha^2$ leads to
\begin{align}\label{eq:phiderbound2}
\|\partial_t\phi\|_{\infty}&\le \bigg\|\frac{\mathrm{d}}{\mathrm{d}t}\frac{e^{\sqrt{h+it}}}{\sqrt{h+it}}\bigg\|_{L^\infty[-1,1]}\frac{1}{1-e^{-\sigma}}
+\bigg\|\frac{e^{\sqrt{h+it}}}{\sqrt{h+it}}\bigg\|_{L^\infty[-1,1]}
\frac{e^{-\sigma}}{2\alpha\sqrt{h}(1-e^{-\sigma})^2}\\
&\le \frac{e^{\sqrt{h}+1}(\sqrt{h}+1)}{2h\sqrt{h}}\frac{1}{1-e^{-\sigma}}+\frac{e^{\sqrt{h}+1-\sigma}}{2\alpha h(1-e^{-\sigma})^2}\notag\\
&=\mathcal{O}(1) h^{-2}e^h\notag
\end{align}
with $\mathcal{O}(1)$ independent of $T$, $n$, $x$, $\alpha$, $\sigma$ and $h$, where for the estimate of the first term  in the above inequality \eqref{eq:phiderbound2}, we used $\sqrt{h}=\sigma\alpha<\sigma$, $|e^{\sqrt{h+it}}|\le  e^{\sqrt{h}+1}$ for $0<h,t\le 1$ while $\sqrt{h}e^{\sqrt{h}}\le e^h$ for $h\ge 1$, while for the estimate of the second term, we also used $1-e^{-\sigma}\le \sigma$ if $0<\sigma\le 1$ while $1-e^{-\sigma}>\frac{1}{2}$ and $e^{\sigma}\ge \frac{1}{2}\sigma^2$ if $\sigma>1$.

Consequently, we get by  $\sqrt{t}\le |\sqrt{h\pm it}|\le \sqrt{h}+\sqrt{t}$ for $t\in [1,2a]$ that
\begin{align*}
 &\bigg{|}\int_{0}^{2a}
\big[f(h+it,x)-f(h-it,x)\big]
e^{-\frac{2n\pi}{h}t}\mathrm{d}t\bigg{|}\\
=&e^{-T}\frac{\sin(\alpha\pi)}{2\alpha\pi}\bigg{|}\int_{0}^{1}(\phi(t,x)-\phi(-t,x))e^{-\frac{2n\pi}{h}t}\mathrm{d}t\notag\\
&+\int_1^{2a}
\left(\frac{e^{\sqrt{h+it}}}{\sqrt{h+it}}\varphi(-t,x)-\frac{e^{\sqrt{h-it}}}{\sqrt{h-it}}\varphi(t,x)\right)e^{-\frac{2n\pi}{h}t}\mathrm{d}t\bigg{|}\notag\\
=&\mathcal{O}(1)\bigg(
\frac{e^{h}}{h^{2}}\int_0^1te^{-\frac{2n\pi}{h}t}\mathrm{d}t
+\frac{1}{1-e^{-\sigma}}\int_1^{2a}\frac{e^{\sqrt{t}+\sqrt{h}}}{\sqrt{t}}
e^{-\frac{2n\pi}{h}t}\mathrm{d}t\bigg)e^{-T}\notag\\
=&\mathcal{O}(e^{-T})\bigg(
\frac{e^{h}}{h^2}\int_0^1te^{-\frac{2n\pi}{h}t}\mathrm{d}t
+\frac{e^{\sqrt{h}}}{\sqrt{h}}\int_1^{+\infty}\frac{e^{\sqrt{t}}}{\sqrt{t}}e^{-\frac{2n\pi}{h}t}\mathrm{d}t\bigg)\notag
\end{align*}
if $2a\ge 1$. Otherwise, the integral can be bounded by the first term with $\mathcal{O}(e^{-T})\cdot$ $\left(\frac{e^{h}}{h^2}\int_0^1te^{-\frac{2n\pi}{h}t}\mathrm{d}t\right)$.
These together lead to the desired result.
\end{proof}

\begin{lemma}\label{lemma_inte_circ}
Let $f(u,x)$ be defined in \eqref{eq:fun} with $x\in\Upsilon$. Suppose for some fixed sufficiently large number $N_0>0$ 
that
$$
C^{\pm}_{\rho}=\{z=u_0+\rho e^{i\theta},\theta:0\rightarrow\pm2\pi\},\ \ 0<\rho\le\rho_0=:\frac{1}{2}\min\bigg\{\alpha^2\pi^2,2\pi\alpha\gamma,\frac{h}{N_0}\bigg\},
$$
then for arbitrary $\rho\in(0,\rho_0]$ it holds uniformly that
\begin{align}\label{55555}
\bigg{|}\int_{C^{\pm}_{\rho}}
f(u,x)e^{-i\frac{2n\pi}{h}u}\mathrm{d}u\bigg{|}
=e^{(\rho-a)\frac{2n\pi}{h}}x^{\alpha}\mathcal{O}(1)
\end{align}
as $T$ approaches to $+\infty$, the constant in the  $\mathcal{O}(1)$ \eqref{55555} is independent of $x$, $h$, $\alpha$, $\sigma$, $\rho$ and $T$.
\end{lemma}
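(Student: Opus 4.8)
The plan is to localise everything near the simple pole $u_0$ and to use the \emph{resolved} form of $f$ there. Since $\sqrt{u_0}=T+\alpha\log x-i\alpha\pi$ by \eqref{eq:sqrtru0}, we have $e^{\frac1\alpha(\sqrt{u_0}-T)}=e^{\log x-i\pi}=-x$, and a short manipulation (identical in spirit to the computation for $\overline f$ in the proof of Theorem~\ref{Quadratrue_rat_uniform}) turns \eqref{eq:fun} into
\[
f(u,x)=\frac{\sin(\alpha\pi)}{\alpha\pi}\,\frac{1}{2\sqrt u}\cdot\frac{-x^{\alpha}e^{-i\alpha\pi}\,e^{\sqrt u-\sqrt{u_0}}}{e^{\frac1\alpha(\sqrt u-\sqrt{u_0})}-1}
\]
on the disc $\{|u-u_0|\le\rho_0\}$. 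On this disc $\Re u\ge v_0-\rho_0>0$ (using $v_0>18\alpha^2\pi^2$ from \eqref{eq:real} and $\rho_0\le\tfrac12\alpha^2\pi^2$), so the principal branch of $\sqrt{\cdot}$ is holomorphic and single-valued there; moreover $\rho_0\le\pi\alpha\gamma\le\tfrac12 a$ guarantees $u_0$ is the only pole of $f$ inside. This representation makes the factor $x^{\alpha}$ explicit and displays the simple zero of the denominator at $u=u_0$.

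First I would split off the oscillatory factor. On $C^-_\rho=\{u=u_0+\rho e^{i\theta}\}$ one has $\Im u\le\Im u_0+\rho=\rho-a$, hence for $n\ge1$
\[
\bigl|e^{-i\frac{2n\pi}{h}u}\bigr|=e^{\frac{2n\pi}{h}\Im u}\le e^{(\rho-a)\frac{2n\pi}{h}},
\]
and pulling this out of the integral reduces \eqref{55555} to the estimate $\int_{C^-_\rho}|f(u,x)|\,|\mathrm{d}u|=\int_0^{2\pi}|f(u_0+\rho e^{i\theta},x)|\,\rho\,\mathrm{d}\theta=x^{\alpha}\mathcal{O}(1)$, uniformly in $\rho\in(0,\rho_0]$, $x\in\Upsilon$, $\alpha\in(0,1)$, $\sigma>0$ and $T$ (and then $\rho\to0$ changes nothing since the bound is $\rho$-independent). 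The circle $C^+_\rho$ around $u_1$, paired with $e^{+i\frac{2n\pi}{h}u}$, is the mirror image under conjugation and needs no separate treatment.

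The heart of the matter is that the apparent $\rho^{-1}$ blow-up of $\bigl(e^{(\sqrt u-\sqrt{u_0})/\alpha}-1\bigr)^{-1}$ as $u\to u_0$ must be absorbed by the arclength $\rho\,\mathrm{d}\theta$. I would substitute $w:=\sqrt u-\sqrt{u_0}=\dfrac{\rho e^{i\theta}}{\sqrt u+\sqrt{u_0}}$, so that $\rho=|w|\,|\sqrt u+\sqrt{u_0}|$. The quantitative inputs, all coming from $x\in\Upsilon$ (i.e.\ \eqref{eq:real}) and $\rho\le\rho_0\le\tfrac12\alpha^2\pi^2$, are: (i) $|\sqrt u+\sqrt{u_0}|\ge\Re\sqrt{u_0}=T+\alpha\log x=\sqrt{v_0+\alpha^2\pi^2}>\sqrt{19}\,\alpha\pi$, whence $|w|/\alpha<\pi/(2\sqrt{19})<1$ and $|w|<1$; (ii) consequently $|e^{w/\alpha}-1|\ge c\,|w|/\alpha$ with $c:=\min_{0<|z|\le1}|e^{z}-1|/|z|>0$ universal, and $|e^{\sqrt u-\sqrt{u_0}}|=e^{\Re w}\le e$; (iii) $\rho\le|u_0|/36$ forces $|\sqrt u|\ge\tfrac12|\sqrt{u_0}|$, hence $|\sqrt u+\sqrt{u_0}|/|\sqrt u|\le3$. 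Assembling these in the resolved form,
\[
\bigl|f(u_0+\rho e^{i\theta},x)\bigr|\,\rho\le\frac{\sin(\alpha\pi)}{\alpha\pi}\,x^{\alpha}\,\frac{\rho}{2|\sqrt u|}\cdot\frac{e}{c\,|w|/\alpha}=\frac{\sin(\alpha\pi)}{\pi}\cdot\frac{e}{2c}\cdot\frac{|\sqrt u+\sqrt{u_0}|}{|\sqrt u|}\,x^{\alpha}\le\frac{3e}{2\pi c}\,x^{\alpha},
\]
so the $\theta$-integral is at most $(3e/c)x^{\alpha}$, and multiplying by the exponential factor above yields exactly \eqref{55555} with a constant independent of $x,h,\alpha,\sigma,\rho,n,T$.

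The step I expect to be the main obstacle is the pair of uniform bounds $|w|/\alpha<1$ and $|\sqrt u|$ comparable to $|\sqrt{u_0}|$ on $C^-_\rho$: as $\alpha\to0^+$ both $\rho_0\sim\alpha^2$ and $\Re\sqrt{u_0}$ degenerate, and it is only the hypothesis $v_0>18\alpha^2\pi^2$ built into $\Upsilon$ (together with the precise form of $\rho_0$ from \eqref{DefinitionOfM0}-type constraints) that keeps the ratio $\rho/(\alpha\,\Re\sqrt{u_0})$ bounded. Tracking this interplay between the clustering geometry and the admissible range of $x$ is the delicate part; the cancellation of the pole against the arclength and the elementary estimate $\int_0^{2\pi}\mathrm{d}\theta=2\pi$ that closes the argument are routine. (This is precisely the pole-singularity counterpart of Lemma~\ref{la3}, which handles the branch singularity, and both feed into Theorem~\ref{eq:thm}.)
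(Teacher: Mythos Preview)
Your proposal is correct and follows essentially the same approach as the paper: pull out the factor $e^{(\rho-a)\frac{2n\pi}{h}}$, introduce $w=\sqrt u-\sqrt{u_0}=\rho e^{i\theta}/(\sqrt u+\sqrt{u_0})$, bound the ratio $|w/\alpha|\big/|e^{w/\alpha}-1|$ by a universal constant, and control $|\sqrt u+\sqrt{u_0}|/|\sqrt u|\le 3$. Your treatment of $|e^{\sqrt u-T}|=x^\alpha e^{\Re w}\le x^\alpha e$ (reusing the bound $|w|<1$ already obtained for the denominator) is in fact slightly cleaner than the paper's separate computation showing $\Re(\sqrt u)-T\le\alpha\log x+\tfrac{\alpha\pi}{2}$.
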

\begin{proof}
Without loss of generality, we consider the case of $u\in C^{-}_{\rho}$, and the same argument can be developed for case $u\in C^{+}_{\rho}$.

Setting $u=:v+iw=u_0+\rho e^{i\theta}=v_0+\rho\cos\theta+i(\rho\sin\theta-a)$, the integral in \eqref{55555} can be estimated as
\allowdisplaybreaks[3]
\begin{align}\label{integral_bound_for_cir}
&\left|\int_{C^{-}_{\rho}}
f(u,x)e^{-i\frac{2n\pi}{h}u}\mathrm{d}u\right|\\
\le&e^{-\frac{2n\pi}{h}a}\int_{0}^{2\pi}
\left|f(u_0+\rho e^{i\theta},x)e^{-i\frac{2n\pi}{h}(v_0+\rho\cos{\theta}+i\rho\sin{\theta})}
ie^{i\theta}\right|\rho \mathrm{d}\theta\notag\\
=&e^{(\rho-a)\frac{2n\pi}{h}}\int_{0}^{2\pi}
\left|\rho e^{i\theta}f(u_0+\rho e^{i\theta},x)\right|\mathrm{d}\theta\notag\\
=&e^{(\rho-a)\frac{2n\pi}{h}}\int_0^{2\pi}\left|\frac{e^{\sqrt{u_0+\rho e^{i\theta}}-T}}{2\sqrt{u_0+\rho e^{i\theta}}}\right|
\left|\frac{\rho e^{i\theta}}{e^{\frac{1}{\alpha}(\sqrt{u_0+\rho e^{i\theta}}-\sqrt{u_0})}-1}\right|\mathrm{d}\theta\notag\\
=&\alpha e^{(\rho-a)\frac{2n\pi}{h}}\int_0^{2\pi}\left|\frac{e^{\sqrt{u_0+\rho e^{i\theta}}-T}(\sqrt{u_0+\rho e^{i\theta}}+\sqrt{u_0})}{2\sqrt{u_0+\rho e^{i\theta}}}\right|
\left|\frac{\frac{\rho e^{i\theta}/\alpha}{\sqrt{u_0+\rho e^{i\theta}}+\sqrt{u_0}}}{e^{\frac{\rho e^{i\theta}/\alpha}{\sqrt{u_0+\rho e^{i\theta}}+\sqrt{u_0}}}-1}\right|\mathrm{d}\theta\notag.
\end{align}

We first bound the last term in the integrand of the last identity \eqref{integral_bound_for_cir} by using  $e^z-1\sim z$ for $z\rightarrow 0$ as follows,
 for sufficiently large $N_0$ there is a constant $C_0$ such that
\begin{align}\label{eq:bound_circ1}
\bigg{|}\frac{\rho e^{i\theta}/\alpha}{\sqrt{u_0+\rho e^{i\theta}}+\sqrt{u_0}}\bigg{|}\bigg{/}\bigg{|}
e^{\frac{\rho e^{i\theta}/\alpha}{\sqrt{u_0+\rho e^{i\theta}}+\sqrt{u_0}}}-1\bigg{|}
\le C_0
\end{align}
holds uniformly for $\alpha\in(0,1)$ since $\rho\le\alpha^2\pi^2$ and  $\Re(\sqrt{u_0})=T+\alpha\log{x}\ge \gamma>1$.

Next we estimate $\frac{\sqrt{u_0+\rho e^{i\theta}}+\sqrt{u_0}}{\sqrt{u_0+\rho e^{i\theta}}}$: From  \eqref{eq:real} it holds that $|u_0|\ge |v_0|\ge M_0h\ge2h$, and by $\rho\le\frac{h}{N_0}$ we obtain the following bound
$$
\bigg{|}\frac{u_0}{u}\bigg{|}
=\bigg{|}\frac{u_0}{u_0+\rho e^{i\theta}}\bigg{|}
\le\frac{|u_0|}{|u_0|-|\rho e^{i\theta}|}
\le\frac{|v_0|}{|v_0|-\frac{h}{N_0}}
\le\frac{2h}{2h-\frac{h}{N_0}}
\le 2,
$$
and then
\begin{align}\label{eq:bound_circ2}
\bigg{|}\frac{\sqrt{u_0+\rho e^{i\theta}}+\sqrt{u_0}}{\sqrt{u_0+\rho e^{i\theta}}}\bigg{|}\le 1+\sqrt{\bigg{|}\frac{u_0}{u_0+\rho e^{i\theta}}\bigg{|}}<3.
\end{align}

Finally, we consider $e^{\sqrt{u_0+\rho e^{i\theta}}-T}$: From $v_0>M_0h$ and the assumption \eqref{eq:real}, we have  with $c_0=\frac{\alpha\pi}{2}$ and
$a=2\alpha\pi(T+\alpha\log{x})=2\pi\alpha\sqrt{v_0+\alpha^2\pi^2}$  by  $v_0+\rho\ge v$ that
$$
2\big{(}\sqrt{v_0+\alpha^2\pi^2}+c_0\big{)}^2-v
=2v_0+2\alpha^2\pi^2+2c_0^2+\frac{2c_0a}{\alpha\pi}-v
\ge v_0+2\alpha^2\pi^2+a+2c_0^2-\rho.
$$
While by
$$
\sqrt{v^2+w^2}\le\sqrt{(v_0+\rho)^2+(a+\rho)^2}\le v_0+a+2\rho,
$$
we get from the definition of $\rho$ for sufficiently large $N_0$ that
$$
\sqrt{v^2+w^2}-
2\big{(}\sqrt{v_0+\alpha^2\pi^2}+c_0\big{)}^2+v
\le3\rho-2\alpha^2\pi^2-2c_0^2\le0,
$$
from which it yields
$$\sqrt{\frac{\sqrt{v^2+w^2}+v}{2}}\le\sqrt{v_0+\alpha^2\pi^2}
+\frac{\alpha\pi}{2},$$
$$
\Re(\sqrt{u})-T=\sqrt{\frac{\sqrt{v^2+w^2}+v}{2}}-T\le \sqrt{v_0+\alpha^2\pi^2}-T+\frac{\alpha\pi}{2}
=\alpha\log{x}+\frac{\alpha\pi}{2},
$$
and then
\begin{align}\label{eq:bound_circ3}
\bigg{|}e^{\sqrt{u_0+\rho e^{i\theta}}-T}\bigg{|}=e^{\Re(\sqrt{u})-T}\le x^\alpha e^{\frac{\alpha\pi}{2}}.
\end{align}

Substituting \eqref{eq:bound_circ1}, \eqref{eq:bound_circ2} and \eqref{eq:bound_circ3} into \eqref{integral_bound_for_cir},
we have that
\begin{align*}
&\bigg{|}\int_{C^{-}_{\rho}}
f(u,x)e^{-i\frac{2n\pi}{h}u}\mathrm{d}u\bigg{|}
=e^{(\rho-a)\frac{2n\pi}{h}}x^\alpha \mathcal{O}(1).
\end{align*}
\end{proof}

\bigskip
In the following, we turn to prove the boundedness of
$$
\bigg{|}
\int_{h\pm 2ia}^{+\infty\pm 2ia}f(u,x)e^{\pm i\frac{2n\pi}{h}u}\mathrm{d}u\bigg{|}=e^{-\frac{4n\pi a}{h}}\bigg{|}\int_{h}^{+\infty}f(t\pm 2ia,x)e^{\pm i\frac{2n\pi}{h}t}\mathrm{d}t\bigg{|}.
$$
The key strategy here is to divide the integral interval $[h,+\infty)$ into three
subintervals
$$[h,v_L],\ [v_L,v_R],\ [v_R,+\infty),$$
on which the integrals of $|f(t-2ia,x)|$ will be bounded by $x^{\alpha}\mathcal{O}(1)$, where the dividing points satisfies $h<v_L<v_0<v_R<+\infty$ (see Fig. \ref{locationofhatu}). The proof can be directly applied to the integrals of $|f(t+2ia,x)|$.

We seek two points $u_L=v_L-2ia$ and $u_R=v_R-2ia$ locating on the left and right sides of $u_0-ia$, respectively, such that
\begin{align}\label{bounds of Im(t-2ia)}
-\frac{5}{2}\alpha\pi=\Im(\sqrt{u_L})
\le\Im(\sqrt{u})\le\Im(\sqrt{u_R})
=-\frac{3}{2}\alpha\pi,\hspace{.3cm}u=t-2ia,\ v_L\le t\le v_R.
\end{align}

\begin{figure}[htp]
\centerline{\includegraphics[width=16cm]{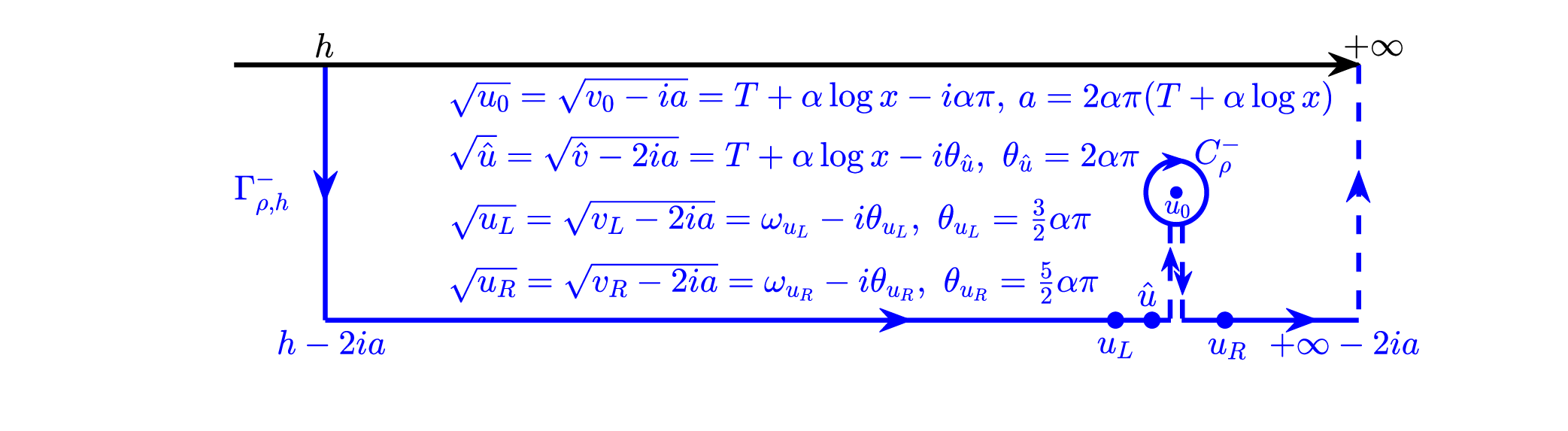}}
\caption{The locations of $u_L$ and $u_R$, which divide the integral path $[h-2ia,+\infty-2ia)$ into three parts $[h-2ia,u_L],\ [u_L,u_R]$ and $[u_R,+\infty-2ia)$
satisfying \eqref{bounds of Im(t-2ia)}, \eqref{monotonicity on h-baru}, \eqref{monotonicity on baru-infty}, \eqref{eq:lowerboundforsqruh} and \eqref{eq:upperboundforsqruh}, where $\hat u$ lies between the points $u_L$ and $u_R$, that is, $v_L=\Re(u_L)<\Re(\hat u)=\hat v<\Re(u_R)=v_R$. Additionally, $\Re(\sqrt{u_0})=\Re(\sqrt{\hat{u}})$.}
\label{locationofhatu}
\end{figure}

The following observations are much important for the choosing of $u_L$ and $u_R$.
Denote $u=t-2ia$ with $t\in[h,+\infty)$, then by \eqref{eq:sqrtr} it follows
\begin{align}\label{presention of sqrt u}
\sqrt{u}=\sqrt{\frac{\sqrt{t^2+4a^2}+t}{2}}-i\sqrt{\frac{2a^2}{\sqrt{t^2+4a^2}+t}}
=:\omega_u-i\theta_u,
\end{align}
which implies that both of its real part $\Re(\sqrt{u})$ and imaginary part $\Im(\sqrt{u})$ are strictly monotonically increasing with respect to $t\in[h,+\infty)$, and $\Re(\sqrt{u})$ is  positive and $\Im(\sqrt{u})$ negative.

At first, we show that
\begin{align}\label{bounds of Im(u0-ia)}
-\frac{5}{2}\alpha\pi<\Im\big(\sqrt{v_0-2ia}\big)
=\Im\big(\sqrt{u_0-ia}\big)<-\frac{3}{2}\alpha\pi.
\end{align}

Set $\hat{u}=\hat{v}-2ia$ such that $\Re(\sqrt{\hat{u}})=\Re(\sqrt{u_0})$, which is equivalent to $\theta_{\hat{u}}=2\alpha\pi$
by $\Re(\sqrt{u_0})=\frac{a}{2\alpha\pi}$ and $\Re(\sqrt{\hat{u}})=\frac{a}{\theta_{\hat{u}}}$ (according to \eqref{presention of sqrt u}).
Then according to the monotonicity of $\Re(\sqrt{u})$ and $\Im(\sqrt{u})$ for $t\in [h,+\infty)$ and $u$ on the line segment $[h-2ia,+\infty-2ia)$ , we have
\begin{align}
&\Re(\sqrt{u}-\sqrt{u_0})<0\hspace{.5cm}\text{ for }\hspace{.5cm}u=t-2ia {\rm \, \,and\,\,} t\in[h,\hat{v}),\label{monotonicity on h-baru}\\
&\Re(\sqrt{u}-\sqrt{u_0})>0\hspace{.5cm}\text{ for }\hspace{.5cm}u=t-2ia  {\rm \, \, and\,\,} t\in(\hat{v},+\infty),\label{monotonicity on baru-infty}
\end{align}
which together with
$$\Re(\sqrt{\hat{u}})=\Re(\sqrt{u_0})=\sqrt{\frac{\sqrt{v_0^2+a^2}+v_0}{2}}
<\sqrt{\frac{\sqrt{v_0^2+4a^2}+v_0}{2}}=\Re(\sqrt{v_0-2ia})$$  
implies that
\begin{align}\label{eq:lowerboundforsqruh}
-\frac{5}{2}\alpha\pi<-2\alpha\pi=\Im\big(\sqrt{\hat u}\big)<
\Im\big(\sqrt{v_0-2ia}\big).
\end{align}

Moreover, we have by \eqref{eq:real} that $v_0=(T+\alpha\log{x})^2-\alpha^2\pi^2>M_0h\ge 18\pi^2\alpha^2$, thus it is easy to verify that
\begin{align}\label{ratio of a v0}
\frac{a}{v_0}
=&\frac{2\alpha\pi\sqrt{v_0+\alpha^2\pi^2}}{v_0}
<\frac{1}{2}.
\end{align}
Furthermore, from \eqref{presention of sqrt u} and \eqref{ratio of a v0} it holds that
\begin{align*}
4-\frac{\Im^2(\sqrt{v_0-2ia})}{\Im^2(\sqrt{u_0})}
=&4-\frac{4\big(\sqrt{v_0^2+a^2}+v_0\big)}{\sqrt{v_0^2+4a^2}+v_0}
=\frac{4\big(\sqrt{v_0^2+4a^2}-\sqrt{v_0^2+a^2}\big)}{\sqrt{v_0^2+4a^2}+v_0}\\
=&\frac{12a^2}{\big(\sqrt{v_0^2+4a^2}+v_0\big)
\big(\sqrt{v_0^2+4a^2}+\sqrt{v_0^2+a^2}\big)}\\
\le&\frac{12a^2}{\big(\sqrt{4a^2+4a^2}+2a\big)
\big(\sqrt{4a^2+4a^2}+\sqrt{4a^2+a^2}\big)}\hspace{1cm}\\
=&\frac{6}{(\sqrt{2}+1)(2\sqrt{2}+\sqrt{5})},
\end{align*}
and then
\begin{align*}
\bigg|\frac{\Im(\sqrt{v_0-2ia})}{\Im(\sqrt{u_0})}\bigg|=\frac{-\Im(\sqrt{v_0-2ia})}{-\Im(\sqrt{u_0})}
\ge \sqrt{4-\frac{6}{(\sqrt{2}+1)(2\sqrt{2}+\sqrt{5})}}>\frac{18}{10},
\end{align*}
which implies that
\begin{align}\label{eq:upperboundforsqruh}
\Im(\sqrt{v_0-2ia})
<\frac{18}{10}\Im(\sqrt{u_0})
=-\frac{18}{10}\alpha\pi<-\frac{3}{2}\alpha\pi.
\end{align}

Inspirited by \eqref{bounds of Im(u0-ia)}, we may choose $u_L:=v_L-2ia$ satisfying $\Im(\sqrt{u_L})=-\frac{5}{2}\alpha\pi$. Consequently we have
from \eqref{presention of sqrt u} and $\Im(\sqrt{u_0})=-\alpha\pi$ that
\begin{align*}
\sqrt{\frac{\sqrt{v_L^2+4a^2}-v_L}{2}}=-\Im(\sqrt{u_L})=-\frac{5}{2}\Im(\sqrt{u_0})
=\frac{5}{2}\sqrt{\frac{\sqrt{v_0^2+a^2}-v_0}{2}},
\end{align*}
and then
\begin{align}\label{bound for su-su0 on h-v1}
\sqrt{\frac{4a^2}{\sqrt{v_L^2+4a^2}+v_L}}
=\frac{5}{2}\sqrt{\frac{a^2}{\sqrt{v_0^2+a^2}+v_0}},\quad
\frac{\Re(\sqrt{u_0})}{\Re(\sqrt{u_L})}
=\frac{\sqrt{\frac{\sqrt{v_0^2+a^2}+v_0}{2}}}{\sqrt{\frac{\sqrt{v_L^2+4a^2}+v_L}{2}}}
=\frac{5}{4}.
\end{align}
Subsequently, we have $u_L=\frac{16}{25}(T+\alpha\log x)^2-\frac{25}{4}\alpha^2\pi^2-2ia$ with $v_L>h$ by the assumption \eqref{eq:real}, and
from \eqref{monotonicity on h-baru} and \eqref{bound for su-su0 on h-v1} we get
\begin{align}\label{real_thefirstsegment}
\Re(\sqrt{u}-\sqrt{u_0})\le
\Re(\sqrt{u_L}-\sqrt{u_0})=-\frac{1}{5}\Re(\sqrt{u_0})
=-\frac{1}{5}(T+\alpha\log{x})\le-\frac{1}{5}
\end{align}
for $u=t-2ia$ and $t\in[h,v_L]$.

Similarly,
by choosing $u_R:=v_R-2ia$ satisfying $\Im(\sqrt{u_R})=-\frac{3}{2}\alpha\pi$, we have $\Re(\sqrt{u_R})=\frac{4}{3}\Re(\sqrt{u_0})$ and $u_R=\frac{16}{9}(T+\alpha\log x)^2-\frac{9}{4}\alpha^2\pi^2-2ia$ which yields
\begin{align}\label{real_thesecondsegment}
\Re(\sqrt{u}-\sqrt{u_0})\ge
\Re(\sqrt{u_R}-\sqrt{u_0})
=\frac{1}{3}\Re(\sqrt{u_0})
=\frac{1}{3}(T+\alpha\log{x})
\ge\frac{1}{3}
\end{align}
for $u=t-2ia$ and $t\in[v_R,+\infty)$.

Thus now we can choose $v_L$ and $v_R$ as the dividing points, which are the real parts of $u_L$ and $u_R$, respectively. Furthermore, $u_L$ and $u_R$ satisfy well the condition \eqref{bounds of Im(t-2ia)}.

\bigskip
\begin{lemma}\label{infty}
Let $f(u,x)$ be defined in \eqref{eq:fun} with $x\in\Upsilon$. Then
\begin{align}\label{bound_path_h-inf}
\bigg{|}
\int_{h\pm 2ia}^{+\infty\pm 2ia}f(u,x)e^{\pm i\frac{2n\pi}{h}u}\mathrm{d}u\bigg{|}
=&e^{-\frac{4n\pi a}{h}}\bigg{|}\int_{h}^{+\infty}f(t\pm 2ia,x)e^{\pm i\frac{2n\pi}{h}t}\mathrm{d}t\bigg{|}\\
=&e^{-\frac{4n\pi a}{h}}x^{\alpha}\mathcal{O}(1)\notag
\end{align}
holds uniformly for $x\in\Upsilon$, and the constant in $\mathcal{O}(1)$ \eqref{bound_path_h-inf} is independent of $x$, $h$, $\alpha$, $\sigma$ and $T$.
\end{lemma}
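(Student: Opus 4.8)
The plan is to reduce the bound in \eqref{bound_path_h-inf} to the single estimate $\int_{h}^{+\infty}\big|f(t\mp 2ia,x)\big|\,\mathrm{d}t = x^{\alpha}\mathcal{O}(1)$ uniformly for $x\in\Upsilon$: once this is available, \eqref{bound_path_h-inf} follows at once from the identity already displayed in the statement together with $|e^{\pm i\frac{2n\pi}{h}t}|=1$ on the real axis. By the reflection symmetry it suffices to handle $u=t-2ia$. The first step is to use the pole factorisation already in force: since $\sqrt{u_0}=T+\alpha\log x-i\alpha\pi$ forces $e^{\frac{1}{\alpha}(\sqrt{u_0}-T)}=-x$, one has $\frac{x}{e^{\frac{1}{\alpha}(\sqrt{u}-T)}+x}=\frac{1}{1-e^{\frac{1}{\alpha}(\sqrt{u}-\sqrt{u_0})}}$, so that
\[
\big|f(t-2ia,x)\big|=\frac{\sin(\alpha\pi)}{\alpha\pi}\cdot\frac{e^{\Re(\sqrt{u})-T}}{2|\sqrt{u}|}\cdot\frac{1}{\big|1-e^{\frac{1}{\alpha}(\sqrt{u}-\sqrt{u_0})}\big|}.
\]

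The second, and decisive, step is the change of variable $\omega=\Re(\sqrt{u})$, which by \eqref{presention of sqrt u} is strictly increasing in $t$ on $[h,+\infty)$. Writing $\sqrt{u}=\omega-i\theta$ with $\omega\theta=a$ and $t=\omega^{2}-a^{2}/\omega^{2}$, one gets $\mathrm{d}t=2\omega(1+(\theta/\omega)^{2})\,\mathrm{d}\omega\le 4\omega\,\mathrm{d}\omega$ (using $\theta<\omega$, which holds because $t>0$) and $|\sqrt{u}|=\sqrt{\omega^{2}+\theta^{2}}\ge\omega$. Combining these two observations collapses $\int_{h}^{+\infty}|f(t-2ia,x)|\,\mathrm{d}t$ into $2\frac{\sin(\alpha\pi)}{\alpha\pi}\int_{\omega_{h}}^{+\infty}e^{\omega-T}g(\omega)\,\mathrm{d}\omega$, where $\omega_{h}=\Re(\sqrt{h-2ia})$ and $g(\omega)=\big|1-e^{\frac{1}{\alpha}(\sqrt{u}-\sqrt{u_0})}\big|^{-1}$; this is a one-dimensional integral with a clean exponential weight. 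Setting $P:=\Re(\sqrt{u_0})=T+\alpha\log x$ (so $x^{\alpha}=e^{P-T}$ and $P\ge\gamma>1$), I would split the $\omega$-range at $\omega=\frac{4}{5}P$ and $\omega=\frac{4}{3}P$, which by the already-established identities $\Re(\sqrt{u_L})=\frac{4}{5}\Re(\sqrt{u_0})$ and $\Re(\sqrt{u_R})=\frac{4}{3}\Re(\sqrt{u_0})$ correspond exactly to $t=v_L$ and $t=v_R$.

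On each of the three pieces $g$ is controlled by the location estimates proved above. On $[\omega_{h},\frac{4}{5}P]$ one has $\Re(\sqrt{u}-\sqrt{u_0})=\omega-P\le-\frac{1}{5}P\le-\frac{1}{5}$, hence $\big|e^{\frac{1}{\alpha}(\sqrt{u}-\sqrt{u_0})}\big|\le e^{-1/5}$ and $g\le(1-e^{-1/5})^{-1}$; on the middle piece $[\frac{4}{5}P,\frac{4}{3}P]$ the bound \eqref{bounds of Im(t-2ia)} gives $\frac{1}{\alpha}\Im(\sqrt{u}-\sqrt{u_0})\in[-\frac{3\pi}{2},-\frac{\pi}{2}]$, so $\Re\big(e^{\frac{1}{\alpha}(\sqrt{u}-\sqrt{u_0})}\big)\le 0$, whence $\big|1-e^{\frac{1}{\alpha}(\sqrt{u}-\sqrt{u_0})}\big|\ge\max\{1,e^{\frac{1}{\alpha}(\omega-P)}\}$ and therefore $g\le\min\{1,e^{-\frac{1}{\alpha}(\omega-P)}\}$; on $[\frac{4}{3}P,+\infty)$ inequality \eqref{real_thesecondsegment} gives $\omega-P\ge\frac{1}{3}P\ge\frac{1}{3}$, hence $g\le(1-e^{-1/3})^{-1}e^{-\frac{1}{\alpha}(\omega-P)}$. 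On the parts where $\omega\le P$ the integral is then $\le e^{P-T}=x^{\alpha}$ up to an absolute constant, while on the parts where $\omega\ge P$ the factor $e^{-\frac{1}{\alpha}(\omega-P)}$ dominates (because $1-\frac{1}{\alpha}<0$) and contributes $\le\kappa\,e^{\frac{P}{3}(1-\frac{1}{\alpha})}x^{\alpha}\le\kappa x^{\alpha}$ with $\kappa=\frac{\alpha}{1-\alpha}$. Summing the three pieces and using $\frac{\sin(\alpha\pi)}{\alpha\pi}\le1$ together with $\frac{\sin(\alpha\pi)}{\alpha\pi}\kappa=\frac{\sin((1-\alpha)\pi)}{(1-\alpha)\pi}\le1$ absorbs the $\alpha\to1$ blow-up of $\kappa$ and leaves $x^{\alpha}$ times an absolute constant, which is the claim; the case $u=t+2ia$ is identical after replacing $\sqrt{u}=\omega+i\theta$, or follows by conjugation. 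The main obstacle is the middle interval $[v_L,v_R]$: there $e^{\Re(\sqrt{u})-T}$ can be as large as $e^{T/3}x^{4\alpha/3}$, far larger than the target $x^{\alpha}$, so no pointwise bound of the integrand suffices; the substitution $\omega=\Re(\sqrt{u})$ is exactly what compresses the $t$-range on which this exponential is near its maximum, and it is its interplay with the decay of $g$ past $\omega=P$ and with the cancellation $\frac{\sin(\alpha\pi)}{\alpha\pi}\kappa\le1$ that makes the bound uniform in $\alpha$, $\sigma$, $T$ and $n$.
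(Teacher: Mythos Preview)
Your argument is correct and takes a genuinely different, more streamlined route than the paper. Both proofs use the same three-piece decomposition at $v_L,v_R$ (equivalently $\omega=\tfrac{4}{5}P,\tfrac{4}{3}P$) and the same angular observation on the middle piece that $\tfrac{1}{\alpha}\Im(\sqrt{u}-\sqrt{u_0})\in[-\tfrac{3\pi}{2},-\tfrac{\pi}{2}]$ forces $\Re\big(e^{\frac{1}{\alpha}(\sqrt{u}-\sqrt{u_0})}\big)\le0$. The difference is in how the exponential weight is handled. The paper stays in the $t$-variable and must first prove, via a somewhat delicate computation, the comparison
\[
\tfrac{2}{9}(\sqrt{t}-\sqrt{v_0})\ \le\ \Re(\sqrt{u}-\sqrt{u_0})\ \le\ \tfrac{2}{9}(\sqrt{t}-\sqrt{v_0})+\sqrt{2}\,\alpha\pi
\]
on the relevant ranges, after which the integrals reduce to $\int e^{c(\sqrt t-\sqrt{v_0})}\,d\sqrt t$. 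Your substitution $\omega=\Re(\sqrt{u})$ sidesteps this: the relations $\omega\theta=a$, $t=\omega^2-a^2/\omega^2$ give $\mathrm{d}t\le 4\omega\,\mathrm{d}\omega$ and $|\sqrt{u}|\ge\omega$, so the factor $\tfrac{1}{2|\sqrt{u}|}\,\mathrm{d}t$ collapses to $\le 2\,\mathrm{d}\omega$, and the weight becomes exactly $e^{\omega-T}$ with no auxiliary inequality needed. The paper's detour via $\sqrt{t}$ buys nothing extra; your variable is simply better adapted to the integrand. One small sharpening worth noting: on the left piece you may just as well keep the stronger bound $g\le(1-e^{-1/(5\alpha)})^{-1}$ as the paper does, but your weaker absolute constant $(1-e^{-1/5})^{-1}$ is already enough and keeps the estimate free of $\alpha$, which is cleaner.
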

\begin{proof}
We only prove the case $f(t-2ia)$ of \eqref{bound_path_h-inf}, and the other can be proved in the same way.

At first, we estimate  the integrand of \eqref{bound_path_h-inf}
\begin{align*}
\big|f(u,x)\big|
=\frac{\sin{(\alpha\pi)}}{2\alpha\pi}
\frac{\big|xe^{\sqrt{u}-T}\big|}
{\big|\sqrt{u}\big|\big|e^{\frac{1}{\alpha}(\sqrt{u}-T)}+x\big|}
=\frac{\sin{(\alpha\pi)}}{2\alpha\pi}
\frac{\big|e^{\sqrt{u}-T}\big|}
{\big|\sqrt{u}\big|\big|e^{\frac{1}{\alpha}(\sqrt{u}-\sqrt{u_0})}-1\big|}
\end{align*}
on the subinterval $[h,v_L]$, where $u=t-2ia$ and $t\in[h,v_L]$.

From $\sqrt{u_0}=T+\alpha\log{x}-i\alpha\pi$, we have
\begin{align}\label{e^{sqrt_u-T}}
\big{|}e^{\sqrt{u}-T}\big{|}
=\big{|}e^{(\sqrt{u_0}-T)+(\sqrt{u}-\sqrt{u_0})}\big{|}
=x^{\alpha}e^{\Re(\sqrt{u}-\sqrt{u_0})},
\end{align}
and the exponent can be estimated by $v_0\ge2a$ \eqref{ratio of a v0} and $v_0>18\alpha^2\pi^2$ \eqref{eq:real}
as follows
\begin{align}\label{boundsfor_realpar_sqru-u0}
\Re(\sqrt{u}-\sqrt{u_0})
=&\sqrt{\frac{\sqrt{t^2+4a^2}+t}{2}}
-\sqrt{\frac{\sqrt{v^2_0+a^2}+v_0}{2}}\notag\\
=&\frac{1}{2}\frac{\sqrt{t^2+4a^2}-\sqrt{v_0^2+a^2}+(t-v_0)}
{\sqrt{\frac{\sqrt{t^2+4a^2}+t}{2}}
+\sqrt{\frac{\sqrt{v_0^2+a^2}+v_0}{2}}}\notag\\
=&\frac{1}{2}\frac{(t-v_0)
\big(\frac{t+v_0}{\sqrt{t^2+4a^2}+\sqrt{v_0^2+a^2}}+1\big)
+\frac{3a^2}{\sqrt{t^2+4a^2}+\sqrt{v_0^2+a^2}}}
{\sqrt{\frac{\sqrt{t^2+4a^2}+t}{2}}
+\sqrt{\frac{\sqrt{v_0^2+a^2}+v_0}{2}}}\\
\le&\frac{\frac{t-v_0}{2}
\big(\frac{t+v_0}{t+2a+v_0+a}+1\big)}
{\sqrt{t+a}+\sqrt{v_0+\frac{a}{2}}}
+\frac{\frac{3a^2}{\sqrt{t^2+4a^2}
+\big(\frac{a^2}{4\alpha^2\pi^2}+\alpha^2\pi^2\big)}}{2(\sqrt{t}+\sqrt{v_0})}\notag\hspace{.5cm}({\rm by}\,t-v_0<0)\\
\le&\frac{t-v_0}{6(\sqrt{t}+\sqrt{v_0})}
\bigg[\frac{t+v_0}{3(t+v_0)}+1\bigg]
+\frac{3a^2}{
\frac{2a^2}{4\alpha^2\pi^2}\sqrt{v_0}}\notag\hspace{.5cm}({\rm by}\,2a<v_0)\\
\le&\frac{2}{9}\big(\sqrt{t}-\sqrt{v_0}\big)
+\sqrt{2}\alpha\pi\notag\hspace{.5cm}({\rm by}\,v_0\ge18\alpha^2\pi^2)
\end{align}
for $u=t-2ia$ and $t\in[h,v_0]$.

Based on the estimations \eqref{real_thefirstsegment}, \eqref{e^{sqrt_u-T}} and \eqref{boundsfor_realpar_sqru-u0} and by noticing $\big|\sqrt{t-2ia}\big|>\sqrt{t}$, the integral of $\big|f(t-2ia,x)\big|$ on the first subinterval $[h,v_L]$ satisfies that
\begin{align}\label{bound for first interval}
\int_{h}^{v_L}\big|f(t-2ia,x)\big|\mathrm{d}t
\le&\frac{x^{\alpha}e^{\sqrt{2}\alpha\pi}\sin{(\alpha\pi)}}{\big(1-e^{-\frac{1}{5\alpha}}\big)\alpha\pi}
\int_{h}^{v_L}\frac{e^{\frac{2}{9}(\sqrt{t}-\sqrt{v_0})}}{2\sqrt{t}}\mathrm{d}t
=x^{\alpha}\mathcal{O}(1)
\end{align}
holds uniformly for $x\in\Upsilon$ by $h<v_L<v_0$. Particularly, the constant in $\mathcal{O}(1)$ is independent of $x$,  $\alpha$, $\sigma$ and $T$.

In order to estimate the integral on the third subinterval $[v_R,+\infty)$, we rewrite the integrand
$|f(u,x)|$ as
\begin{align}\label{estimation of integrand}
|f(u,x)|
=&\frac{\sin{(\alpha\pi)}}{\alpha\pi}
\frac{x^{\alpha}}{2\big|\sqrt{t-2ia}\big|}
\frac{\big|e^{\sqrt{u}-\sqrt{u_0}}\big|}
{\big|e^{\frac{1}{\alpha}(\sqrt{u}-\sqrt{u_0})}\big|
\big{|}e^{-\frac{1}{\alpha}(\sqrt{u}-\sqrt{u_0})}-1\big{|}}\\
=&\frac{\sin{(\alpha\pi)}}{\alpha\pi}
\frac{x^{\alpha}}{2\sqrt[4]{t^2+4a^2}
e^{\frac{1}{\kappa}\Re(\sqrt{u}-\sqrt{u_0})}
\big{|}1-e^{-\frac{1}{\alpha}(\sqrt{u}-\sqrt{u_0})}\big{|}},\notag
\end{align}
where $u=t-2ia$ and $t\in[v_R,+\infty)$.

Analogous to \eqref{boundsfor_realpar_sqru-u0}, by using
$t-v_0>0$ and $\frac{3a^2}{\sqrt{t^2+4a^2}+\sqrt{v_0^2+a^2}}>0$ we have for the exponent $\Re(\sqrt{u}-\sqrt{u_0})$ that
\begin{align}\label{boundsfor_realpar_sqru-u0_v1-inf}
\frac{2}{9}(\sqrt{t}-\sqrt{v_0})\le\Re(\sqrt{u}-\sqrt{u_0}),
\end{align}
where $u=t-2ia$ and $t\in[v_0,+\infty)$.
With the bounds \eqref{real_thesecondsegment} and \eqref{boundsfor_realpar_sqru-u0_v1-inf}, we have
\begin{align*}
\big{|}1-e^{-\frac{1}{\alpha}(\sqrt{u}-\sqrt{u_0})}\big{|}
&\ge 1-e^{-\frac{1}{\alpha}\Re(\sqrt{u}-\sqrt{u_0})}
\ge  1-e^{-\frac{1}{3\alpha}},
\end{align*}
and then
\begin{align}\label{bound for third interval}
\int_{v_R}^{+\infty}\big|f(t-2ia,x)\big|\mathrm{d}t
\le&\frac{x^{\alpha}\sin{(\alpha\pi)}}
{\big(1-e^{-\frac{1}{3\alpha}}\big)\alpha\pi}
\int_{v_R}^{+\infty}
\frac{e^{-\frac{2}{9\kappa}(\sqrt{t}-\sqrt{v_0})}}{2\sqrt{t}}\mathrm{d}t\\
=&\frac{9x^{\alpha}\sin{(\alpha\pi)}e^{-\frac{2}{9\kappa}(\sqrt{v_R}-\sqrt{v_0})}}
{2\left(1-e^{-\frac{1}{3\alpha}}\right)(1-\alpha)\pi}
=x^{\alpha}\mathcal{O}(1)\notag
\end{align}
holds uniformly for $x\in\Upsilon$ by $v_R>v_0$, and the constant in $\mathcal{O}(1)$ is independent of $x$, $h$,  $\alpha$, $\sigma$ and $T$.

Now, we turn to the middle subinterval $[v_L,v_R]$. Since $\Im(\sqrt{u_0})=-\alpha\pi$, it is easy to check by \eqref{bounds of Im(t-2ia)}, \eqref{real_thefirstsegment} and \eqref{real_thesecondsegment} that
\begin{align*}
-\frac{3\pi}{2}\le\frac{1}{\alpha}\Im(\sqrt{u}-\sqrt{u_0})
\le-\frac{\pi}{2},
\hspace{0.5cm}
-\frac{1}{5}\le\Re(\sqrt{u}-\sqrt{u_0})\le\frac{1}{3}
\end{align*}
hold for $u=t-2ia$ and $t\in[v_L,v_R]$,
which by denoting $\theta_d:=\Im(\sqrt{u}-\sqrt{u_0})$ implies that
\begin{align}\label{bound for sqrt-sign1}
\big|e^{\frac{1}{\alpha}(\sqrt{u}-\sqrt{u_0})}-1\big|
=&\sqrt{e^{\frac{2}{\alpha}\Re(\sqrt{u}-\sqrt{u_0})}
-2e^{\frac{1}{\alpha}\Re(\sqrt{u}-\sqrt{u_0})}
\cos{\bigg(\frac{\theta_d}{\alpha}\bigg)}+1}\\
\ge&\sqrt{1+e^{\frac{2}{\alpha}\Re(\sqrt{u}-\sqrt{u_0})}}
\ge\sqrt{1+e^{-\frac{2}{5\alpha}}}>1\notag
\end{align}
for $u=t-2ia$ and $t\in[v_L,\hat v]$.
Similarly, we have
\begin{align}\label{bound for sqrt-sign2}
\big|1-e^{-\frac{1}{\alpha}(\sqrt{u}-\sqrt{u_0})}\big|
\ge\sqrt{1+e^{-\frac{2}{3\alpha}}}>1
\end{align}
for $u=t-2ia$ and $t\in[\hat v,v_R]$.

Thus, according to \eqref{monotonicity on h-baru} and \eqref{monotonicity on baru-infty}
it establishes
\begin{align}\label{bounds middle interval}
\int_{v_L}^{v_R}\big|f(t-2ia,x)\big|\mathrm{d}t
=&\frac{\sin{\alpha\pi}}{\alpha\pi}
\int_{v_L}^{\hat v}\frac{x^{\alpha}e^{\Re(\sqrt{u}-\sqrt{u_0})}}
{2\big|\sqrt{u}\big|\big|e^{\frac{1}{\alpha}(\sqrt{u}-\sqrt{u_0})}-1\big|}\mathrm{d}t\\
&+\frac{\sin{\alpha\pi}}{\alpha\pi}
\int_{\hat v}^{v_R}\frac{x^{\alpha}e^{-\frac{1}{\kappa}\Re(\sqrt{u}-\sqrt{u_0})}}
{2\big|\sqrt{u}\big|
\big|1-e^{-\frac{1}{\alpha}\Re(\sqrt{u}-\sqrt{u_0})}\big|}\mathrm{d}t
\notag\\
\le&\frac{x^{\alpha}e^{\sqrt{2}\alpha\pi}\sin{(\alpha\pi)}}{\alpha\pi}
\int_{v_L}^{\hat v}\frac{e^{\frac{2}{9}(\sqrt{t}-\sqrt{v_0})}}
{2\sqrt{t}}\mathrm{d}t
\hspace{.3cm}(\text{by \eqref{boundsfor_realpar_sqru-u0} {\rm and} \eqref{bound for sqrt-sign1}})\notag\\
&+\frac{x^{\alpha}\sin{(\alpha\pi)}}
{\alpha\pi}
\int_{\hat v}^{v_R}\frac{e^{-\frac{2}{9\kappa}(\sqrt{t}-\sqrt{v_0})}}
{2\sqrt{t}}\mathrm{d}t\notag\hspace{.3cm}(\text{by \eqref{boundsfor_realpar_sqru-u0_v1-inf} {\rm and} \eqref{bound for sqrt-sign2}})\\
=&\frac{9x^{\alpha}e^{\sqrt{2}\alpha\pi}\sin{(\alpha\pi)}}{2\alpha\pi}
\left[e^{\frac{2}{9}(\sqrt{\hat{v}}-\sqrt{v_0})}-e^{\frac{2}{9}(\sqrt{v_L}-\sqrt{v_0})}\right]\notag\\
&+\frac{9x^{\alpha}\sin{(\alpha\pi)}}{2(1-\alpha)\pi}
\left[e^{-\frac{2}{9\kappa}(\sqrt{\hat{v}}-\sqrt{v_0})}-e^{-\frac{2}{9\kappa}(\sqrt{v_R}-\sqrt{v_0})}\right]\notag\\
=&x^{\alpha}\mathcal{O}(1)\notag
\end{align}
holds uniformly for $x\in\Upsilon$, and the constant in $\mathcal{O}(1)$ is independent of $x$,  $\alpha$, $\sigma$ and $T$, where we used in the last step in \eqref{bounds middle interval} the inequalities $v_L<\hat v<v_0<v_R$ and the fact
\begin{align*}
0>\sqrt{\hat v}-\sqrt{v_0}=&\sqrt{(T+\alpha\log{x})^2-4\alpha^2\pi^2}
-\sqrt{(T+\alpha\log{x})^2-\alpha^2\pi^2}\\
=&\frac{-3\alpha^2\pi^2}{\sqrt{(T+\alpha\log{x})^2-4\alpha^2\pi^2}
+\sqrt{(T+\alpha\log{x})^2-\alpha^2\pi^2}}\\
>&-\frac{3}{8}\alpha\pi
\end{align*}
followed from $(T+\alpha\log{x})^2-\alpha^2\pi^2\ge(T+\alpha\log{x})^2-4\alpha^2\pi^2\ge16\alpha^2\pi^2$ by \eqref{eq:real}, and thus $e^{-\frac{2}{9\kappa}(\sqrt{\hat{v}}-\sqrt{v_0})}\le e^{\frac{(1-\alpha)\pi}{12}}$.

Adding \eqref{bound for first interval}, \eqref{bound for third interval} and \eqref{bounds middle interval} all up, it completes the proof of the case of
\begin{align*}
\bigg{|}
\int_{h-2ia}^{+\infty-2ia}f(u,x)e^{-i\frac{2n\pi}{h}u}\mathrm{d}u\bigg{|}
=&e^{-\frac{4n\pi a}{h}}\bigg{|}\int_{h}^{+\infty}f(t-2ia,x)e^{- i\frac{2n\pi}{h}t}\mathrm{d}t\bigg{|}\\
=&e^{-\frac{4n\pi a}{h}}x^{\alpha}\mathcal{O}(1),
\end{align*}
where the constant in the above $\mathcal{O}(1)$ is independent of $x$,  $\alpha$, $\sigma$ and $T$.
\end{proof}

\end{appendix}


\section*{Acknowledgement}
This work was supported by National Science Foundation of China 
(No. 12271528).

\bibliographystyle{siamplain}
\bibliography{references}
\end{document}


\maketitle

\section{A detailed example}

Here we include some equations and theorem-like environments to show
how these are labeled in a supplement and can be referenced from the
main text.
Consider the following equation:
\begin{equation}
  \label{eq:suppa}
  a^2 + b^2 = c^2.
\end{equation}
You can also reference equations such as \cref{eq:matrices,eq:bb} 
from the main article in this supplement.

\lipsum[100-101]

\begin{theorem}
An example theorem.
\end{theorem}

\lipsum[102]
 
\begin{lemma}
An example lemma.
\end{lemma}

\lipsum[103-105]

Here is an example citation: \cite{KoMa14}.

\section[Proof of Thm]{Proof of \cref{thm:bigthm}}
\label{sec:proof}

\lipsum[106-112]

\section{Additional experimental results}
\Cref{tab:foo} shows additional
supporting evidence. 

\begin{table}[htbp]
\footnotesize
  \caption{Example table.}  \label{tab:smfoo}
\begin{center}
  \begin{tabular}{|c|c|c|} \hline
   Species & \bf Mean & \bf Std.~Dev. \\ \hline
    1 & 3.4 & 1.2 \\
    2 & 5.4 & 0.6 \\ \hline
  \end{tabular}
\end{center}
\end{table}

\bibliographystyle{siamplain}
\bibliography{references}